\title{\bf{Interior regularity of area minimizing currents within a $C^{2,\alpha}$-submanifold}}
\author{
Stefano Nardulli\thanks{Universidade Federal do ABC - stefano.nardulli@ufabc.edu.br} \ and Reinaldo Resende\thanks{Carnegie Mellon University - rresende@andrew.cmu.edu}}
\begin{document}

\date{}

\maketitle

\begin{abstract}
Given an area-minimizing integral $m$-current in $\Sigma$, we prove that the Hausdorff dimension of the interior singular set of $T$ cannot exceed $m-2$, provided that $\Sigma$ is an embedded $(m+\bar{n})$-submanifold of $\R^{m+n}$ of class $C^{2,\alpha}$, where $\alpha>0$. This result establishes the complete counterpart, in the arbitrary codimension setting, of the interior regularity theory for area-minimizing integral hypercurrents within a Riemannian manifold of class $C^{2,\alpha}$.
\end{abstract}

\setcounter{tocdepth}{2} 
\tableofcontents

\newpage

\section{Introduction}  

Let $m \geq 2$, $n \geq 1$, $\bar{n} \geq 0$, $\alpha > 0$, and $\Sigma \subset \R^{m+n}$ be a $C^{2,\alpha}$-submanifold of dimension $m + \bar{n}$. In order to state our main result, we need to introduce the notion of regular interior points in the context of integral currents. This notion asserts that the support of $T$ is equal to a $C^{2,\alpha}$-submanifold of $\Sigma$ around points that are far from the boundary. We refer the reader to \Cref{S:Preliminaries} {for} notation and basic definitions.

\begin{defi*}\label{defi:InteriorRegSingPoints}
Let $T$ be an $m$-dimensional integral current in $\Sigma$. 
We say that $p\in\operatorname{spt}(T)\setminus\operatorname{spt}(\partial T)$ is an {\emph{interior regular point of $T$}}, if there exists a neighborhood $U \ni p$ and an $m$-dimensional $C^{2,\alpha}$-submanifold $\mathcal{S} \subset \Sigma$ such that $\operatorname{spt}(T) \cap U = \mathcal{S} \cap U$. The set of such points will be denoted by $\operatorname{Reg}_{\mathrm{i}}(T)$. We call $\operatorname{Sing}_{\mathrm{i}}(T) := \operatorname{spt}(T) \setminus (\operatorname{Reg}_{\mathrm{i}}(T)\cup \operatorname{spt}(\partial T))$ the set of {\emph{interior singular points of $T$}}.
\end{defi*}

We now define the notion of area minimality that we will consider throughout this note.

\begin{defi*}\label{defi:AreaMinimizingCurrents} Let $U\subset\R^{m+n}$ be an open set, we say that an integral $m$-current $T$ is {\emph{area minimizing in $\Sigma\cap U$}} where $\Sigma$ is an embedded $C^{2,\alpha}$ {submanifold} of $\R^{m+n}$, if 
\begin{equation*}
    \|T\|(U)\le \|T+\partial S\|(U),
\end{equation*}
for every integral $(m+1)$-current $S$ with $\operatorname{spt}(S)\subseteq\Sigma\cap U$.
\end{defi*}

Given the definitions above, we are ready to state the main result of this paper.

\begin{thm}\label{T:MAIN}
Let $\alpha>0$, $m \geq 2$, $n \geq 1$, $\bar{n} \geq 0$, and let $\Sigma$ be an embedded $(m+\bar{n})$-submanifold of class $C^{2,\alpha}$ of $\R^{m+n}$. If $T$ {is} an area minimizing integral $m$-current in $\Sigma$, then 
\begin{equation*}
    \mathrm{dim}_{\cH}(\operatorname{Sing}_{\mathrm{i}}(T))\le m-2.
\end{equation*}
\end{thm}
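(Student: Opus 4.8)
The plan is to follow the now-standard Almgren–De Lellis–Spadaro program for interior regularity of area-minimizing currents in arbitrary codimension, but carried out with the reduced regularity hypothesis $\Sigma \in C^{2,\alpha}$ rather than the usual $C^{3,\alpha}$ (or smooth) assumption. The overall architecture is a dimension-reduction argument à la Federer combined with the construction of the \emph{center manifold} and the \emph{normal approximation}, culminating in a frequency-function estimate. As a preliminary reduction, I would note that the statement is local and invariant under the usual normalizations: around a point $p \in \operatorname{Sing}_{\mathrm i}(T)$ one blows up and classifies tangent cones. By Almgren's stratification and the standard splitting-off-a-line argument, it suffices to rule out the existence of an $m$-dimensional stratum of singularities, i.e.\ to show that the set of points where some tangent cone is a flat multiplicity-$Q$ plane but $p$ is not regular has Hausdorff dimension $\le m-2$; all other strata already have dimension $\le m-2$ by the classical stratification theorem, which only needs the interior monotonicity formula and hence only a $C^{2}$ bound on $\Sigma$ (in fact $C^{1,1}$ suffices for an almost-monotonicity formula with the right error terms).

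The heart of the matter is therefore the analysis near a flat singular point with a multiplicity-$Q$ plane as tangent cone. Here I would reproduce, in order: (i) the \emph{higher-integrability} and \emph{Lipschitz approximation} estimates for $T$ in a cylinder, which rely on the first variation identity with the mean curvature of $\Sigma$ appearing as an error — this is where the $C^{2,\alpha}$ regularity enters, since one needs $\Sigma$ to be at least $C^{2}$ to make sense of and bound its second fundamental form, and the Hölder exponent $\alpha$ gives the quantitative decay needed downstream; (ii) the construction of the \emph{center manifold} $\mathcal M$ as a $C^{3,\kappa}$ (here necessarily only $C^{2,\kappa}$, matching $\Sigma$) graph that interpolates the average of the sheets of $T$, together with its \emph{$\mathcal M$-normal approximation} $N$; (iii) the splitting-before-tilting and the key \emph{frequency function} $\mathbf I(r)$ associated to $N$, which one shows is almost-monotone with an error controlled by the regularity of $\mathcal M$ and $\Sigma$; (iv) the final contradiction: if the singular stratum had dimension $> m-2$, an infinitesimal blow-up of $N$ would produce a nontrivial $\mathrm{Dir}$-minimizing multivalued function whose own singular set has dimension $\le m-2$ (Almgren), contradicting the persistence of the large singular stratum under blow-up.

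The main obstacle — and the reason this is a genuine theorem rather than a routine transcription — is that essentially every quantitative estimate in the De Lellis–Spadaro papers is stated and proved under the hypothesis that $\Sigma$ is of class $C^{3,\alpha_0}$ (or $C^{3,\varepsilon}$), because the center manifold construction is designed to be one derivative smoother than $\Sigma$ and the frequency estimate consumes that extra derivative. Dropping to $C^{2,\alpha}$ forces a rebuilding of the center-manifold estimates: one must show that the tilted interpolating functions, their Whitney-type patching, and the resulting $\mathcal M$ still enjoy the $C^{2,\kappa}$ bounds and the crucial \emph{separation} and \emph{splitting} estimates with constants depending only on $\|\Sigma\|_{C^{2,\alpha}}$, and that the error terms in the first variation and in the frequency monotonicity — which previously carried a power of the distance coming from the $C^{3}$ norm — can be reorganized to carry a power $\alpha$ from the $C^{2,\alpha}$ norm instead. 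Concretely, I expect the delicate point to be the estimate on $\|H_{\mathcal M}\|$ and on $\partial_r \mathbf I$, where the mean curvature of the center manifold and of $\Sigma$ must be shown to be Hölder with the correct exponent, and where one must check that the borderline exponent does not destroy the almost-monotonicity (this is the same phenomenon that makes the hypercurrent-in-a-$C^{2,\alpha}$-manifold theory work, and the present paper is precisely the assertion that the arbitrary-codimension machinery survives the same weakening). Once the frequency function is under control, the dimension-reduction and blow-up steps are formally identical to the classical ones and can be quoted.
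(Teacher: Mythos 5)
Your outline correctly reproduces the skeleton of the Almgren--De Lellis--Spadaro program and correctly isolates the delicate point (the frequency function and the mean curvature of the center manifold), but the fix you propose at that point is exactly the one that fails, and the paper's actual contribution is a different construction that circumvents it. You assert that the center manifold is ``here necessarily only $C^{2,\kappa}$, matching $\Sigma$'' and that the error terms in the frequency monotonicity ``can be reorganized to carry a power $\alpha$ from the $C^{2,\alpha}$ norm instead.'' This does not work: the almost-monotonicity of $\mathbf I$ in \cite{DS5} is obtained from inner and outer variations whose error terms involve the \emph{derivative} of the mean curvature $H_{\cM}$ of the center manifold (see \Cref{R:MeanCurvatureHolder}). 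If $\cM$ is only $C^{2,\alpha}$, then $H_{\cM}$ is merely $C^{0,\alpha}$ and $DH_{\cM}$ does not exist, so those error terms cannot even be written down, let alone reorganized; this is precisely why the removal of the $C^{3,\alpha}$ hypothesis had remained open. The analogy with the codimension-one theory is also misleading, since there the regularity in $C^{2,\alpha}$ (or rougher) ambient manifolds is obtained by De Giorgi excess decay and Federer dimension reduction, with no center manifold or frequency function at all.

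The paper's resolution is to build an \emph{external} center manifold $\cM^\ast$ which is genuinely $C^{3,\kappa}$ even though $\Sigma$ is only $C^{2,\alpha}$: the extra derivative is extracted not from $\Sigma$ but from the elliptic system satisfied by the (tangential parts of the) smoothed sheet averages, i.e.\ from the stationarity of $T$ (\Cref{P:PDE}, \Cref{T:ECM}). The price is that $\cM^\ast$ need not lie inside $\Sigma$, which forces two further nontrivial steps absent from your plan: the $\cM^\ast$-normal approximation $N^\ast$ must be modified (via the map $\Xi$ built from a trivialization of $\cup_p (T_p\cM^\ast)^\perp\cap T_{p'}\Sigma$) so that its values remain in $\Sigma$, since the first variation of $T$ only vanishes against vector fields tangent to $\Sigma$; and the final blow-up must be carried out through smooth approximations $\overline{\Sigma}_{k,\varepsilon}$, $\overline{\cM}^\ast_{k,\varepsilon}$ with careful control of the resulting error terms, because the geometric arguments of \cite{DS5} use $\cM\subset\Sigma$ throughout. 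Without the external center manifold (or an equivalent device supplying a $C^{3}$-regular domain for the normal approximation), your step (iii) cannot be completed, so the proposal as written has a genuine gap at its central point.
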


This result provides a complete counterpart in the context of arbitrary codimension for the interior regularity theory concerning area-minimizing integral currents in codimension $1$. Specifically, through an application of Nash's {isometric} embedding theorem (for a comprehensive exposition of Nash's {isometric} embedding theorem, see \cite[Section 4.1]{de2019masterpieces}; for Nash's original proof assuming at least $C^3$ regularity, see \cite{nash1956imbedding}; and for a generalization assuming $C^{k,\beta}$-regularity with $k+\beta > 2$, see \cite{jacobowitz1972implicit}), \Cref{T:MAIN} implies the following.

\begin{thm}\label{T:RiemManifolds}
Let $\alpha>0$, $m \geq 2$, $\bar{n} {>} 0$, and let $(\overline{\Sigma},g)$ be a Riemannian $(m+\bar{n})$-manifold of class $C^{2,\alpha}$. If $T$ {is} an area minimizing integral $m$-current in $\overline{\Sigma}$, then 
\begin{equation*}
    \mathrm{dim}_{\cH_g}(\operatorname{Sing}_{\mathrm{i}}(T))\le m-2.
\end{equation*}
\end{thm}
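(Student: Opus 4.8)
The plan is to reduce \Cref{T:RiemManifolds} to \Cref{T:MAIN} via an isometric embedding, taking care that the embedding respects the regularity class. First I would invoke Nash's embedding theorem in the form valid for $C^{2,\alpha}$ metrics: since $g$ is of class $C^{2,\alpha}$ with $\alpha>0$, we have $2+\alpha>2$, so by the Jacobowitz generalization \cite{jacobowitz1972implicit} of Nash's theorem there exists an isometric embedding $\iota\colon(\overline{\Sigma},g)\hookrightarrow\R^{N}$ for some $N=m+n$ (after relabeling $n$), and $\iota(\overline{\Sigma})=:\Sigma$ is an embedded $(m+\bar n)$-submanifold of $\R^{N}$ of class $C^{2,\alpha}$. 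The point here is that the embedding preserves exactly the regularity we need: the image submanifold is $C^{2,\alpha}$, precisely the hypothesis required by \Cref{T:MAIN}.

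Next I would push the current forward. Set $T':=\iota_{\#}T$. Since $\iota$ is a $C^{2,\alpha}$ (in particular $C^{1}$, bi-Lipschitz on compact sets) embedding, $T'$ is a well-defined integral $m$-current in $\Sigma$, with $\operatorname{spt}(T')=\iota(\operatorname{spt}(T))$ and $\partial T'=\iota_{\#}(\partial T)$, so $\operatorname{spt}(\partial T')=\iota(\operatorname{spt}(\partial T))$. Because $\iota$ is an isometry, it preserves the mass of currents supported in $\overline{\Sigma}$: for any integral current $R$ with $\operatorname{spt}(R)\subseteq\overline{\Sigma}$ one has $\|\iota_{\#}R\|=\iota_{\#}\|R\|$ as measures, hence $\mathbf{M}(\iota_{\#}R)=\mathbf{M}(R)$, and likewise for the localized masses on open sets. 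Competitors correspond under $\iota$: if $S'$ is an integral $(m+1)$-current with $\operatorname{spt}(S')\subseteq\Sigma\cap U'$, then $S:=\iota^{-1}_{\#}S'$ (using that $\iota$ is a diffeomorphism onto $\Sigma$) is an integral $(m+1)$-current with $\operatorname{spt}(S)\subseteq\overline{\Sigma}\cap\iota^{-1}(U')$, and $\iota_{\#}(\partial S)=\partial S'$. Combining these, the area-minimality of $T$ in $\overline{\Sigma}$ transfers to area-minimality of $T'$ in $\Sigma$ in the sense of the second definition above. Therefore \Cref{T:MAIN} applies to $T'$ and yields $\mathrm{dim}_{\cH}(\operatorname{Sing}_{\mathrm{i}}(T'))\le m-2$, where $\mathcal{H}$ is the Euclidean Hausdorff measure on $\R^{N}$.

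Finally I would transfer the dimension bound back. Since $\iota$ is a $C^{2,\alpha}$-embedding, it is a bi-Lipschitz homeomorphism from $\overline{\Sigma}$ (with the geodesic distance $d_g$, or equivalently the restricted Euclidean distance, which are locally bi-Lipschitz equivalent) onto $\Sigma$. Moreover $\iota$ maps interior regular points of $T$ to interior regular points of $T'$ and vice versa: if $\operatorname{spt}(T)\cap V=\mathcal{S}\cap V$ for a $C^{2,\alpha}$-submanifold $\mathcal{S}\subset\overline{\Sigma}$, then $\operatorname{spt}(T')\cap\iota(V)=\iota(\mathcal{S})\cap\iota(V)$ and $\iota(\mathcal{S})$ is a $C^{2,\alpha}$-submanifold of $\Sigma$, since $\iota$ is a $C^{2,\alpha}$-diffeomorphism onto its image; the converse follows by applying the same argument to $\iota^{-1}$. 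Hence $\iota(\operatorname{Reg}_{\mathrm{i}}(T))=\operatorname{Reg}_{\mathrm{i}}(T')$, and consequently $\iota(\operatorname{Sing}_{\mathrm{i}}(T))=\operatorname{Sing}_{\mathrm{i}}(T')$. Bi-Lipschitz maps do not increase Hausdorff dimension, so
\begin{equation*}
    \mathrm{dim}_{\cH_g}(\operatorname{Sing}_{\mathrm{i}}(T))=\mathrm{dim}_{\cH}\big(\iota(\operatorname{Sing}_{\mathrm{i}}(T))\big)=\mathrm{dim}_{\cH}(\operatorname{Sing}_{\mathrm{i}}(T'))\le m-2,
\end{equation*}
which is the claim. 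The main (and essentially only) obstacle is the first step: one must be careful to use a version of Nash's embedding theorem that holds under merely $C^{2,\alpha}$ regularity of the metric and that produces an embedded submanifold of the same regularity class $C^{2,\alpha}$ — this is exactly why the Jacobowitz \cite{jacobowitz1972implicit} refinement (covering $C^{k,\beta}$ with $k+\beta>2$) is cited rather than Nash's original $C^{3}$ statement. All remaining steps are routine functoriality properties of pushforward under a bi-Lipschitz $C^{2,\alpha}$-diffeomorphism together with the scale-invariance of Hausdorff dimension under bi-Lipschitz maps.
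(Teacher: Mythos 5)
Your proposal is correct and is exactly the route the paper takes: the paper derives \Cref{T:RiemManifolds} from \Cref{T:MAIN} by invoking the Jacobowitz $C^{k,\beta}$ ($k+\beta>2$) version of Nash's embedding theorem, and your write-up simply fills in the routine functoriality details (pushforward, mass/minimality preservation, correspondence of regular points, bi-Lipschitz invariance of Hausdorff dimension) that the paper leaves implicit.
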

\begin{rem}
Note that the assumption on \Cref{T:MAIN} is weaker than what we ask for in \Cref{T:RiemManifolds}. Indeed, if we consider $\Sigma$ already embedded in $\R^{m+n}$ and of class $C^{2,\alpha}$ as in \Cref{T:MAIN}, then its Riemannian structure (i.e., the metric tensor) is of class $C^{1,\alpha}$ which is not covered by \Cref{T:RiemManifolds}.
\end{rem}

Nash's {isometric} embedding theorem remains a wide open problem for $C^2$ Riemannian manifolds. Specifically, the challenge lies in ensuring that the embedded Riemannian structure inherits $C^2$ regularity. While it is known that one can achieve $C^{1,\alpha}$-regularity for the {isometrically} embedded Riemannian structure, the question of attaining $C^2$ regularity remains unanswered. With that being said, one would need to approach \Cref{T:RiemManifolds} intrinsically in order to relax the assumption from $C^{2,\alpha}$ to $C^2$. Although the authors expect such a result to be true, rerunning all the machinery presented in \cite{DS2,DS3,DS4,DS5} intrinsically seems quite challenging. Indeed, in the case that the density is $1$, an intrinsic proof of Allard's $\varepsilon$-regularity theorem in $C^2$ Riemannian manifolds will be provided in \cite{stefanomarcos}.

It is well-known that the dimensional bound in \Cref{T:MAIN} is optimal. Indeed, consider the $2$-dimensional integral current $T$ in $\R^4$ induced by ${F:=}\{(z,w)\in \mathbb{C}^2: z^2 = w^3\}$ (which, being a holomorphic subvariety of $\mathbb{C}^2$, is area-minimizing by a famous theorem due to Federer), this current $T$ clearly has $0$ as a singular point, demonstrating the optimality of the dimensional bound in \Cref{T:MAIN}.

\Cref{T:MAIN} was first proved by Almgren in his celebrated work \cite{Alm}, considering $\Sigma$ an already embedded submanifold of some higher dimensional Euclidean space and of class $C^5$. While Almgren's approach was innovative and groundbreaking, it is also intricate and lengthy. In their series of papers \cite{DS1,DS2,DS3,DS4,DS5}, De Lellis and Spadaro introduced new techniques to tackle the regularity theory developed by Almgren. They provided a much simpler and shorter proof of \Cref{T:MAIN}, in which they also achieved an improvement by weakening the assumption on the regularity of $\Sigma$ from $C^5$ to $C^{3,\alpha}$. In this work, we further improve upon their results by extending them to the case where $\Sigma$ is merely $C^{2,\alpha}$.

\subsection*{General framework}

We will follow the approach introduced in \cite{DS1,DS2,DS3,DS4,DS5} to prove \Cref{T:MAIN}. {In their approach, as in \cite{Alm}, the goal is to assume, by way of contradiction, that the area minimizing current $T$ has too large of a singular set and then approximate $T$ with the graph of Lipschitz functions that are ``approximately minimizers for the Dirichlet energy". Therefore, a contradiction argument can be carried out through blow-ups in order to contradict classical regularity theory for Dirichlet minimizers.}

{Although this approach is rather classical in regularity theory for geometric variational problems, in this setting, one highly nontrivial problem immediately arises: we cannot hope to suitably approximate $T$ with the graph of $\mathbb{R}^n$-valued functions due to branching behaviors, see Federer's variety example $F$ above. One of Almgren's groundbreaking ideas was the introduction of the concept of ``multi-valued functions" (or $Q$-valued functions, which are functions with values in the metric space $\mathcal{A}_Q(\mathbb{R}^n)$ given by all the sums of $Q$ Dirac masses of vectors in $\mathbb{R}^n$), written as $f:\Omega\subset \mathbb{R}^m\to \mathcal{A}_Q(\mathbb{R}^n)$, which captures this ``multi-sheeted" behavior of currents like $F$. The regularity properties of $Q$-valued functions that minimize a suitable notion of Dirichlet energy are significantly weaker than those of $\mathbb{R}^n$-valued harmonic functions. In fact, the optimal result states that their singular set has Hausdorff dimension at most $m-2$. This is investigated in \cite{Alm,DS1}.}

{Still at the level of the linear problem, there is a subtlety: for a multi-valued Dirichlet minimizer $f$, taking blow-ups of $f$ does not necessarily preserve singularities in the limit. This adds a significant layer of complexity because, if singularities are lost in the blow-up process, we lose the very structure we aim to analyze and contradict within the argument. This difficulty is overcome by subtracting an ``average" (denote by $\etaa\circ f$) over all the sheets of $f$ and leveraging the almost monotonicity of the so-called frequency function of $f$, see \cite{Alm,DS1}.}

{The space of $Q$-valued functions provides a natural framework, and a regularity theory, of functions whose graphs (precisely, the current induced by its graph, see \cite{DS2}) approximate currents exhibiting branching behavior. This is shown in \cite{DS3}, where the authors exploit the minimality of $T$ to show that such approximations can be chosen as ``approximately minimizers of the Dirichlet energy" while satisfying suitable estimates.} 

{For the nonlinear problem, it is also needed to remove a sort of average of the sheets of $T$ that, in a rough sense, captures its ``regular part", in order to ``preserve" singularities in the limit. This process is highly intricate and provides as an average for $T$ a submanifold $\cM$ in $\Sigma$, known as the \emph{center manifold}, whose construction is detailed in \cite{DS4}. The construction of $\cM$ relies on a refined Whitney decomposition, which crucially depends on the estimates for the multi-valued ``approximately Dirichlet minimizing" approximations of $T$ and the fact that $T$ is an area minimizer. Once the center manifold $\cM$ is established, we can ``subtract" $\cM$ from $T$ and construct the ``average-free" approximations $N$ of $T$. These take the form of ``approximately Dirichlet minimizing" multi-valued functions $N$, which are now defined on $\cM$ and whose graphs serve as approximations of $T$, see \cite{DS4}.}

{To carry out the blow-up argument, the intervals of flattening are introduced, which captures, among other properties, the radii at which $T$ exhibits almost quadratic excess-decay estimates. For each rescaling of $N$ by these radii, there is an associated frequency function suitably defined. A crucial step in the analysis is proving an almost-monotonicity formula for this frequency function where the minimality of $T$ in $\Sigma$ plays a big role. By leveraging this formula, along with all the preceding constructions and the assumption that the singular set of $T$ has Hausdorff dimension strictly greater than $m-2$, we establish a contradiction via these suitable blow-ups of $N$. Specifically, using the minimality of $T$ in $\Sigma$, we obtain at the limit of these blow-ups of $N$ a multi-valued Dirichlet minimizer whose singular set exceeds the Hausdorff dimension bound of $m-2$. This contradicts the regularity theory for the linear problem, thereby completing the argument. This can be found in \cite{DS5}.}

\subsection*{What is new in this work}

The techniques presented in \cite{DS1,DS3} do not depend on the $C^{3,\alpha}$-regularity of $\Sigma$. In \cite{DS3}, the authors do require $\Sigma$ to be at least $C^2$, which is then applicable to the setting considered in the present work.

In contrast, in \cite{DS2,DS4,DS5}, the authors utilize the full strength of the $C^{3,\alpha}$-regularity assumption. Specifically, in \cite{DS4}, they need to construct the so-called center manifold $\cM\subset\Sigma$, which enjoys $C^{3,\alpha}$-regularity due to $\Sigma\in C^{3,\alpha}$, to subsequently leverage this regularity in controlling certain error terms arising in \cite{DS5} after some applications of \cite{DS2}. The control over these error terms, with very specific decay exponents, is delicate and crucial for proceeding with proving the so-called almost-monotonicity of the frequency function and the blow-up argument in \cite{DS5}. Therefore, the question of whether the $C^{3,\alpha}$-regularity assumption could be eliminated has remained open.

In our approach, we weaken the $C^{3,\alpha}$-regularity assumption by constructing what we term the external center manifold $\cM^\ast$, potentially lying outside $\Sigma$ (contrary to $\cM$ which is always contained in $\Sigma$), which culminates in several complexities when approaching \Cref{T:MAIN}. Nonetheless, our method of proof can still establish that $\cM^\ast$ is $C^{3,\alpha}$ (\Cref{T:ECM}) using only the elliptic system induced by the $C^{2,\alpha}$-regularity of $\Sigma$ and the minimality of $T$ in $\Sigma$. 

Another delicate issue in this work is the construction of the so-called $\cM$-normal approximations $N$, as introduced in \cite{DS4}, which serve as an average-free approximation of $T$. These approximations, termed $\cM^\ast$-normal approximations and denoted by $N^\ast$, are defined on $\cM^\ast$ in our setting. This enables us to utilize all the results in \cite{DS2}, as they solely rely on the $C^{3,\alpha}$-regularity of the domain of $N^\ast$. However, it is crucial that $N^\ast$ ranges within $\Sigma$, as we aim to utilize the minimality of $T$ in $\Sigma$ to compare it with the first variation of the multi-valued graph of $N^\ast$, thereby obtaining precise error terms that lead to the almost-monotonicity of the frequency function. In Section \ref{S:NA}, we provide a construction to ensure that the image of $N^\ast$ is trapped within $\Sigma$.

The geometric constructions and the proof of the almost-monotonicity of the frequency function in \cite{DS5} heavily rely also on the fact that $\cM \subset \Sigma$, which is not necessarily true for $\cM^\ast$. Hence, we provide a less geometric approach (compared to \cite{DS5}) in \Cref{S:BU} to carry out a blow-up argument that allow us to demonstrate \Cref{T:MAIN}.

\subsection*{Acknowledgements}

Both authors thank C. De Lellis for useful discussions and for reading a previous version of this note and L. Eduardo Osorio Acevedo for drawing the pictures included in this note. We also thank the anonymous referee for suggestions that considerably improved this manuscript.

R. Resende thanks UFABC (Universidade Federal do ABC) for their hospitality during a research visit funded by CNPq-Research Project: 441922/2023-6. S. Nardulli was supported by the research grants ``Aux\'ilio a Jovem Pesquisador em Centros Emergentes" FAPESP: 2021/05256-0 and ``Bolsa de Produtividade em Pesquisa 1D" CNPq: 312327/2021-8.

\section{Preliminaries and Whitney decomposition}\label{S:Preliminaries}

We will use the following notations: $\ball{p}{r} := \{x\in\R^{m+n}: |x-p|<r\}$, $\pi_0 := \R^m\times\{0\} \subset\R^{m+n}$. For any {affine} $m$-plane $\pi$ and $x\in\pi$, we define $\tball{x}{r}{\pi} := \ball{x}{r}\cap\pi$ and $\cyltilted{x}{r}{\pi} = \tball{x}{r}{\pi}\times \pi^\perp$, when $\pi = \pi_0$ we simply omit it from the notation. We also fix $\cH^s$ for the $s$-dimensional Hausdorff measure in $\R^{m+n}$, $\omega_m := \cH^m(\mathrm{B}_{1}(0))$, and $\mathrm{dim}_\cH(S)$ to be the Hausdorff dimension of a set $S$. 

{Given any vector subspace $\pi\subset\R^{m+n}$, we denote the orthogonal projection onto $\pi$ by $\bp_{\pi}$ and $\bp:= \bp_{\pi_0}$.}

For basics on $Q$-valued functions, we refer the reader to \cite{DS1,DS2}. We refer the reader to \cite{Fed} for classical theory of currents. We will call any element $T$ of $\bI_m\left(U\right)$ an integral $m$-current in $U\subset \R^{m+n}$ and $\|T\|$ denotes its total variation measure. As in \cite{DS4}, let $\bE(T,\ball{p}{r}, \pi)$ to be the spherical excess of $T$ in $\ball{p}{r}$ with respect to $\pi$ { and $\bh\left(T, B, \pi\right)$ be the height of $T$ with respect to $\pi$ in $B$ for any $B\subset\R^{m+n}$. If $\pi = \pi_0$, we drop it from the notation.}


\begin{assump}\label{A:CM}
Let \(T\) be an \(m\)-dimensional integral current of \(\R^{m+n}\) with support in \(\Sigma\cap\oball{6\sqrt{m}}\), we assume that
\begin{align}
    \Sigma\text{ is a }C^{2,\alpha}\text{-submanifold of }&\oball{7\sqrt{m}},\\
    T \text{ is an area minimizing current in } \Sigma\cap&\oball{6\sqrt{m}}, \\
    \partial T\res\oball{6\sqrt{m}} &= 0, \label{A:no_bdr} \\
    \Theta^m(T,0) = Q \in\N&\setminus\{0\}, \label{A:dens=Q} \\
    \frac{\|T\|(\oball{6\sqrt{m}r})}{r^m} - Q \omega_m\left(6\sqrt{m}\right)^m \leq \varepsilon_{cm}^2, \quad \forall r&\leq 1, \label{A:bound_dens} \\
    \bm_0 = \max\{ \bE, c(\Sigma)\} \leq \varepsilon_{cm}^2 &\ll 1,
    \label{A:DiniCondition}
\end{align}
where \(\varepsilon_{cm}\) is a small positive number to be specified later and $\bE:=\bE(T,\oball{6\sqrt{m}})=\bE(T,\oball{6\sqrt{m}},\pi_0)$. {We also assume that, for each $p\in\Sigma$, the submanifold $\Sigma$ is given by the graph of a $C^{2,\alpha}$-map $\Psi_p:T_p\Sigma\cap\oball{7\sqrt{m}}\to T_p\Sigma^{\perp}$} and $c(\Sigma) := \sup_{p\in\Sigma\cap\oball{6\sqrt{m}}}\|D\Psi_p\|_{C^{1,\alpha}}$.
\end{assump}
\begin{rem}\label{A:nearest pt proj}
{Notice that, it is well-known that if $c(\Sigma)$ is smaller than a geometric constant, then the nearest point projection onto $\Sigma$ is well-defined and $C^{1,\alpha}$ in $\oball{6\sqrt{m}}$. Hence, up to further decreasing $\varepsilon_{cm}$, we can assume that we are always working within the domain of the nearest point projection. We refer the reader to \cite[Theorem 3.4]{AamariKimEtAl2019} for an estimate of the normal injectivity radius or the reach of a submanifold of the Euclidean space.}
\end{rem}
\begin{rem}
{We will denote by $\Psi:\R^{m+\bar{n}}\to \R^{n-\bar{n}}$ the $C^{2,\alpha}$-map provided by \cite[Lemma 1.5]{DS4} whose graph we can assume (without loss of generality) to be $\Sigma$ and also 
\begin{equation}\label{Sigma-close-to-pi_0}
    |\R^{m+\bar{n}}\times\{0\} - T_0\Sigma| \leq C\bm_0^{1/2}.
\end{equation}
We mention that \cite[Lemma 1.5]{DS4} is stated for $C^{3,\alpha}$-submanifolds; however, it is straightforward to see that their proof provides the same result in our context.}
\end{rem}

We will say that a constant is a \emph{dimensional constant} when it depends only on \(m\) and \(n\), and \emph{geometric constant} when it depends on \(m,n\) and \(Q\). We always use the notations $C_0$ and $c_0$ for large and small geometric constants, respectively. We recall that from \cite[Lemma 1.6]{DS4}, we have as a consequence of Assumption \ref{A:CM}, that 
\begin{equation}\label{A:projectionQ}
    \left(\bp_{\#} T\right)\res\oball{11\sqrt{m}/2} = Q\a{{\obaseball{11\sqrt{m}/2}}},
\end{equation}
and the height bound estimate that will be helpful to derive more refined bounds. Precisely, there exists positive geometric constants \(C_0, c_0>0\) such that 
\begin{equation}\label{L:1HB}
    \bh\left(T\res\oball{\frac{23\sqrt{m}}{4}}, \ocyl{5\sqrt{m}}\right) \leq C_0\bm_0^{\frac{1}{2m}}, \text{ whenever } \varepsilon_{cm} \leq c_0.
\end{equation}

\subsection{Whitney decomposition, behaviour of the various parameters, and stopping conditions}\label{subsec:Whitney}

\noindent We begin by setting up the Whitney decomposition that will allow us {to} construct the external center manifold. To define this Whitney decomposition, we define the so-called stopping conditions in which we use precise rates of decay for the excess and height of the current $T$ to determine whether or not a $m$-cube should belong to such Whitney decomposition. 

We will fix some notations on $m$-cubes and Whitney decomposition that will be used in this work. For each \(j \in \N\), we denote by \(\sC^j\) the family of closed dyadic cubes \(L\) of \(\pi_0:=\R^{m}\times\{0\}\cong \R^m\) (such identification will be used without further mention) of the form
\begin{equation*}
    \left[a_1, a_1+2 \ell\right] \times \cdots \times\left[a_m, a_m+2 \ell\right] \subset \pi_0,
\end{equation*}
where \(\ell(L):= \ell\) and \(2^{-j} = \ell(L)\) is half of the side-length of the cube, \(a_i \in 2^{1-j} \mathbb{Z}\), and we also set 
\begin{equation}\label{E:size_center_of_cubes}
    -4 \leq a_i \leq a_i+2 \ell(L) \leq 4.
\end{equation} 

We fix the notation for the center \(x_L\) of the cube \(L\), i.e. \(x_L:=\left(a_1+\ell, \ldots, a_m+\ell\right)\). Next we set \(\sC:=\bigcup_{j \in \N} \sC^j\). 

\begin{defi}\label{D:family}
If \(H, L \in \sC\), we say that:
\begin{enumerate}[\upshape (a)]
    \item \(H\) is a \emph{descendant} of \(L\) and \(L\) is an \emph{ancestor} of \(H\), if \(H\subset L\);

    \item \(H\) is a \emph{child} of \(L\) and \(L\) is the \emph{parent} of \(H\), if \(H\subset L\) and \(\ell (H) = \frac{1}{2} \ell (L)\);

    \item \(H\) and \(L\) are \emph{neighbors} if \(\frac{1}{2} \ell (L) \leq \ell (H) \leq 2\ell (L)\) and \(H\cap L \neq \emptyset\).;

    \item \(\{H_i\}_{i=N_0}^{i_0}\) is called \emph{the genealogical tree of \(H\)}, if \(H_{i_0}:=H\), \(H_{N_0}\in\sC^{N_0}\), and each \(H_i\) is the parent of \(H_{i+1}\).
\end{enumerate}
\end{defi}

Let us recall the classical well known notion of Whitney decomposition in which we decompose \([-4,4]^m\) {satisfying} nice interactions among the cubes.

\begin{defi}\label{D:WhitneyDecomposition} 
A \emph{Whitney decomposition of \([-4,4]^m \subset \pi_0\)} consists of a closed set \(\bGamma \subset [-4,4]^m\) and a family \(\sW \subset \sC\) satisfying the following properties:
\begin{enumerate}[\upshape (w1)]
    \item \(\bGamma \cup \bigcup_{L \in \sW} L=[-4,4]^m\), and \(\bGamma\) does not intersect any element of \(\sW\);

    \item the interiors of any pair of distinct cubes \(H,L \in \sW\) are disjoint, i.e., \(\mathring{H}\cap\mathring{L}=\emptyset\);

    \item\label{w3} if \(H, L \in \sW\) have nonempty intersection, then \(\frac{1}{2} \ell\left(H\right) \leq \ell\left(L\right) \leq\) \(2 \ell\left(H\right)\).
\end{enumerate}
\end{defi}

Note that, if \(H,L\in\sW\) have nonempty intersection and \((\bGamma,\sW)\) is a Whitney decomposition, then \(H\) and \(L\) are neighbors. We now show the behaviour of the various parameters that we need to prove the existence of the external center manifold. This hierarchy of the parameters ensures that we can indeed choose such parameters in a way that all the assumptions of the statements of \cref{S:CM} are satisfies. Henceforth, we denote by \(\varepsilon_{la}>0\) and \(\gamma_{la}>0\) the geometric constants \(\varepsilon_1\) and \(\gamma_1\) given by the strong Lipschitz approximation, \cite[Thm 2.4]{DS3}. Note that the author in \cite{DS3} only assume $C^2$ regularity of $\Sigma$. 

\begin{assump}[Hierarchy of the parameters for the external center manifold]\label{A:CM:parameters}
$\gamma_h$ and $\gamma_e$ are two fixed exponents satisfying:
\begin{equation*}
    \gamma_h = 4\gamma_e = \min\left\{\frac{1}{2m},\frac{\gamma_{la}}{100}\right\}.
\end{equation*}
$M_0$ is positive real number and $N_0 = N_0(M_0)$ a natural number under the following assumptions:
\begin{equation*}
    M_0 \geq C_0(m,n,\bar{n},Q) \geq 4, \text{ and } \sqrt{m}M_02^{7-N_0}\leq 1.
\end{equation*}
$C_e$ and $C_h$ are positive real numbers for which, throughout the paper, we always assume:
\begin{itemize}
    \item \(C_e = C_e(\gamma_e,M_0,N_0) \geq 6^mM_0^{-m}2^{(N_0+6)(m+2-2\gamma_e)}\),

    \item \(C_h = C_h(\gamma_h,M_0,N_0,C_e(\gamma_e,M_0,N_0)) \geq C_e^{1/2}C_0\left(M_0 + 2^{N_0(1+\gamma_h)}\right)\).
\end{itemize}
Finally, \(\varepsilon_{cm}=\varepsilon_{cm}(\gamma_e,\gamma_h,M_0,N_0,C_e,C_h)>0\) will be a small parameter.
\end{assump}

This hierarchy is the very same that appears in \cite[Assump 1.8 and 1.9]{DS4}. From now on, we give several properties of our Whitney decomposition given by the stopping cubes conditions. First of all, thanks to property \eqref{A:projectionQ}, given a cube \(H\in\sC\), there exists \(y_L\in\R^n\) such that \(p_L:=(x_L,y_L)\in\spt(T)\). We also define
\begin{equation*}
    r_L:= M_0\sqrt{m}\ell(L).
\end{equation*}
We will usually call \emph{generation of the cube \(L\)} the number \(-\log_2(\ell(L))\in\N \) and \emph{satellite balls} the balls \(\sball{L}\).

\begin{defi}[Stopping cubes conditions]\label{D:stopping_condition}
For \(L\in\sC\), we define the families of cubes \(\sS \subset \sC\) and \(\sW=\sW_e \cup \sW_h \cup \sW_n \subset \sC\) with the convention that 
\begin{equation*}\sS^j=\sS \cap \sC^j, \quad \sW^j=\sW \cap \sC^j, \quad\text{and}\quad \sW_{\square}^j=\sW_{\square} \cap \sC^j, \quad\text{for}\quad \square=h, n, e.\end{equation*}
We set \(\sW^i=\sS^i=\emptyset\) for \(i<N_0\). We define the next generations, i.e., \(j \geq N_0\), inductively: if no ancestor of \(L \in \sC^j\) is in \(\sW\), then
\begin{align}
     &L \in \sW_e^j \text{ if } \mathbf{E}\left(T, \sball{L}\right)>C_e \bm_0\ell(L)^{2-2 \gamma_e};\eqname{(Stops for the excess)}\\
     &L \in \sW_h^j \text{ if } L \notin \sW_e^j \text{ and } \bh\left(T, \sball{L}\right)>C_h \bm_0^{\frac{1}{2m}} \ell(L)^{1+\gamma_h};\eqname{(Stops for the height)}\\
     &L \in \sW_n^j \text{ if } L \notin \sW_e^j \cup \sW_h^j \text{ but it intersects an element of } \sW^{j-1};\eqname{(Stops by neighboring)}\\
     &L \in \sS^j \text{ if } L \notin \sW_e^j \cup \sW_h^j\cup \sW^j_n .\eqname{(Subdividing cubes)}
\end{align}
Lastly, we define what we call the \emph{contact set} as follows
\begin{equation*}
\bGamma:=[-4,4]^m \backslash \bigcup_{L \in \sW} L=\bigcap_{j \geq N_0} \bigcup_{L \in \sS^j} L .
\end{equation*}
\end{defi}

Observe that, if \(j>N_0\) and \(L \in \sS^j \cup \sW^j\), then necessarily its parent belongs to \(\sS^{j-1}\). Otherwise, its parent would not be subdivided, i.e., it would be a stopping cube and then the children must not exist.

{We now define optimal planes for $T$ in cubes of the types defined above as follows.}

\begin{defi*}[Optimal planes]
{We say that the $m$-plane $\pi$ is an \emph{optimal plane for $T$ in $\ball{p}{r}$} if the following conditions hold true:}
\begin{itemize}
    \item {$\pi$ optimizes the excess, i.e., $ \bE(T,\ball{p}{r}) := \min_{\varkappa}\bE(T,\ball{p}{r},\varkappa) = \bE(T,\ball{p}{r},\pi),$}

    \item {$\pi$ optimizes the height among planes that optimize the excess, i.e.,} $${\bh(T,\ball{p}{r},\pi) = \min\{\bh(T,\ball{p}{r},\varkappa): \varkappa\mbox{ optimizes the excess}\}.}$$
\end{itemize}
{Moreover, for $L\in\sS\cup\sW$, we denote $\hat{\pi}_L$ to be an optimal plane for $T$ in $\sball{L}$ and $\pi_L$ to be an $m$-plane contained in $T_{p_L}\Sigma$ that minimizes $|\hat{\pi}_L-\pi_L|$.}
\end{defi*}

{We use an abuse of notation on the balls that we work with, for instance \(\tball{p_L}{r}{\pi_H}\) is not precise since \(p_L\notin \pi_H\) can totally occur, however, we understand it as \(\tball{\bp_{\pi_H}(p_L)}{r}{\pi_H}\). This abuse of notation will make the text more fluid and clear, furthermore it does not affect the mathematics in it, since up to translations everything work properly. }

\subsection{Fine properties of the Whitney decomposition}

All results disclosed in this subsection are not proved in this paper, one can look in \cite[Subsections 4.2 and 4.3]{DS4} for the proofs. We warn the reader that to find out the relation among the constants depending on the parameters \(M_0,N_0\), it is enough to carefully keep track of all choices of constants in \cite[Subsections 4.2 and 4.3]{DS4}. We show that the set of cubes \(\sW\) defined by the stopping conditions and the \(\pi_0\)-contact set \(\bGamma\) (as in Definition \ref{D:stopping_condition}) is a Whitney decomposition in the sense of Definition  \ref{D:WhitneyDecomposition}.

\begin{prop}[Whitney decomposition]\label{P:Whitney_decomp}
Under Assumptions \ref{A:CM} and assuming \(M_0\sqrt{m}2^{7-N_0} \leq 1\), then \((\bGamma, \sW)\) is a Whitney decomposition of \([-4,4]^m\subset \pi_0\). If we additionally have that 
\begin{equation}\label{E:prop_Whitney_decomp:Cstar,varepsilon}
\begin{aligned}
    \varepsilon_{cm} &\leq  \left( \frac{1}{C_0} \left(\sqrt{m}-\frac{1}{2}\right)\right)^m \leq c_0,\\
    \gamma_h +\gamma_e &\leq 1,\\
    C_e = C_e(\gamma_e,M_0,N_0) &\geq 6^mM_0^{-m}2^{(N_0+6)(m+2-2\gamma_e)}, \\
    C_h = C_h(\gamma_h,M_0,N_0,C_e(\gamma_e)) &\geq C_e^{\frac{1}{2}}C_0\left(M_0 + 2^{N_0(1+\gamma_h)}\right),
\end{aligned}
\end{equation}
then we obtain that 
\begin{equation}\label{E:prop_W:emptygenerations}
    \sW^j = \emptyset \text{ for } j\leq N_0+6.
\end{equation}
\end{prop}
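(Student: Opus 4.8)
The plan is to prove \Cref{P:Whitney_decomp} in two stages, first verifying the Whitney-decomposition axioms (w1)--(w3) and then establishing the vacuity of the small generations.

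\textbf{Stage 1: $(\bGamma,\sW)$ is a Whitney decomposition.} Property (w1) is essentially built into \Cref{D:stopping_condition}: by construction $\bGamma = [-4,4]^m\setminus\bigcup_{L\in\sW}L$, so $\bGamma\cup\bigcup_{L\in\sW}L = [-4,4]^m$ tautologically, and the alternative description $\bGamma = \bigcap_{j\ge N_0}\bigcup_{L\in\sS^j}L$ shows $\bGamma$ is closed and disjoint from every $L\in\sW$ (since a point of $\bGamma$ lies in a subdividing cube of every generation, while a cube of $\sW$ is never subdivided). Property (w2), disjointness of interiors, follows from the stopping rule: once a cube stops (enters $\sW$), none of its descendants are examined, so no two cubes in $\sW$ can be nested; and two dyadic cubes with non-nested closures have disjoint interiors. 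The only substantive point is (w3): if $H,L\in\sW$ meet, then $\tfrac12\ell(H)\le\ell(L)\le 2\ell(H)$. I would argue this by contradiction using the neighboring stopping condition $\sW_n$. Suppose, without loss of generality, $\ell(L)\le\tfrac14\ell(H)$, i.e. the generation of $L$ exceeds that of $H$ by at least $2$. Let $L'$ be the ancestor of $L$ one generation below $H$'s generation minus one — more precisely, track the genealogical tree of $L$ up to generation $(\text{gen}(H)+1)$; that ancestor $\widetilde L$ of $L$ is in $\sS$ or $\sW$ at generation $\text{gen}(H)+1$, it intersects $H\in\sW^{\text{gen}(H)}$, hence by the $\sW_n$ rule $\widetilde L\in\sW^{\text{gen}(H)+1}$ — but then $\widetilde L$ is a proper ancestor of $L$ lying in $\sW$, contradicting that $L$ itself stopped (a cube is only examined if no ancestor is in $\sW$). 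This forces the generation gap to be at most $1$, which is exactly (w3). This is the standard argument and I expect it to be the main (though not hard) point of Stage 1; the bookkeeping about ``which ancestor, which generation'' is where one must be careful.

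\textbf{Stage 2: $\sW^j=\emptyset$ for $j\le N_0+6$.} Here the quantitative hypotheses in \eqref{E:prop_Whitney_decomp:Cstar,varepsilon} enter. For $j\le N_0$ this is by convention. For $N_0< j\le N_0+6$ one shows no cube $L\in\sC^j$ can stop for the excess, the height, or by neighboring. The key inputs are: the global excess bound $\bE\le\bm_0$ and the monotonicity/scaling of the excess together with \eqref{A:bound_dens}, giving $\mathbf{E}(T,\sball{L})\le C_0\bm_0$ (up to a dimensional factor coming from passing from the ball $\oball{6\sqrt m}$ down to the satellite ball $\sball{L}$ of radius $r_L = M_0\sqrt m\,\ell(L)$, with $\ell(L)=2^{-j}\ge 2^{-(N_0+6)}$); and the height bound \eqref{L:1HB}, $\bh\big(T,\ocyl{5\sqrt m}\big)\le C_0\bm_0^{1/(2m)}$. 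For the excess: one needs $C_0\bm_0 \le C_e\bm_0\,\ell(L)^{2-2\gamma_e}$, i.e. $C_e \ge C_0\,\ell(L)^{-(2-2\gamma_e)}\le C_0\,2^{(N_0+6)(2-2\gamma_e)}$; the hypothesis $C_e\ge 6^m M_0^{-m}2^{(N_0+6)(m+2-2\gamma_e)}$ is (more than) enough since $m\ge 2$ and one tracks the dimensional constant absorbed into $6^mM_0^{-m}$. For the height: one needs $C_0\bm_0^{1/(2m)}\le C_h\bm_0^{1/(2m)}\ell(L)^{1+\gamma_h}$, i.e. $C_h\ge C_0\,2^{(N_0+6)(1+\gamma_h)}$, which follows from $C_h\ge C_e^{1/2}C_0(M_0+2^{N_0(1+\gamma_h)})$ since $C_e^{1/2}\ge 1$ and $2^{N_0(1+\gamma_h)}\cdot 2^{6(1+\gamma_h)}\le C_e^{1/2}\cdot 2^{N_0(1+\gamma_h)}$ by the lower bound on $C_e$. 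Hence no cube of generation $\le N_0+6$ can stop for excess or for height. Since the very first generation where a neighboring stop can occur is one more than the first generation where an excess/height stop occurs, and the latter is $\ge N_0+7$, we conclude $\sW_n^j=\emptyset$ for $j\le N_0+6$ as well, whence $\sW^j=\emptyset$ there.

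\textbf{Main obstacle.} The genuinely delicate part is making the constants line up in Stage 2 — in particular correctly accounting for every dimensional/geometric factor picked up when restricting $T$ from $\oball{6\sqrt m}$ to a satellite ball and when invoking the height bound \eqref{L:1HB}, so that the stated lower bounds on $C_e$ and $C_h$ (and the smallness $\varepsilon_{cm}\le (\tfrac1{C_0}(\sqrt m-\tfrac12))^m$, needed so that the contact set and the cubes interact correctly at the boundary of $[-4,4]^m$) really do suffice. These are exactly the verifications carried out in \cite[Subsections 4.2 and 4.3]{DS4}, and since none of them uses more than $C^2$-regularity of $\Sigma$, they transfer verbatim to the present setting; I would cite that reference for the routine constant-chasing and only spell out the logical structure above.
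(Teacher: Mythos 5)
Your proposal is correct and follows essentially the same route as the paper, which in fact gives no proof of this proposition at all and defers entirely to \cite[Subsections 4.2 and 4.3]{DS4}; your outline is the standard argument from that reference (the $\sW_n$-rule forcing (w3), and the global excess/height bounds \eqref{A:DiniCondition}, \eqref{L:1HB} ruling out stopping in the first generations), with the constant-chasing likewise delegated to \cite{DS4}. The only nitpick is that $\sW^{N_0}=\emptyset$ is not "by convention" (the convention covers only $j<N_0$), but your Stage 2 computation applies verbatim to $j=N_0$, so nothing is lost.
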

\begin{rem}
Equation \eqref{E:prop_W:emptygenerations} is clearly stating that all the cubes belonging to the first 6 generations are always subdivided into other cubes.
\qed\end{rem}
\begin{rem}
We attract the reader's attention to the following: \(C_h\) depends on \(\gamma_e\) through \(C_e\), it means \(\frac{C_h}{C_e^{1/2}}\) only depends on \(m,n,\gamma_h,M_0,N_0\).
\qed\end{rem}

A piece of essential information about our Whitney decomposition is the behavior we can extract when comparing quantities among different cubes. More specifically, we disclose estimates on how much the optimal planes \(\pi_H, H\in\sW\cup\sS\), deviate from each other and from the basis \(\pi_0\), and we give upper bound for the height function \(\bh\) of the current \(T\) in cylinders with center \(p_L\) and reference plane \(\pi_H\) for possibly different cubes \(H\) and \(L\). Last but not least, we state that the portion of the support of the current \(\spt(T)\) inside those cylinders is contained in satellite balls.

\begin{prop}[Comparisons among cubes and optimal planes]\label{P:tilting_opt_planes}
Under Assumption \ref{A:CM}. Assume that the conclusion of Proposition \ref{P:Whitney_decomp} holds and
\begin{equation}\label{E:prop-TiltOptPlanes:varepsilon}
\begin{aligned}
    \max\left\{\frac{64}{7},\frac{3}{36},\frac{3}{64},\frac{1}{36}\right \} &\leq M_0 \leq \frac{2^{-7+N_0}}{\sqrt{m}},\quad \text{and}\quad \gamma_h+\gamma_e\leq 1, \\
    \varepsilon_{cm} = \varepsilon_{cm}(M_0,N_0,C_h)&\leq c_0\min\left\{\left(M_02^{-N_0}\right)^m,\left(2^{6-N_0}M_0\right)^{m^2}, 2^{-mN_0}, \right.\\
    &\quad \left. 2^{\frac{m^2(6-N_0)}{2}-mN_0}M_0^{\frac{m^2}{2}+m}, {\max}^{-1}\left\{C_h^m, 2^{\frac{m^2(N_0-6)}{2}}M_0^{\frac{m^2}{2}}\right\}\right \}, 
\end{aligned}    
\end{equation}
then we get that
\begin{equation}\label{P:CubesOptPlanes:balls}
    \sball{H} \subset \sball{L} \subset \oball{5\sqrt{m}} \text{ for all } H, L\in\sW\cup\sS \text{ with } H\subset L.
\end{equation}
Assume further that 
\begin{equation*}
    C_e = C_e(M_0,N_0) \geq \left(2^{-6+N_0}6M_0^{-1}\right)^{\frac{m}{2}},\text{ and }
    C_h = C_h(M_0,N_0) \geq C_0 \max\left \{2^{N_0}, \left(2^{N_0-6}M_0^{-1}\right)^{\frac{m}{2}}\right\}.
\end{equation*}
If \(H,L\in\sW\cup\sS\), and either (a) \(H\subset L\), or (b) \(H\cap L\neq\emptyset\) and \(2^{-1}\ell(L)\leq\ell(H)\leq \ell(L)\), then
\begin{enumerate}[\upshape (i)]
    \item \(|\hat{\pi}_H-\pi_H| \leq C_e \bm_0^{\frac{1}{2}} \ell(L)^{1-\gamma_e}\);
    
    \item \(|\pi_H-\pi_L| \leq C_e \bm_0^{\frac{1}{2}} \ell(L)^{1-\gamma_e}\);
    
    \item \(|\pi_H-\pi_0| \leq C_e \bm_0^{\frac{1}{2}}\);

    \item \(\bh(T,\cyltilted{p_H}{36r_H}{\pi_0}) \leq C_h\bm_0^{\frac{1}{2m}}\ell(H)\) and \(\spt(T)\cap\cyltilted{p_H}{36r_H}{\pi_0}\subset\sball{H}\);

    \item \(\bh(T,\cyltilted{p_L}{36r_L}{\pi}) \leq C_h\bm_0^{\frac{1}{2m}}\ell(L)^{1+\gamma_h}\) and \(\spt(T)\cap\cyltilted{p_L}{36r_L}{\pi}\subset\sball{L}\) for $\pi = \pi_H,\hat{\pi}_H$.
\end{enumerate}
\end{prop}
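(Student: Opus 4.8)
The plan is to follow \cite[Subsections 4.2--4.3]{DS4} and run a single induction on the generation $j=-\log_2\ell(L)$, establishing simultaneously the ball nesting $\sball{H}\subset\sball{L}\subset\oball{5\sqrt m}$ (for $H\subset L$), the excess decay $\mathbf{E}(T,\sball{L})\le C_e\bm_0\ell(L)^{2-2\gamma_e}$ for every $L\in\sW\cup\sS$, the tilting bounds (i)--(iii), and the height bounds (iv)--(v). The only genuinely analytic inputs are the strong Lipschitz approximation \cite[Thm 2.4]{DS3} --- which needs merely $\Sigma\in C^2$ --- and the height estimate \eqref{L:1HB}; everything else is combinatorics of the Whitney decomposition together with the bookkeeping dictated by \Cref{A:CM:parameters}. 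In particular nothing here is sensitive to the regularity of $\Sigma$ beyond $C^2$, which is exactly why the proposition carries over verbatim to the present $C^{2,\alpha}$ setting.

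\textbf{Two workhorses, and (i).} First I would record the \emph{plane comparison lemma}: whenever $\mathbf{B}\subset\mathbf{B}'$ are two satellite balls of nested cubes (or of comparable neighbours), then $|\pi_{\mathbf{B}}-\pi_{\mathbf{B}'}|^2\le C_0\,\mathbf{E}(T,\mathbf{B},\pi_{\mathbf{B}'})\le C_0(\rho'/\rho)^m\,\mathbf{E}(T,\mathbf{B}')$, where $\rho,\rho'$ are the radii; the first inequality is the quantitative uniqueness of the excess-minimising plane, a consequence of the density lower bound $\Theta^m(T,\cdot)\ge 1$, and the second is the trivial monotonicity of $\int|\vec T-\vec\pi_{\mathbf{B}'}|^2\,d\|T\|$ in the domain. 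Applied on a single ball with the plane of the $\pi_H$-Lipschitz approximation of $T$ in place of $\pi_{\mathbf{B}'}$, the same mechanism gives $|\hat\pi_H-\pi_H|^2\le C_0\,\mathbf{E}(T,\sball{H})$, since the average slope of that approximation is controlled by $\mathbf{E}^{1/2}$ via \cite[Thm 2.4]{DS3}. Second, the \emph{parent trick}: $\mathbf{E}(T,\sball{L})\le C_e\bm_0\ell(L)^{2-2\gamma_e}$ holds directly whenever $L\in\sS\cup\sW_h\cup\sW_n$ (such cubes did not stop for the excess), and for $L\in\sW_e$ one uses that its parent lies in $\sS$, that $\sball{L}\subset\sball{\mathrm{parent}}$, and that monotonicity degrades the parent's bound only by a bounded factor; the analogous trick applied to the height gives $\bh(T,\sball{L})\le 2^{1+\gamma_h}C_h\bm_0^{1/2m}\ell(L)^{1+\gamma_h}$ for all $L\in\sW\cup\sS$. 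Combining these two facts immediately yields (i): $|\hat\pi_H-\pi_H|^2\le C_0\,\mathbf{E}(T,\sball{H})\le C_0\,C_e\bm_0\ell(H)^{2-2\gamma_e}\le\big(C_e\bm_0^{1/2}\ell(L)^{1-\gamma_e}\big)^2$, the dimensional constant being absorbed into $C_e$ (this is one role of the listed lower bounds on $C_e$, together with $M_0\ge C_0$).

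\textbf{Nesting, then (ii)--(v).} I would then prove the nesting by induction: for $H\subsetneq L$ with both in $\sW\cup\sS$, every ancestor of $H$ was subdivided, so $L\in\sS$ and the whole chain $H=H_{i_0}\subset\cdots\subset H_{N_0}$ lies in $\sS$; each inclusion $\sball{H_{i+1}}\subset\sball{H_i}$ follows from $|p_{H_{i+1}}-p_{H_i}|\le 32\,r_{H_i}$, obtained from the trivial horizontal displacement bound plus the height bound of the previous paragraph, valid once $\varepsilon_{cm}$ is small relative to $M_0$ and $C_h$ (one of the listed smallness conditions), while a direct estimate on a generation-$N_0$ cube using \eqref{E:size_center_of_cubes}, $M_0\sqrt m\,2^{7-N_0}\le 1$ and \eqref{L:1HB} gives the outer inclusion $\sball{H_{N_0}}\subset\oball{5\sqrt m}$. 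With this in hand, (ii) for $H\subset L$ follows by telescoping, $|\pi_H-\pi_L|\le\sum_i|\pi_{H_{i+1}}-\pi_{H_i}|\le C_0(C_e\bm_0)^{1/2}\sum_i\ell(H_i)^{1-\gamma_e}\le C_0(C_e\bm_0)^{1/2}\ell(L)^{1-\gamma_e}$, the geometric series converging because $\gamma_e<1$, and again absorbing $C_0$ into $C_e$; (iii) is the same sum continued up to generation $N_0+6$ --- where \eqref{E:prop_W:emptygenerations} forces the relevant ancestor into $\sS$ --- together with the base estimate $|\pi_{H_{N_0+6}}-\pi_0|^2\le C_0\,\mathbf{E}(T,\sball{H_{N_0+6}},\pi_0)\le C_0C_e\bm_0$, which is precisely where the first listed lower bound on $C_e$ (with its $(N_0+6)$-exponent) is used, via the crude estimate $\mathbf{E}(T,\sball{H_{N_0+6}},\pi_0)\le C_0(2^{N_0}/M_0)^m\,\bm_0$ coming from \Cref{A:CM} and monotonicity. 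The neighbour case (b) reduces to (a) by passing through a common ancestor of size $\le 4\ell(L)$. Finally, for (iv)--(v): once (iii) makes $\pi_H,\hat\pi_H$ close to $\pi_0$, the containment $\spt(T)\cap\cyltilted{p_H}{36r_H}{\pi}\subset\sball{H}$ follows from \eqref{L:1HB} together with $64r_H>36r_H$, after which the cylindrical height is at most $\bh(T,\sball{H})$ and switching the reference plane from $\pi_H$ to $\pi_0$ costs an extra $|\pi_H-\pi_0|\cdot 64r_H\le C_e\bm_0^{1/2}\cdot 64M_0\sqrt m\,\ell(H)\le C_h\bm_0^{1/2m}\ell(H)$, using $\bm_0^{1/2}\le\bm_0^{1/2m}$ and the listed lower bound on $C_h$.

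\textbf{Main obstacle.} The main difficulty is not any single estimate but keeping the induction closed: $C_e$ and $C_h$ must be chosen large enough to swallow every dimensional factor picked up in the parent tricks, the telescoping sums, the base-of-tree estimate and the change-of-plane step, while $\varepsilon_{cm}$ must then be small relative to all of them --- which is exactly what forces the rigid hierarchy of \Cref{A:CM:parameters} and the long list of smallness conditions on $\varepsilon_{cm}$ in the statement. The one genuinely substantive lemma underneath everything is the quantitative uniqueness of the excess-minimising plane, $|\pi_{\mathbf{B}}-\pi_{\mathbf{B}'}|^2\le C_0\,\mathbf{E}(T,\mathbf{B},\pi_{\mathbf{B}'})$; granting that and \cite[Thm 2.4]{DS3}, the remainder is the bookkeeping above.
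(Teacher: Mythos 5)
Your outline reproduces, in essentially the same form, the argument of \cite[Subsections 4.2--4.3]{DS4} (induction on generations, quantitative uniqueness of the excess-minimizing plane, parent trick for the upper bounds of \cref{C:tilting_opt_planes}, telescoping along the genealogical tree, and change of reference plane for the cylindrical height bounds), which is precisely the proof the paper points to: the present paper does not reprove this proposition but defers verbatim to that reference, noting only that the inputs require no more than $C^2$ regularity of $\Sigma$. So the proposal is correct and follows the same route, modulo the routine bookkeeping of absorbing geometric constants into $C_e$ and $C_h$ that both you and the paper explicitly leave to a careful tracking of the constants in \cite{DS4}.
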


We exhibit upper bounds for the excess and height of stopping cubes. We recall that the stopping conditions give only lower bounds to the excess and height.

\begin{cor}[Upper bounds for stopping cubes]\label{C:tilting_opt_planes}
Provided the conclusions of \cref{P:tilting_opt_planes} hold. For every \(L\in\sW\), we have that
\begin{align}
    C_e\bm_0\ell(L)^{2-2\gamma_e} < \bEs{L} \leq 2^{m+2-2\gamma_e}C_e\bm_0\ell(L)^{2-2\gamma_e},\label{E:upper_bound_stopping_cubes:excess}\\
    C_h\bm_0^{\frac{1}{2m}}\ell(L)^{1+\gamma_h} < \bHs{L} \leq 2^{1+\gamma_h}\left(C_h + M_0\sqrt{m}C_e\right)\bm_0^{\frac{1}{2m}}\ell(L)^{1+\gamma_h}.\label{E:upper_bound_stopping_cubes:height}
\end{align}
\end{cor}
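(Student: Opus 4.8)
The strategy is to combine the stopping conditions from Definition \ref{D:stopping_condition} with the comparison estimates of Proposition \ref{P:tilting_opt_planes}, applied to the parent of a stopping cube. Let $L\in\sW$, say $L\in\sC^j$, and let $J$ be its parent, so $\ell(J)=2\ell(L)$ and, by the observation following Definition \ref{D:stopping_condition}, $J\in\sS^{j-1}$ (its parent is subdivided, hence $J$ itself is not a stopping cube). The lower bounds in \eqref{E:upper_bound_stopping_cubes:excess} and \eqref{E:upper_bound_stopping_cubes:height} are immediate: if $L\in\sW_e$ this is exactly the stopping inequality for the excess, while if $L\in\sW_h\cup\sW_n$ we need to check that the excess lower bound still propagates. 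Here I would use that $L\notin\sW_e$ fails to give it directly, so instead the lower bound for the excess on cubes in $\sW_h\cup\sW_n$ should be read off from Proposition \ref{P:tilting_opt_planes}(i)--(iii) together with the definition of $C_e$; more precisely, one uses that a cube in $\sW_n$ neighbors a cube of the previous generation that did stop, and transfers the relevant lower bound via the tilting estimates. (In De Lellis--Spadaro this is essentially \cite[Prop.\ 4.4]{DS4}, and the same bookkeeping applies verbatim since it only uses Proposition \ref{P:tilting_opt_planes}, which we have.) For the height, the lower bound is the stopping inequality when $L\in\sW_h$, and for $L\in\sW_e\cup\sW_n$ it again follows from the height comparison (iv)--(v) applied to $L$ and its stopped neighbor/ancestor.

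For the upper bounds, the point is that $J$, the parent, is \emph{not} a stopping cube, so it violated neither the excess nor the height stopping inequality:
\begin{equation*}
    \bE(T,\sball{J})\leq C_e\bm_0\,\ell(J)^{2-2\gamma_e},\qquad
    \bh(T,\sball{J})\leq C_h\bm_0^{\frac{1}{2m}}\,\ell(J)^{1+\gamma_h}.
\end{equation*}
Now I would use the monotonicity/inclusion of satellite balls from \eqref{P:CubesOptPlanes:balls}, namely $\sball{L}\subset\sball{J}$ since $L\subset J$, to bound $\bEs{L}$ by $\bE(T,\sball{J})$ up to the volume ratio $(\ell(J)/\ell(L))^{m}=2^{m}$ coming from the normalization of the excess by the $m$-dimensional measure of the ball; this produces the factor $2^{m}$, and substituting $\ell(J)=2\ell(L)$ gives the remaining $2^{2-2\gamma_e}$, hence the constant $2^{m+2-2\gamma_e}C_e$. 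For the height one argues similarly: $\bHs{L}$ is controlled by $\bh(T,\sball{J})\leq C_h\bm_0^{1/2m}\ell(J)^{1+\gamma_h}=2^{1+\gamma_h}C_h\bm_0^{1/2m}\ell(L)^{1+\gamma_h}$, but one must account for the fact that the height of $T$ in the satellite ball of $L$ is measured with respect to $\pi_L$ rather than $\pi_J$; changing the reference plane costs an extra term controlled by $|\pi_L-\pi_J|\cdot r_L \lesssim C_e\bm_0^{1/2}\ell(L)^{1-\gamma_e}\cdot M_0\sqrt{m}\,\ell(L)$ by Proposition \ref{P:tilting_opt_planes}(ii), which is absorbed (for $\bm_0$ small, using $\bm_0^{1/2}\leq\bm_0^{1/2m}$ and $1-\gamma_e\geq\gamma_h$) into the term $M_0\sqrt{m}\,C_e\bm_0^{1/2m}\ell(L)^{1+\gamma_h}$, yielding the stated constant $2^{1+\gamma_h}(C_h+M_0\sqrt{m}\,C_e)$.

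The main obstacle I anticipate is the careful treatment of the cubes in $\sW_h\cup\sW_n$ (for the excess lower bound) and $\sW_e\cup\sW_n$ (for the height lower bound), where the defining inequality does not directly supply the claimed bound and one must genuinely invoke the tilting/height comparisons of Proposition \ref{P:tilting_opt_planes} together with the hierarchy of constants in Assumption \ref{A:CM:parameters} to see that the constants $C_e$, $C_h$ have been chosen large enough to make the transfer work; keeping track of which power of $\bm_0$ and which power of $\ell(L)$ appears at each step, and checking that the change-of-plane errors are subordinate, is the delicate bookkeeping. None of this uses any regularity of $\Sigma$ beyond what is already built into Proposition \ref{P:tilting_opt_planes}, so the argument is identical to \cite[Subsection 4.3]{DS4}.
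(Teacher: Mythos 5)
The paper does not prove this corollary itself --- it defers to \cite[Subsections 4.2 and 4.3]{DS4} --- so the comparison is with that reference, and your derivation of the two \emph{upper} bounds coincides with it: the parent $J\in\sS^{j-1}$ of $L$ is not a stopping cube, hence satisfies the reversed stopping inequalities; the inclusion $\sball{L}\subset\sball{J}$ from \eqref{P:CubesOptPlanes:balls} together with the normalization ratio $(r_J/r_L)^m=2^m$ and $\ell(J)=2\ell(L)$ yields the constant $2^{m+2-2\gamma_e}C_e$, and the change of reference plane from $\pi_J$ to $\pi_L$, controlled by \cref{P:tilting_opt_planes}(ii) and absorbed using $\gamma_e+\gamma_h\le1$ and $\bm_0^{1/2}\le\bm_0^{1/2m}$, yields $2^{1+\gamma_h}\left(C_h+M_0\sqrt{m}C_e\right)$. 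This part is correct.

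The gap is in your treatment of the lower bounds. You propose to establish the excess lower bound for $L\in\sW_h\cup\sW_n$ (and the height lower bound for $L\in\sW_e\cup\sW_n$) by ``transferring'' the stopping inequality from a stopped neighbor or ancestor via the tilting estimates. This cannot work: a cube $L\in\sW_h^j$ satisfies, by \cref{D:stopping_condition}, $L\notin\sW_e^j$, i.e. $\bE(T,\sball{L})\le C_e\bm_0\,\ell(L)^{2-2\gamma_e}$, and since $\bm_0\le\bm_0^{1/2}$ this is incompatible with the strict inequality $\bE(T,\sball{L})>C_e\bm_0^{1/2}\ell(L)^{2-2\gamma_e}$, so no comparison estimate can recover it. Independently of this, a lower bound on the excess or height over the larger ball $\sball{H}$ of a stopped neighbor of the previous generation gives no information on the smaller ball $\sball{L}$: neither quantity admits a lower bound under shrinking of the ball, since the mass responsible for the large excess or height may sit outside $\sball{L}$. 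The correct reading is that the two strict lower bounds are nothing but the stopping inequalities themselves and hold only on the respective families --- the first for $L\in\sW_e$, the second for $L\in\sW_h$ --- which is how they are stated and used in \cite{DS4}; the quantifier ``for every $L\in\sW$'' pertains to the upper bounds. You should also not reproduce without comment the mismatch between the threshold $C_e\bm_0\,\ell(L)^{2-2\gamma_e}$ in the stopping condition and the factor $\bm_0^{1/2}$ appearing in \eqref{E:upper_bound_stopping_cubes:excess}: one of the two normalizations is a typo, and the constants must be chased with a consistent choice.
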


The Constancy Lemma \cite[4.1.17]{Fed} is used to prove that the property \eqref{A:projectionQ} also holds for tilted cylinders. 

\begin{lemma}[Full projection property on tilted planes]\label{L:ConstancyLemmaTilted}
Under Assumptions \ref{A:CM}, assume the conclusions of \cref{P:tilting_opt_planes} hold. We additionally assume that 
\begin{equation}\label{E:L:ConstancyLemmaTilted:varepsilon}
    \varepsilon_{cm} = \varepsilon_{cm}(M_0,N_0,C_e)  \leq  \frac{C_0}{(C_e)^{m}} .
\end{equation}
Let \(H,L\in\sW\cup\sS\) such that either (a) \(H\subset L\), or (b) \(H\cap L\neq\emptyset\) and \(2^{-1}\ell(L)\leq \ell(H) \leq \ell(L)\). Then, for $\pi = \pi_H, \hat{\pi}_H$, we have that
\begin{equation*}{\bp_{\pi}}_{\#}\left(T\res\cyltilted{p_L}{32r_L}{\pi}\right) = Q\a{\tball{p_L}{32r_L}{\pi}}.\end{equation*}
\end{lemma}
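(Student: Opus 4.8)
The plan is to deduce the conclusion from the Constancy Lemma \cite[4.1.17]{Fed} in exactly the way \cite[Lemma 1.6]{DS2} was used to obtain \eqref{A:projectionQ}, the only additional work being to check that the hypotheses of that lemma hold on the tilted cylinder $\cyltilted{p_L}{32r_L}{\pi}$ for $\pi = \pi_H,\hat\pi_H$. First I would fix such $H,L$ satisfying (a) or (b), and observe that by \eqref{P:CubesOptPlanes:balls} we have $\sball{L}\subset\oball{5\sqrt m}$, so in particular $\cyltilted{p_L}{32r_L}{\pi}$ is well inside $\oball{6\sqrt m}$ once $r_L = M_0\sqrt m \ell(L) \le $ a suitable multiple of $2^{-N_0}$, which is guaranteed by the constraint $M_0\sqrt m 2^{7-N_0}\le 1$. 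Hence $\partial T = 0$ in this cylinder by \eqref{A:no_bdr}, and therefore $\bp_{\pi\#}(T\res\cyltilted{p_L}{32r_L}{\pi})$ is an integral $m$-current in $\tball{p_L}{32r_L}{\pi}$ with no boundary inside that ball.

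Next I would invoke the Constancy Lemma: a boundaryless integer-multiplicity $m$-current supported in an $m$-dimensional ball and whose push-forward is absolutely continuous with respect to $\cH^m\res\tball{p_L}{32r_L}{\pi}$ must equal $k\a{\tball{p_L}{32r_L}{\pi}}$ for some $k\in\Z$. The absolute-continuity/representation hypothesis is exactly what item (v) of \cref{P:tilting_opt_planes} buys us: it says $\spt(T)\cap\cyltilted{p_L}{36r_L}{\pi}\subset\sball{L}$, and combined with the height bound $\bh(T,\cyltilted{p_L}{36r_L}{\pi})\le C_h\bm_0^{1/(2m)}\ell(L)^{1+\gamma_h}$, the slab containing $\spt(T)$ over $\tball{p_L}{32r_L}{\pi}$ has thickness $\ll r_L$ once $\varepsilon_{cm}$ is small (this is where \eqref{E:L:ConstancyLemmaTilted:varepsilon} enters, via the dependence of the excess on $C_e$), so over the smaller ball $\tball{p_L}{32r_L}{\pi}$ the current $T$ is, after the standard slicing argument, a genuine Lipschitz multigraph and its projection is a constant multiple of $\a{\tball{p_L}{32r_L}{\pi}}$. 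It remains only to identify the multiplicity. I would compute the density $\Theta^m$ of $\bp_{\pi\#}(T\res\cyltilted{p_L}{32r_L}{\pi})$ at an interior point: by the estimate \eqref{A:bound_dens} on the mass ratio, together with the smallness of the excess $\bEs{L}$ (which controls the tilt of the tangent planes of $T$ and hence the Jacobian of $\bp_\pi$ away from $1$ by \cref{P:tilting_opt_planes}(i)--(iii)), the mass of $T$ in $\cyltilted{p_L}{32r_L}{\pi}$ is within $o(1)$ of $Q\omega_m(32r_L)^m$; since the push-forward multiplicity is an integer, it must be exactly $Q$. Positivity (so $Q$ and not $-Q$) follows from the orientation convention, e.g. from \eqref{A:dens=Q} transported along the Whitney tree, or simply from the fact that $Q\ge 1$ and the push-forward has nonnegative density as a limit of the untilted case \eqref{A:projectionQ}.

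The main obstacle I anticipate is not conceptual but bookkeeping: making the passage from the height/excess bounds on the cylinder of radius $36 r_L$ to the constancy conclusion on the cylinder of radius $32 r_L$ fully rigorous, i.e. running the slicing argument that shows the push-forward is absolutely continuous with the correct constant. One must be careful that $\spt(\partial(T\res\cyltilted{p_L}{32r_L}{\pi}))$ lies in the lateral boundary $\partial\tball{p_L}{32r_L}{\pi}\times\pi^\perp$ — which it does, because $\partial T = 0$ in $\oball{6\sqrt m}$ and slicing by the cylinder only produces boundary on the lateral part — so that the Constancy Lemma applies on the open ball $\tball{p_L}{32r_L}{\pi}$. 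The inclusion $\spt(T)\cap\cyltilted{p_L}{36r_L}{\pi}\subset\sball{L}$ from (v) is precisely what guarantees nothing escapes through the top or bottom of the cylinder of radius $32 r_L$. The threshold $\varepsilon_{cm}\le C_0 (C_e)^{-m}$ in \eqref{E:L:ConstancyLemmaTilted:varepsilon} is exactly the quantitative smallness needed so that the product of the excess bound $C_e\bm_0^{1/2}\ell(L)^{2-2\gamma_e}$ with the relevant geometric factors stays below the threshold at which the multigraph structure and the density computation go through; I would make the dependence explicit by tracing through \cref{C:tilting_opt_planes}.
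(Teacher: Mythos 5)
Your overall route is the one the paper itself relies on: the paper does not prove this lemma but defers to \cite[Subsections 4.2 and 4.3]{DS4}, and the argument there is precisely ``apply the Constancy Lemma after checking that $\partial\left(T\res\cyltilted{p_L}{32r_L}{\pi}\right)$ projects into the lateral boundary,'' which is exactly what you extract from item (v) of \cref{P:tilting_opt_planes} together with \eqref{A:no_bdr}. That half of your write-up is sound.

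The genuine gap is in the identification of the constant $k$ once the Constancy Lemma yields ${\bp_{\pi}}_{\#}\left(T\res\cyltilted{p_L}{32r_L}{\pi}\right)=k\a{\tball{p_L}{32r_L}{\pi}}$. You propose a mass count: the mass of $T$ in the cylinder is close to $Q\omega_m(32r_L)^m$, hence $k=Q$ by integrality. But a projection is mass non-increasing, so even granting that estimate you only obtain $|k|\,\omega_m(32r_L)^m=\mathbf{M}\left(k\a{\tball{p_L}{32r_L}{\pi}}\right)\leq \|T\|\left(\cyltilted{p_L}{32r_L}{\pi}\right)\leq (Q+o(1))\,\omega_m(32r_L)^m$, i.e.\ $|k|\leq Q$; nothing here excludes $k\in\{0,\pm1,\dots,\pm(Q-1)\}$, and the appeals to ``the orientation convention'' or to density ``transported along the Whitney tree'' are not arguments. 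To salvage the mass count you would need the reverse inequality $k\,\omega_m(32r_L)^m=\int\langle\vec{T},\vec{\pi}\rangle\,d\|T\|\geq \|T\|\left(\cyltilted{p_L}{32r_L}{\pi}\right)-\tfrac12\int|\vec{T}-\vec{\pi}|^2\,d\|T\|$ \emph{together with} a lower bound $\|T\|\left(\cyltilted{p_L}{32r_L}{\pi}\right)\geq (Q-o(1))\,\omega_m(32r_L)^m$; the latter must come from \eqref{A:projectionQ} applied to an untilted cylinder squeezed inside the tilted one, not from \eqref{A:bound_dens}, which controls mass only in balls centered at the origin and says nothing about a cylinder around $p_L$. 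The standard (and cleaner) way, which is what \cite{DS4} does, avoids mass altogether: interpolate affinely between $\bp_{\pi_0}$ and $\bp_{\pi}$; since $|\pi-\pi_0|\leq C_e\bm_0^{1/2}$ and $\spt(T)\cap\cyltilted{p_L}{36r_L}{\pi}$ lies in a slab of thickness $\ll r_L$ by \cref{P:tilting_opt_planes}(v), the homotopy formula shows the two pushforwards differ by a boundary on a slightly smaller ball, so the constancy constant equals the one in \eqref{A:projectionQ}, namely $Q$, sign included. You should replace the density computation by this homotopy argument, or else supply both the missing mass lower bound and the tilt estimate explicitly.
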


\section{Existence of the external center manifold}\label{S:CM}

We start fixing a convolution kernel $\varrho\in C^{\infty}_{{c}}(\obaseball{1})$ which is radial and satisfies $\int\varrho =1$ and $\int |x|^2\varrho(x)\mathrm{d}x = 0$. For $t>0$, we define $\varrho_t$ as $\varrho_t(x) := t^{-m}\varrho(x/t)$. We will also always assume that $\vartheta \in C_c^{\infty}([-\frac{17}{16}, \frac{17}{16}],[0,1])$ {and is identically equal to $1$ in $[-1,1]^m$.}

\subsection{Lipschitz approximations on tilted cylinders and interpolating functions}

\begin{defi}[$\pi$-approximations]
Let $L\in\sS\cup\sW$ and $\pi$ an arbitrary $m$-dimensional plane. If $T\res\cyltilted{p_L}{32r_L}{\pi}$ is under the assumptions of \cite[Thm 2.4]{DS3} in the cylinder $\cyltilted{p_L}{32r_L}{\pi}$, then the $Q$-valued map $f_L : \tball{p_L}{8r_L}{\pi} \to \cA_{Q}(\pi^{\perp})$ given by \cite[Thm 2.4]{DS3} is called \emph{$\pi$-approximation of $T$ in $\cyltilted{p_L}{8r_L}{\pi}$}. 
\end{defi}

\begin{defi}[Smoothed average]
The single valued map $\hat{h}_L: \tball{p_L}{7r_L}{\pi} \to \pi^{\perp}$ given by $\hat{h}_L:= (\etaa\circ f_L)\ast \varrho_{\ell(L)}$ is called \emph{smoothed average of the $\pi$-approximation}.
\end{defi}

\begin{prop}[Existence of interpolating functions]
Assume the conclusions of Proposition \ref{P:tilting_opt_planes} holds true. Then, we have
\begin{enumerate}[\upshape (i)]
    \item For $\pi\in\{\pi_H,\hat{\pi}_H\}$, we have that $(\bp_{\pi})_{\#}T\res\cyltilted{p_L}{32r_L}{\pi} = Q\a{\tball{p_L}{32r_L}{\pi}}$ and $T$ satisfies the assumptions of \cite[Theorem 2.4]{DS3}.
\end{enumerate}
Furthermore, let $f_{HL}$ be the $\pi_H$-approximation of $T$ in $\cyltilted{p_L}{8r_L}{\pi_H}$ and $\hat{h}_{HL}:=(\etaa\circ f_{HL})\ast\varrho_{\ell(L)}$ it smoothed average. Set $\bar{h}_{HL}:=\bp_{T_{p_H}\Sigma}(\hat{h}_{HL})$ {which takes values on $\pi_H^{\perp}\cap T_{p_H}\Sigma$}, and $h_{HL}:= (\bar{h}_{HL}, \Psi_{p_H}\circ\bar{h}_{HL})$. We then have
\begin{enumerate}[\upshape (ii)]
    \item there is a smooth function $g^\ast_{HL}:\tball{p_L}{4r_L}{\pi_0}\to\pi_0^{\perp}$ such that $\bG_{g^\ast_{HL}} = \bG_{\hat{h}_{HL}}\res\cyltilted{p_L}{4r_L}{\pi_0}$, 

    \item there is a smooth function $g_{HL}:\tball{p_L}{4r_L}{\pi_0}\to\pi_0^{\perp}$ such that $\bG_{g_{HL}} = \bG_{h_{HL}}\res\cyltilted{p_L}{4r_L}{\pi_0}$.
\end{enumerate}
\end{prop}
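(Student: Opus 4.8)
The plan is to establish the four assertions by combining the Lipschitz approximation theorem of \cite[Thm 2.4]{DS3} with the tilting estimates of \Cref{P:tilting_opt_planes} and the full projection property of \Cref{L:ConstancyLemmaTilted}. First I would verify item (i): the identity $(\bp_{\pi})_\#(T\res\cyltilted{p_L}{32r_L}{\pi}) = Q\a{\tball{p_L}{32r_L}{\pi}}$ for $\pi\in\{\pi_H,\hat\pi_H\}$ is exactly the conclusion of \Cref{L:ConstancyLemmaTilted}, so the only work is to check that the structural hypotheses of \cite[Thm 2.4]{DS3} are met in the cylinder $\cyltilted{p_L}{32r_L}{\pi}$. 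Those hypotheses are: zero boundary in the cylinder (which follows from \eqref{A:no_bdr} once the cylinder is shown to sit inside $\oball{6\sqrt m}$, using \eqref{P:CubesOptPlanes:balls}), smallness of the excess $\bE(T,\cyltilted{p_L}{32r_L}{\pi})$, and smallness of $c(\Sigma)$ relative to the excess. The excess smallness is obtained by comparing $\bE$ in the tilted cylinder to the planar excess over the satellite ball $\sball{L}$: items (ii) and (iii) of \Cref{P:tilting_opt_planes} control $|\pi-\pi_0|$ by $C_e\bm_0^{1/2}$, and the excess in $\sball{L}$ is bounded by $C_e\bm_0\ell(L)^{2-2\gamma_e}$ for stopping cubes (and by the analogous non-stopping bound for subdividing cubes), so after tilting we still get an excess that is $\lesssim \bm_0^{1-\delta}$, hence $\le \varepsilon_{la}$ once $\varepsilon_{cm}$ is small. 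The height control needed to fit inside the cylinder comes from items (iv)--(v) of \Cref{P:tilting_opt_planes}.

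Next I would construct $\hat h_{HL}$ and $h_{HL}$ and prove the graphicality statements (ii) and (iii). Having item (i), the $\pi_H$-approximation $f_{HL}$ exists on $\tball{p_L}{8r_L}{\pi_H}$ with the quantitative Lipschitz and $L^\infty$ bounds of \cite[Thm 2.4]{DS3}; convolving $\etaa\circ f_{HL}$ with $\varrho_{\ell(L)}$ produces a smooth single-valued $\hat h_{HL}$ on a slightly smaller ball $\tball{p_L}{7r_L}{\pi_H}$ whose $C^0$ norm inherits the height bound $\lesssim \bm_0^{1/2m}\ell(L)$ and whose gradient is $\lesssim \bm_0^{1/2(1-\gamma_e)}$-small. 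The point of (ii) and (iii) is to re-express the graphs $\bG_{\hat h_{HL}}$ and $\bG_{h_{HL}}$, which are naturally parametrized over subsets of the tilted plane $\pi_H$ (resp. over $T_{p_H}\Sigma$), as graphs over the reference plane $\pi_0$. This is a standard change-of-coordinates argument: since $|\pi_H-\pi_0|\le C_e\bm_0^{1/2}$ and the maps have small gradient, the orthogonal projection $\bp_{\pi_0}$ restricted to these graphs is a diffeomorphism onto its image, which contains $\tball{p_L}{4r_L}{\pi_0}$ (here one uses a size/containment check, much as in the derivation of \eqref{P:CubesOptPlanes:balls}), and the inverse composed with the vertical projection yields the desired $g^\ast_{HL}$, resp. $g_{HL}$; smoothness is preserved since all the maps involved are smooth. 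For $h_{HL}=(\bar h_{HL},\Psi_{p_H}\circ\bar h_{HL})$ one additionally uses that $\Psi_{p_H}\in C^{2,\alpha}$ (from \eqref{A:DiniCondition}), so $h_{HL}$ is at least $C^{2,\alpha}$, which is enough for the reparametrization.

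I expect the main obstacle to be bookkeeping the parameter constraints: one must check that the ambient excess and the Dini/Hölder norm $c(\Sigma)$ are simultaneously small enough — in the right ratio — for \cite[Thm 2.4]{DS3} to apply in every tilted cylinder $\cyltilted{p_L}{32r_L}{\pi}$ uniformly over $H,L\in\sW\cup\sS$, and that all the cylinders and satellite balls nest inside $\oball{6\sqrt m}$ so that \eqref{A:no_bdr} and the minimality hypothesis transfer. This is precisely where the hierarchy in \Cref{A:CM:parameters} and the explicit $\varepsilon_{cm}$-thresholds in \eqref{E:prop-TiltOptPlanes:varepsilon} and \eqref{E:L:ConstancyLemmaTilted:varepsilon} are used; since this is the $C^{2,\alpha}$ setting, one must be careful that \cite[Thm 2.4]{DS3} only needs $\Sigma\in C^2$ (as emphasized in the text) so no extra regularity of $\Sigma$ enters here. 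The geometric reparametrization in (ii)--(iii) is routine once the gradient smallness is in hand, so the real content is (i) together with the containment/smallness verifications.
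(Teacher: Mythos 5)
Your proposal is correct and follows essentially the route the paper (implicitly, by deferring to the machinery of \cite[Subsections 4.2--4.3 and Appendix B]{DS4}) intends: item (i) is the conclusion of \Cref{L:ConstancyLemmaTilted} plus a verification of the hypotheses of \cite[Thm 2.4]{DS3} via the containments, tilting, excess and height bounds of \Cref{P:tilting_opt_planes} and \Cref{C:tilting_opt_planes}, while (ii)--(iii) are the standard reparametrization over $\pi_0$ using $|\pi_H-\pi_0|\le C_e\bm_0^{1/2}$ and the small Lipschitz constant of the approximations. Your observation that $g_{HL}$ is only $C^{2,\alpha}$ (not smooth, since $\Psi_{p_H}\in C^{2,\alpha}$) is in fact the correct reading, consistent with the paper's own remark after \Cref{L:CjEstimates}.
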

\begin{rem}
Note that $\bar{h}_{HL}$'s definition includes a composition with a function depending on $\Sigma$, however the function $\bp_{T_{p_H}\Sigma}$ is fixed for each $H$, i.e., it is a fixed linear function. Hence, $\bar{h}_{HL}$ does not depend on the $C^{2,\alpha}$-regularity of $\Sigma$.
\end{rem}

\begin{defi}[Glued interpolation]
{For each $j$, we define $\sP^j:= \sS^j\cup (\cup_{i= N_0}^j\sW^i)$ and }set $\vartheta_L(x) := \vartheta(\frac{x-x_L}{\ell(L)})$ and define, for each $x\in (-4,4)^m$. The \emph{glued interpolating functions at step $j$} to be 
\begin{equation*}
    \hat{\varphi}_j := \frac{\sum_{L\in\sP^j}\vartheta_L g_L}{\sum_{L\in\sP^j}\vartheta_L}\text{ and }\varphi^\ast_j := \frac{\sum_{L\in\sP^j}\vartheta_L g^{\ast}_L}{\sum_{L\in\sP^j}\vartheta_L},
\end{equation*}
{where $g_L$ and $g^\ast$ are $g_{LL}$ and $g^\ast_{LL}$, respectively. We also define $\varphi_j(x) = ( \overline{\varphi}_j(x), \Psi(y, \overline{\varphi}_j(x)))$ where $\overline{\varphi_j}$ is the first $\bar{n}$ components of $\hat{\varphi}_j$.}
\end{defi}

\subsection{Construction of the external center manifold \texorpdfstring{$\cM^\ast$}{Lg}}

The main goal of this subsection is to show (Theorem \ref{T:ECM}) the existence of the external center manifold $\cM^{\ast}$ which is a $C^{3,\kappa}, \kappa>0$, submanifold of $\R^{m+n}$. The external center manifold does not necessarily lie within 
$\Sigma$. Furthermore, we can construct (Theorem \ref{T:CM}) a center manifold $\cM$ similarly to the classical approach (as in \cite{DS4}) that lies inside $\Sigma$, it will be, however, only of class $C^{2,\alpha}$.

\begin{thm}[Existence of the center manifold $\cM$]\label{T:CM}
Under \cref{A:CM} and \cref{A:CM:parameters}, there is a positive constant \(C=C(\gamma_e,\gamma_h, M_0,N_0, C_e, C_h)\) such that, for any \(j\geq N_0\), the glued interpolating functions \(\varphi_j\) satisfy
\begin{enumerate}[\upshape (A)]
    \item\label{I:CM:C0norm} \(\|\varphi_j\|_{C^{0}} \leq C\bm_0^{1/2m}\); 
    
    \item\label{I:CM:HolderNorm} \(\|D^2\varphi_j\|_{C^{0,\alpha}} \leq C\bm_0^{1/2}\); 

    \item\label{I:CM:coincidence} if \(H\in\sW^i\) and \(L\) is the cube concentric to \(H\) with \(\ell(L) = 9\ell(H)/8\), then \(\varphi_j = \varphi_k \) on \(L\) for any \(j,k\geq i+2\); 

    \item\label{I:CM:convergence} \(\varphi_j\) {converge} in \(C^2\) to a map \(\bphi\) and \(\cM:= \mathrm{graph}(\bphi|_{(-4,4)^m})\) is a \(C^{2,\alpha}\) submanifold of \(\Sigma\).
\end{enumerate}
\end{thm}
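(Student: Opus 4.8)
The plan is to follow the blueprint of \cite[Theorem 1.17]{DS4}, constructing the center manifold $\cM$ as a limit of the glued interpolations $\varphi_j$, but being careful that every estimate we use is available at the $C^{2,\alpha}$ level rather than the $C^{3,\alpha}$ level. Since $\bar h_{HL}$ and hence $h_{HL}$ depend on $\Sigma$ only through the \emph{fixed} linear projection $\bp_{T_{p_H}\Sigma}$ (as noted in the remark after the existence of interpolating functions), the smoothed interpolating maps $g_{HL}$ inherit only the regularity of the Lipschitz approximations $f_{HL}$, the convolution $\varrho_{\ell(L)}$, and the $C^{1,\alpha}$ bound $c(\Sigma)$ on $D\Psi_p$. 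This is exactly what we have, so no new obstruction appears compared to \cite{DS4}; the point is simply to re-run that argument with the regularity bookkeeping adapted.

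First I would establish the local estimates on the building blocks: using \cite[Thm 2.4]{DS3} (the strong Lipschitz approximation, valid for $\Sigma\in C^2$) together with Proposition \ref{P:tilting_opt_planes} and Corollary \ref{C:tilting_opt_planes}, one gets the key pointwise bounds on $g_{HL}$ and its derivatives in terms of $\bm_0$ and $\ell(L)$ — in particular $\|g_{HL}\|_{C^0}\lesssim \bm_0^{1/2m}\ell(L)$, $\|Dg_{HL}\|_{C^0}\lesssim \bm_0^{1/2}\ell(L)^{1-\gamma_e}$ (from the tilting estimates), and, crucially, the second-derivative Hölder bound $[D^2 g_{HL}]_{C^{0,\alpha}}\lesssim \bm_0^{1/2}$ obtained by differentiating the mollified map and using the $C^{1,\alpha}$-norm of $D\Psi_{p_H}$ — this is where the $C^{2,\alpha}$-hypothesis on $\Sigma$ enters and why we gain no more than $C^{2,\alpha}$. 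Next, I would prove that two cubes $H,L$ in the Whitney family with comparable size and nonempty intersection have interpolating functions that are close in $C^2$ on the overlap (a ``compatibility'' estimate), again quantified by powers of $\ell(L)$; this is the technical heart of the gluing and follows, after the tilting estimates of Proposition \ref{P:tilting_opt_planes}, from comparing the two Lipschitz approximations on the common cylinder.

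With these in hand, items (A) and (B) follow by the standard partition-of-unity argument: $\varphi_j=\big(\sum_{L\in\sP^j}\vartheta_L g_L\big)\big/\big(\sum_{L\in\sP^j}\vartheta_L\big)$ is, on each cube of $\sP^j$, a convex-combination perturbation of a single $g_L$, and the cutoffs $\vartheta_L$ scale like $\ell(L)^{-k}$ in their $k$-th derivatives, so the compatibility estimates exactly absorb the derivatives of the $\vartheta_L$; summing the geometric series in $\ell(L)$ (here one uses $\gamma_h,\gamma_e>0$ and the emptiness of the first $N_0+6$ generations from \eqref{E:prop_W:emptygenerations} to make constants uniform) yields the uniform $C^0$ and $C^{2,\alpha}$ bounds. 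Item (C), the eventual coincidence on a slightly enlarged concentric cube once a cube $H$ has stopped, is a direct consequence of the stopping mechanism: once $H\in\sW^i$, no descendant of $H$ subdivides, so for $j,k\ge i+2$ the families $\sP^j$ and $\sP^k$ agree in a neighbourhood of $H$ of the stated size, whence $\varphi_j=\varphi_k$ there. Finally (D): the coincidence property shows $(\varphi_j)$ is locally eventually constant on the open cubes, hence converges locally, and the uniform $C^{2,\alpha}$ bound upgrades this to $C^2$ convergence (with a $C^{2,\alpha}$ limit $\bphi$) on $(-4,4)^m$; that $\cM=\mathrm{graph}(\bphi)\subset\Sigma$ follows because each $g_L$ was built from $h_{HL}=(\bar h_{HL},\Psi_{p_H}\circ\bar h_{HL})$, i.e.\ its graph already lies on $\Sigma$ up to the higher-order error controlled by $c(\Sigma)$, and this property passes to the convex combinations and to the limit (alternatively, one shows $\operatorname{dist}(\cdot,\Sigma)$ vanishes on $\cM$ using the height bound \eqref{L:1HB} together with the construction).

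The main obstacle I expect is precisely the $C^{2,\alpha}$-estimate in (B): in \cite{DS4} one has the luxury of $\Sigma\in C^{3,\alpha}$, which makes $h_{HL}$ automatically $C^{3,\alpha}$ and gives room to spare; here we must check that differentiating $h_{HL}=(\bar h_{HL},\Psi_{p_H}\circ\bar h_{HL})$ twice and estimating the $\alpha$-Hölder seminorm uses only $\|D\Psi_{p_H}\|_{C^{1,\alpha}}=c(\Sigma)$ — so that the composition with $\Psi_{p_H}$ costs exactly one derivative of regularity and no more — and that the mollification $\ast\varrho_{\ell(L)}$ does not degrade the Hölder exponent. Getting the scaling in $\ell(L)$ right in this step, and then verifying that the compatibility estimate of the second paragraph holds with the same $\alpha$ (not a smaller exponent), is the delicate bookkeeping; everything else is a faithful transcription of \cite[Section 4]{DS4}.
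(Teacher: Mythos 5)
Your overall architecture is the same as the paper's: reduce everything to a set of local building‑block estimates on the $g_{HL}$ plus compatibility estimates between neighbouring cubes, then run the partition‑of‑unity computation for (A) and (B), get (C) from the stopping mechanism (once $H\in\sW^i$, the relevant family $\sP^j(H)$ stabilises for $j\ge i+2$), and get (D) by showing $(\varphi_j)$ is Cauchy in $C^2$ and invoking the uniform H\"older bound. That part of your proposal is a faithful transcription of the paper's proof of \cref{T:CM}, which takes the construction estimates (\cref{P:CE}) as input.

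The genuine gap is in how you propose to obtain the building‑block and compatibility estimates themselves. You claim the bound $[D^2 g_{HL}]_{C^{0,\alpha}}\lesssim \bm_0^{1/2}$ is ``obtained by differentiating the mollified map and using the $C^{1,\alpha}$-norm of $D\Psi_{p_H}$,'' and that the $C^2$ compatibility between overlapping cubes follows ``from comparing the two Lipschitz approximations on the common cylinder.'' Neither works. Direct differentiation of $\hat h_{HL}=(\etaa\circ f_{HL})\ast\varrho_{\ell(L)}$ only gives $\|D^2\hat h_{HL}\|_{C^0}\lesssim \Lip(f_{HL})\,\ell(L)^{-1}$, which blows up as $\ell(L)\to 0$ and is nowhere near the uniform bound $C\bm_0^{1/2}$; likewise, $L^1$‑closeness of two Lipschitz approximations on an overlap does not, by itself, yield $C^2$‑closeness with the scaling $\ell(L)^{3+\min\{\alpha,\gamma_{la}\}-l}$ required in item \eqref{I:P:CE:ii} of \cref{P:CE}. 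The missing mechanism is the elliptic system: testing the first variation of $T$ against vector fields tangent to $\Sigma$ shows that $\etaa\circ\bar f_{HL}$ solves $\Delta u = $ (explicit linear term in $D^2\Psi_H(0,0)$) $+$ (errors of size $\bm_0 r_H^{1+\sigma}$) in a distributional sense (\cref{P:PDE}), and it is interior elliptic regularity for the mollified approximate solutions, combined with the $L^1$ estimates of \cref{L:L1estimates}, that upgrades the $L^1$ information to the uniform $C^{2,\alpha}$ (indeed $C^{3,\kappa}$ for $\bar h_{HL}$) bounds and to the compatibility estimates with the correct powers of $\ell(L)$ (\cref{L:CjEstimates}). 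Your correct observation — that composing with $\Psi_{p_H}\in C^{2,\alpha}$ caps the regularity of $h_{HL}$ and hence of $\cM$ at $C^{2,\alpha}$ — explains why one cannot do better, but it is not the source of the estimate; without the PDE step the whole gluing argument has no uniform second‑derivative control to work with.
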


\begin{thm}[Existence of the external center manifold $\cM^{\ast}$]\label{T:ECM}
Under \cref{A:CM} and \cref{A:CM:parameters}, there is a positive constant \(C=C(\gamma_e,\gamma_h, M_0,N_0, C_e, C_h)\) such that, for any \(j\geq N_0\), the glued interpolating functions \(\varphi^\ast_j\) satisfy
\begin{enumerate}[\upshape (A*)]
    \item\label{I:ECM:C0norm} \(\|\varphi^\ast_j\|_{C^{0}} \leq C\bm_0^{1/2m}\); 
    
    \item\label{I:ECM:HolderNorm} \(\|D^3\varphi^\ast_j\|_{C^{0,\kappa}} \leq C\bm_0^{1/2}\); 

    \item\label{I:ECM:coincidence} if \(H\in\sW^i\) and \(L\) is the cube concentric to \(H\) with \(\ell(L) = 9\ell(H)/8\), then \({\varphi}^\ast_j = {\varphi}^\ast_k \) on \(L\) for any \(j,k\geq i+2\); 

    \item\label{I:ECM:convergence} \(\varphi^\ast_j\) {converge} in \(C^3\) to a map \(\bphi^\ast\) and \(\cM^\ast := \mathrm{graph}(\bphi^\ast|_{(-4,4)^m})\) is a \(C^{3,\kappa}\) submanifold of \(\R^{m+n}\).
\end{enumerate}
\end{thm}

In order to prove the existence of $\cM$ and $\cM^\ast$, we implement an argument similar to \cite{DS4}. We start showing how to derive both existences from the constructions estimates that we state below.

\begin{prop}[Construction estimates]\label{P:CE}
Under \cref{A:CM} and \cref{A:CM:parameters}, there is a positive constant \(C=C(\gamma_e,\gamma_h, M_0,N_0, C_e, C_h)\) such that, setting $\kappa = \min\{\alpha,\gamma_{la}\}/2$, the following hold:
\begin{enumerate}[\upshape (i)]
    \item\label{I:P:CE:i} $\|g_H\|_{C^0(B)} + \|g^{\ast}_H\|_{C^0(B)}\leq C\bm_0^{1/2m}$ and $\|g_H\|_{C^{2,\alpha}(B)} + \|g^{\ast}_H\|_{C^{3,\kappa}(B)}\leq C\bm_0^{1/2}$ for $B:=\tball{x_H}{4r_H}{\pi_0}$;
    
    \item\label{I:P:CE:ii} if $H\cap L\neq\emptyset$, then $\|g_H - g_L\|_{C^l(\tball{x_H}{r_H}{\pi_0})} \leq C\bm_0^{1/2}\ell(L)^{3+\min\{\alpha,\gamma_{la}\}-l}$ for any $l\in\{0,1,2\}$ and $\|g^{\ast}_H - g^{\ast}_L\|_{C^h(\tball{x_H}{r_H}{\pi_0})}\leq C\bm_0^{1/2}\ell(L)^{3+2\kappa-h}$ for every $h\in\{0,1,2,3\}$;
    
    \item\label{I:P:CE:iii} $|D^2g_H(x_H) - D^2g_L(x_L)|\leq C\bm_0^{1/2}|x_H - x_L|^{1+\min\{\alpha,\gamma_{la}\}}$ and we also have $|D^3g^{\ast}_H(x_H) - D^3g^{\ast}_L(x_L)| \leq C\bm_0^{1/2}|x_H - x_L|^{\kappa}$;
    
    \item\label{I:P:CE:iv} $\|g_H - y_H \|_{C^0}+\|g^{\ast}_H - y_H \|_{C^0}\leq C\bm_0^{1/2m}\ell(H)$ and $|\pi_H - T_{(x,g_H(x))}\bG_{g_H}| + |\pi_H - T_{(x,g^{\ast}_H(x))}\bG_{g^{\ast}_H}|  \leq C\bm_0^{1/2}\ell(H)^{1-\gamma_e}$ for all $x\in H$;
    
    \item\label{I:P:CE:v} if $L^{\prime}$ is a cube concentric to $L\in\sW^j$ with $\ell(L^{\prime})=9\ell(L)/8$, we have $\|\hat{\varphi}_i - g_L\|_{L^1(L^{\prime})} + \|\varphi^\ast_i - g^\ast_L\|_{L^1(L^{\prime})} \leq C\bm_0^{1/2} \ell(L)^{m+3+\min\{\alpha,\gamma_{la}\}}$ for any $i\geq j$.
\end{enumerate}
\end{prop}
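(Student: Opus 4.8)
The plan is to follow the inductive scheme of \cite[Section 4]{DS4} essentially verbatim, but running it in parallel for the two families of interpolations $g_H$ (valued in $\Sigma$, giving only $C^{2,\alpha}$ control) and $g^\ast_H$ (the ``external'' smoothed averages, giving $C^{3,\kappa}$ control). The key point is that $g^\ast_{HL}$ is built purely from $\hat h_{HL}=(\boldsymbol\eta\circ f_{HL})\ast\varrho_{\ell(L)}$, i.e. from a mollified $Q$-valued Lipschitz approximation, \emph{without} any composition with $\Psi_{p_H}$; hence every derivative estimate on $g^\ast_H$ only sees the mollification scale $\ell(L)$ and the excess/height decay of $T$, not the regularity of $\Sigma$. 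By contrast $g_{HL}=(\bar h_{HL},\Psi_{p_H}\circ\bar h_{HL})$ inherits a factor $\|D\Psi_{p_H}\|_{C^{1,\alpha}}\le c(\Sigma)\le\bm_0^{1/2}$, which is why $g_H$ is only $C^{2,\alpha}$. So (i) and (iv) for $g^\ast_H$ should come from the standard mollification estimates $\|D^k(\phi\ast\varrho_t)\|_{C^0}\lesssim t^{-k}\|\phi\|_{C^0}$ combined with the Lipschitz-approximation bounds (\cite[Thm 2.4]{DS3}: $\mathrm{Lip}(f_{HL})\lesssim \bE^{\gamma_{la}}$, $\|f_{HL}\|_{C^0}\lesssim \bh$, and the $L^1$/$L^2$ bounds on the non-coincidence set), gaining the correct powers of $\ell(L)$ from the stopping conditions in \Cref{C:tilting_opt_planes}; these are the one-cube estimates.

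Next I would prove (ii)–(iii), the comparison estimates between neighboring cubes. For two neighbors $H,L$, one compares $f_{HL}$ and $f_{LL}$ (same current, nearby reference planes $\pi_H,\pi_L$ which by \Cref{P:tilting_opt_planes}(ii) satisfy $|\pi_H-\pi_L|\lesssim \bm_0^{1/2}\ell(L)^{1-\gamma_e}$), uses the height bound in small cylinders, and then uses the smoothing to upgrade $L^1$/$L^2$ differences to $C^k$ differences by interpolation and the inverse-estimate $\|D^k(\cdot\ast\varrho_t)\|_{C^0}\lesssim t^{-m/p-k}\|\cdot\|_{L^p}$. The crucial bookkeeping is the exponent: for $g^\ast$ one must land on $\ell(L)^{3+2\kappa-h}$, i.e. one must extract a surplus of $\ell(L)^{3+2\kappa}$ over the naive $h$-th derivative scaling; this is exactly where the sharp decay of the excess ($2-2\gamma_e$) and of the smoothed average (an extra $\ell(L)^{1+\gamma_h}$ from the height, plus the $\int|x|^2\varrho=0$ cancellation giving a second-order gain in the mollification) must be combined carefully. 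I would mirror \cite[Prop 4.4 and its proof]{DS4} step by step, replacing their second-order statements by third-order ones for $g^\ast$ and checking that each inequality used only needs $\Sigma\in C^2$ (for the $f_L$'s) — which is granted since \cite{DS3} only assumes $C^2$. Estimate (iii) is the pointwise Hölder bound on the top derivative and follows from (ii) by a standard telescoping over the genealogical tree, summing $\sum_k (2^{-k})^{\kappa}$.

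Finally (v): if $L^\prime$ is the slightly enlarged concentric cube to a stopping cube $L\in\sW^j$, one must show $\hat\varphi_i$ (resp. $\varphi^\ast_i$) is $L^1$-close to $g_L$ (resp. $g^\ast_L$) on $L^\prime$ with error $\bm_0^{1/2}\ell(L)^{m+3+\min\{\alpha,\gamma_{la}\}}$. Since $\hat\varphi_i=\big(\sum_{H\in\sP^i}\vartheta_H g_H\big)/\big(\sum_{H\in\sP^i}\vartheta_H\big)$ is a convex combination, on $L^\prime$ only cubes $H$ neighboring $L$ (hence of comparable size, by the Whitney property \Cref{D:WhitneyDecomposition}(w3) and \Cref{P:tilting_opt_planes}(i)–(ii)) contribute, and then $|\hat\varphi_i-g_L|\le\max_H|g_H-g_L|$ pointwise, so this is a consequence of (ii) times $|L^\prime|\lesssim\ell(L)^m$. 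The same argument works for $\varphi^\ast_i$ using the $g^\ast$-version of (ii). This proves \Cref{P:CE}.

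I expect the main obstacle to be the exponent accounting in (ii)–(iii) for $g^\ast_H$: getting the full $C^{3,\kappa}$ (third derivative Hölder) control with the precise power $\ell(L)^{3+2\kappa-h}$ requires that the mollification kernel's second-moment vanishing ($\int|x|^2\varrho\,\mathrm dx=0$) be exploited to gain an extra order — without it one only gets $C^{2,\kappa}$ — and that this gain survives the passage from the tilted-plane approximations $f_{HL}$ back to graphs over $\pi_0$ via the interpolating functions $g^\ast_{HL}$. One must also verify that the linear maps $\bp_{\pi_H^\perp\cap T_{p_H}\Sigma}$ entering $\bar h_{HL}$ (for the $g_H$ side) and the change of plane $\pi_H\to\pi_0$ do not destroy the decay; the change of plane costs $|\pi_H-\pi_0|\lesssim\bm_0^{1/2}$, which is harmless, but the Hölder seminorm of the resulting chart requires the full \Cref{P:tilting_opt_planes} tilting estimates. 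Everything downstream — deriving \Cref{T:CM} and \Cref{T:ECM} from \Cref{P:CE} — is then the standard argument of \cite[Section 4]{DS4}: the coincidence property (C)/(C*) follows because on a stopping cube's enlargement $\hat\varphi_j$ stabilizes once $j$ is large enough that no finer cube meets $L^\prime$, the $C^0$ and Hölder bounds (A),(B)/(A*),(B*) follow from (i) together with a partition-of-unity argument controlling $\|D^k\hat\varphi_j\|$ by $\max_H\|D^k g_H\|$ plus commutator terms estimated via (ii), and $C^2$ (resp. $C^3$) convergence with the limiting manifold being $C^{2,\alpha}\subset\Sigma$ (resp. $C^{3,\kappa}$) follows from (iii) and a Cauchy-in-$C^{k,\cdot}$ argument.
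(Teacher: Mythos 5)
Your treatment of items (iii) and (v), and your reduction of (ii) to an $L^1$ bound plus a priori $C^{3,\kappa}$ (resp.\ $C^{2,\alpha}$) bounds via the inverse/interpolation inequality $\|D^j(g^\ast_H-g^\ast_L)\|_{C^0}\le C r_L^{-m-j}\|g^\ast_H-g^\ast_L\|_{L^1}+Cr_L^{3+\kappa-j}\|D^3(g^\ast_H-g^\ast_L)\|_{C^{0,\kappa}}$, match the paper. The genuine gap is in the source of the a priori bounds themselves, i.e.\ item (i) and the $C^{3,\theta}$ estimates feeding into (ii)--(iii). You attribute the $C^{3,\kappa}$ control of $g^\ast_H$ to ``standard mollification estimates'' plus the vanishing second moment of $\varrho$. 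This cannot work: $\etaa\circ f_{HL}$ is a priori only Lipschitz with constant $O(\bm_0^{\gamma_{la}}\ell(L)^{\gamma_{la}})$, so pure mollification gives $\|D^3\hat{h}_{HL}\|_{C^0}\le C\ell(L)^{-2}\Lip(\etaa\circ f_{HL})$, which degenerates as $\ell(L)\to 0$, whereas (i) requires $\|g^\ast_H\|_{C^{3,\kappa}}\le C\bm_0^{1/2}$ uniformly in $\ell(L)$; likewise the second-moment cancellation only yields a second-order gain when the mollified function already has bounded second derivatives, which is precisely what has to be proved. The engine in the paper is the elliptic system of \cref{P:PDE}: testing the stationarity of $T$ in $\Sigma$ with vector fields tangent to $\Sigma$ shows that $\etaa\circ\bar{f}_{HL}$ is an approximate solution of $\Delta u=(\text{explicit linear function of }x)$ with weak-norm errors of size $\bm_0 r_H^{1+\sigma}$, and interior elliptic estimates for this system, applied to the smoothed average (\cref{L:CjEstimates}), are what produce the uniform $C^{3,\theta}$ bounds and the decay $\ell(L)^{3+\sigma-j}$ for differences. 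Your proposal never invokes the first variation of $T$.

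Moreover, ``mirroring \cite{DS4} step by step and checking each inequality only needs $\Sigma\in C^2$'' fails exactly at this point: De Lellis--Spadaro control the error terms in their elliptic system using three derivatives of $\Psi$, i.e.\ $\Sigma\in C^{3,\varepsilon_0}$. The content of \cref{P:PDE} here is to redo that computation with only $C^{2,\alpha}$ data by Taylor-expanding $D\Psi_H$ and $D^2\Psi_H$ to the available order and isolating the quadratic form $\bL=\bL(D^2\Psi_H(0,0))$ as an explicit, $L$-independent, polynomial forcing term --- smooth even though $\Sigma$ is not, which is why $g^\ast$ gains a full derivative over $\Sigma$. Without this step your plan has no mechanism for extracting $C^{3,\kappa}$ regularity of the external interpolants from a merely $C^{2,\alpha}$ ambient manifold, and items (i)--(iii) with the stated exponents are out of reach.
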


\begin{proof}[Proof of \cref{T:CM} and \cref{T:ECM}]
{Firstly, we define $\sP^j(H) := \{L\in\sP^j: L\cap H\neq\emptyset\}\setminus\{H\}$,} \(\chi_H := \vartheta_{H}/(\sum_{L\in\sP^j}\vartheta_L)\) for each \(H\in\sP^j\), and $\varphi_j(x):=(\bar{\varphi}_j(x), \Psi(x,\bar{\varphi}_j(x)))$. Note that 
\begin{align}
    \sum_{H\in\sP^j}\chi_H =  {\chi_L + }\sum_{H\in\sP^j(L)}\chi_H = 1 \text{ on } [-4,4]^m, \label{E:CM:Chi=1}\\
    \| D^l\chi_H\|_{C^0} \leq C_0 \ell(H)^{-l}, \quad \forall l\in \N, \label{E:CM:boundChi}\\
    \| D^l\chi_H\|_{C^{0,\theta}} \leq \frac{C(\theta)}{ \ell(H)^{l+\theta}}. \label{E:CM:ChiInterp}
\end{align}
By definition of \(\hat{\varphi}_j\) and $\varphi_j^\ast$, we have
\begin{equation*}
    \| \hat{\varphi}_j \|_{C^0} + \| \varphi^\ast_j \|_{C^0} \leq \sum_{L\in\sP^j(H){\cup \{H\}}}\|g_L\|_{C^0}+\|g_L^\ast\|_{C^0} \overset{\eqref{I:P:CE:ii}\textup{ of Prop. } \ref{P:CE}}{\leq}  C \bm_0^{1/2m},
\end{equation*}
where the summation sign disappeared by reasons of the cardinality of \(\sP^j(H)\) is bounded by a dimensional constant \(C_0\). This and the definition of $\varphi_j$ and $\varphi_j^\ast$ conclude the proof of \eqref{I:CM:C0norm} and \hyperref[I:ECM:C0norm]{(A$^\ast$)}. From \eqref{E:CM:Chi=1}, whenever \(x\in H\), we achieve that
\begin{equation}\label{E:CM:psi}
    \hat{\varphi}_j(x) = g_H(x) + \sum_{L\in\sP^j(H)}(g_L-g_H)(x)\chi_L(x)\text{ and } \varphi^\ast_j(x) = g_H^\ast(x) + \sum_{L\in\sP^j(H)}(g_L^\ast-g_H^\ast)(x)\chi_L(x).
\end{equation}
Differentiating the first equation in \eqref{E:CM:psi}, we infer that
\begin{equation*}
\begin{aligned}
    \| D^2\hat{\varphi}_j\|_{C^{0,\alpha}} &\leq \| D^2 g_H\|_{C^{0,\alpha}} + C_0\sum_{l=0}^2 \sum_{L\in\sP^j(H)}\left(\|D^l(g_H - g_L)\|_{C^{0,\alpha}}\|D^{2-l}\chi_L\|_{C^{0}}\right.\\
    &\quad \left.+ \|D^l(g_H - g_L)\|_{C^{0}}\|D^{2-l}\chi_L\|_{C^{0,\alpha}}\right) \\
    \overset{\eqref{E:CM:boundChi},\eqref{E:CM:ChiInterp}}&{\leq} \|D^2g_H\|_{C^{0,\alpha}} + C(\alpha)\sum_{l=0}^2\sum_{L\in\sP^j(H)}\ell(L)^{l-2}\left(\|D^l(g_H - g_L)\|_{C^{0,\alpha}}\right.\\
    &\quad \left.+ \|D^l(g_H - g_L)\|_{C^{0}} \ell(L)^{-\alpha} \right)\\
    \overset{\eqref{I:P:CE:i}\textup{ of Prop. }\ref{P:CE}}&{\leq} C\bm_0^{1/2} + C(\alpha)\sum_{l=0}^2\sum_{L\in\sP^j(H)}\ell(L)^{l-2}\left(\|D^l(g_H - g_L)\|_{C^{0,\alpha}}\right.\\
    &\quad \left. + \|D^l(g_H - g_L)\|_{C^{0}} \ell(L)^{-\alpha} \right).
\end{aligned}
\end{equation*}
Notice that for each \(l\in\{0,1, 2\}\) and \(L\in\sP^j(H)\), it holds
\begin{equation*}
\begin{aligned}
    \ell(L)^{l-2}\|D^l(g_H - g_L)\|_{C^{0,\alpha}} \overset{\eqref{I:P:CE:ii}\textup{ of Prop. }\ref{P:CE}}&{\leq} C\bm_0^{1/2}\ell(L)^{1-\alpha+\min\{\alpha,\gamma_{la}\}},
\end{aligned}
\end{equation*}
whereas we also obtain
\begin{equation*}
\begin{aligned}
    \|D^l(g_H - g_L)\|_{C^{0}} \ell(L)^{l-2-\alpha} \overset{\eqref{I:P:CE:ii}\textup{ of Prop. }\ref{P:CE}}&{\leq} C\bm_0^{1/2}\ell(L)^{1-\alpha+\min\{\alpha,\gamma_{la}\}},
\end{aligned}
\end{equation*}
The last three inequalities lead to 
\begin{equation}\label{E:CM:psi-holder}
\begin{aligned}
    \| D^2\hat{\varphi}_j\|_{C^{0,\alpha}(H)} \leq C \bm_0^{1/2}{\left( 1 +\right.}\ell(L)^{1-\alpha+\min\{\alpha,\gamma_{la}\}}{\left.\right)},
\end{aligned}
\end{equation}
where we use that the cardinality of \(\sP^j(H)\) is bounded by a dimensional constant. 
Note that the estimate on the Holder norm in \eqref{E:CM:psi-holder} is only on the cube \(H\), we need to extend it uniformly to the whole cube \((-4,4)^m\) to prove \eqref{I:CM:HolderNorm}. To that end, we fix \(x,y\in(-4,4)^m\) and \(H,L\in\sP^j\) such that \(x\in H\) and \(y\in L\). 

If \(H\cap L\neq\emptyset\), the proof is trivial by \eqref{E:CM:psi-holder} and the triangle inequality. If \(H\cap L=\emptyset\), we prove the last inequality as follows. Without loss of generality, assume that \(\ell(L)\geq\ell(H)\), then by simple geometric considerations, we have
\begin{equation*}
    \max\{|x-x_H|,|y-x_L|\} \leq \sqrt{m}\ell(L) \leq 2\sqrt{m}|x-y|.
\end{equation*}
Using that \(\hat{\varphi}_j \equiv g_Y\) on a neighborhood of \(x_Y\) for any cube \(Y\in\sP^j\), we thus obtain
\begin{equation*}
\begin{aligned}
    | D^2\hat{\varphi}_j(x) - D^2\hat{\varphi}_j(y) | &\leq | D^2\hat{\varphi}_j(x) - D^2\hat{\varphi}_j(x_H) | + | D^2g_H(x_H) - D^2g_L(x_L) |  + | D^2\hat{\varphi}_j(x_L) - D^2\hat{\varphi}_j(y) |\\
    \overset{\eqref{I:P:CE:iii}\textup{ of Prop. }\ref{P:CE},\eqref{E:CM:psi-holder}}&{\leq} C\bm_0^{1/2}|x_H-x_L|^{1-\alpha+\min\{\alpha,\gamma_{la}\}}.
\end{aligned}    
\end{equation*}
This inequality and an easy application of the chain rule on $\varphi_j$ finish the proof of \eqref{I:CM:HolderNorm}. Notice that the proof of \hyperref[I:ECM:HolderNorm]{(B$^\ast$)} goes in the very same lines of the four previous displayed equations with the use of the bound on the \emph{third} derivatives of $g_H^\ast$ and $g_L^\ast$ provided by \cref{P:CE} in items \eqref{I:P:CE:i}, \eqref{I:P:CE:ii}, and \eqref{I:P:CE:iii}. 

Fix \(H\in\sW^i\) and \(j\geq i+2\), by construction, we have \(\sP^j(H)=\sP^{i+2}(H)\subset\sW\). If \(L\) is the cube concentric to \(H\) with \(\ell(L) = 9\ell(H)/8\), recalling the definition of \(\vartheta_Y\), we easily ensure \(\spt(\vartheta_Y)\cap L=\emptyset\) for all cubes \(Y\notin\sP^j(H)\). Hence, \eqref{I:CM:coincidence} and \hyperref[I:CM:coincidence]{(C$^\ast$)} are verified. We finally start the proof of \eqref{I:CM:convergence}. Differentiating the first equality in \eqref{E:CM:psi} and putting \eqref{E:CM:boundChi} into account, for \(l\in\{0,1,2\}\), we obtain
\begin{equation}\label{E:CM:psi-derivada}
\begin{aligned}
    \|D^l\hat{\varphi}_j\|_{C^{0}} &\leq \|D^lg_H\|_{C^{0}} + \sum_{k=0}^l\sum_{L\in\sP^j(H)}\|D^k(g_H-g_L)\|_{C^{0}}\ell(L)^{k-l}   \\
    \overset{\eqref{I:P:CE:i}\textup{ of Prop. } \ref{P:CE}}&{\leq} C\bm_0^{1/2} + \sum_{k=0}^l\sum_{L\in\sP^j(H)}\|D^k(g_H-g_L)\|_{C^{0}}\ell(L)^{k-l}\\
    \overset{\eqref{I:P:CE:iii}\textup{ of Prop. } \ref{P:CE}}&{\leq} 
     C\bm_0^{1/2}.
\end{aligned}
\end{equation}
Pick a point \(x\in (-4,4)^m\) such that \(x\in H\cap L\) with \(L\in\sP^j\) and \(H\in\sP^{j+1}\). Notice that
\begin{equation}\label{E:CEII:quase-la}
    \hat{\varphi}_j(x_H) = g_H(x_H)\text{ and } \hat{\varphi}_{j+1}(x_L)=g_L(x_L),
\end{equation}
thus the following holds
\begin{equation*}
\begin{aligned}
    |\hat{\varphi}_j(x) - \hat{\varphi}_{j+1}(x)| &\leq |\hat{\varphi}_j(x) - \hat{\varphi}_j(x_H)| + |g_H(x_H) - g_L(x_L)| + |\hat{\varphi}_{j+1}(x_L) - \hat{\varphi}_{j+1}(x)| \\
    &\leq C_0 \left(\|D\hat{\varphi}_j\|_{C^{0}} + \|D\hat{\varphi}_{j+1}\|_{C^0(L)} \right)\ell(L) + |g_H(x_H) - y_H| \\
    &\quad + |g(x_L) - y_L| + |p_H - p_L| \\
    \overset{
    \eqref{E:CM:psi-derivada},(\ast)}&{\leq}  C\bm_0^{1/2}\ell(L) = C2^{-j},
\end{aligned}
\end{equation*}
in \((\ast)\) we use the same computation as in 
\cite[Eq. 4.46]{Spolaor}. Notice that the same bound holds to $\varphi_j$ due to the Lipschitz continuity of $\Psi$. So, passing the last inequality to the limit, we obtain that \((\varphi_j)_{j\geq N_0}\) uniformly converges to a map \(\bphi\). It is straightforward to derive the \(C^2\) convergence, indeed it is concluded as follows
\begin{equation*}
\begin{aligned}
    |D^2\hat{\varphi}_j(x) - D^2\hat{\varphi}_{j+1}(x)| &\overset{\eqref{E:CEII:quase-la}}{\leq} \frac{|D^{{2}}\hat{\varphi}_j(x) - D^2\hat{\varphi}_j(x_H)|}{|x-x_H|^{\alpha}}|x-x_H|^{\alpha} + |D^2g_H(x_H) - D^2g_L(x_L)| \\
    &\quad + \frac{|D^2\hat{\varphi}_{j+1}(x_L) - D^2\hat{\varphi}_{j+1}(x)|}{|x-x_H|^{\alpha}}|x-x_H|^{\alpha} \\
    \overset{\eqref{I:P:CE:iii}\textup{ of Prop. }\ref{P:CE},\eqref{E:CM:psi-holder}}&{\leq} C\ell(L)^{1+\min\{\alpha,\gamma_{la}\}} = C2^{-j(1+\min\{\alpha,\gamma_{la}\})}.
\end{aligned}
\end{equation*}
The chain rule and the definition of $\varphi_j$ assures that this bound is also true for $\varphi_j$. Again, using the same argument and the bounds on the third derivatives given in \cref{P:CE}, we obtain 
\begin{equation*}
    |D^3\varphi^\ast_j(x) - D^3\varphi^\ast_{j+1}(x)| \leq C2^{-2j\kappa}.
\end{equation*}
At last, the Holder regularity of the external center manifold \(\cM^\ast:=\mathrm{graph}(\bphi^\ast|_{(-4,4)^m})\) and of the center manifold \(\cM:=\mathrm{graph}(\bphi|_{(-4,4)^m})\) are a consequence of the convergences above and \hyperref[I:CM:HolderNorm]{(B$^\ast$)} and \eqref{I:CM:HolderNorm}, respectively.
\end{proof}

\subsection{Proof of the construction estimates for \texorpdfstring{$\cM^\ast$}{Lg} through the elliptic system}

\begin{defi}[Tangential parts]
Fix $H\in\sP^j$ and let $\varkappa$ be the orthogonal complement of $\pi_H$ in $T_{p_H}\Sigma$. Given $p\in\R^{m+n}$, any set $\Omega\subset\pi_H$, and any function $\chi:p+\Omega\to\pi_H^{\perp}$, the map $\bp_\varkappa\circ \chi$ will be called the \emph{tangential part of $\chi$} and denoted by $\bar{\chi}$.
\end{defi}

As it is well-known, from the minimality of $T$, one can derive an elliptic systems that leads to great estimates on the $\pi_L$-approximations that are gathered in \cref{P:PDE}. These estimates in turn lead (not trivially, see Lemmas \ref{L:CjEstimates} and \ref{L:L1estimates}) to the bounds on the derivatives given in the construction estimates (\cref{P:CE}).

We mention that the following proof differs from the proof presented in \cite{DS4}, since the authors rely on the $C^{3,\varepsilon_0}$-regularity of $\Sigma$ to get the bounds on the error terms that will appear naturally when investigating the first variation of the current $\bG_{f_{HL}}$ (see \eqref{E:P:PDE:several-quantities}). 

\begin{prop}[Elliptic system]\label{P:PDE}
Assume \cref{A:CM} and \cref{A:CM:parameters}, we denote $B:=\tball{p_L}{8r_L}{\pi_H}$ and $\sigma := \min\{\alpha,\gamma_{la}\}$. There is a positive constant \(C=C(\gamma_e,\gamma_h, M_0,N_0, C_e, C_h)\) such that
\begin{equation}\label{E:P:PDE}
    \left|\int_{B} D(\etaa\circ\bar{f}_{HL}):D\zeta + (\bp_{\pi_H}(x-p_H)^t)\cdot \bL\cdot \zeta \right|\leq C\bm_0r_H^{{m+}1+\sigma}\left( r_H\|\zeta\|_{C^1(B)} + \|\zeta\|_{C^0(B)}\right),
\end{equation}
for any test function $\zeta$. Moreover 
\begin{equation}\label{E:P:PDE-moreover}
    \| \bar{h}_{HL} - \etaa\circ\bar{f}_{HL} \|_{L^1(\tball{x_L}{7r_L}{\pi_H})} \leq C\bm_0 r_L^{m+3+\sigma}.
\end{equation}
\end{prop}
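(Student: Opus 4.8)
## Proof Plan for Proposition \ref{P:PDE}

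The plan is to read \eqref{E:P:PDE} as an \emph{almost Euler--Lagrange equation} for the tangential average $\etaa\circ\bar f_{HL}$, obtained by confronting the stationarity of $T$ in $\Sigma$ with the first variation of the multivalued graph $\bG_{f_{HL}}$. Throughout one works on a fixed cylinder $\cyltilted{p_L}{32r_L}{\pi_H}$ for an admissible pair $(H,L)$, so that by \cref{P:tilting_opt_planes}, \cref{L:ConstancyLemmaTilted} and the existence of the interpolating functions the $\pi_H$-approximation $f_{HL}$ of \cite[Thm 2.4]{DS3} is defined and $\etaa\circ\bar f_{HL}$ makes sense. The three inputs are: (a) the area-minimality of $T$ in $\Sigma$, which gives $\delta T(X)=0$ for every $X\in C^1_c$ tangent to $\Sigma$; (b) the strong Lipschitz approximation \cite[Thm 2.4]{DS3}, which lets one replace $\delta T(X)$ by $\delta\bG_{f_{HL}}(X)$ up to an error proportional to $\|X\|_{C^1}$ times the mass of the non-graphical part of $T$ in the cylinder, the latter carrying the gain encoded by $\gamma_{la}$; and (c) the Taylor expansion of the area integrand on $Q$-valued graphs from \cite{DS2}, which splits $\delta\bG_{f_{HL}}(X)$ into the linear (Dirichlet) part in $\etaa\circ f_{HL}$ plus superlinear remainders controlled by the excess, hence by $\bm_0$ and powers of $\ell(L)$ through \cref{C:tilting_opt_planes}.

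Given a test field $\zeta$ valued in $\varkappa=\pi_H^\perp\cap T_{p_H}\Sigma$, I would lift it to an ambient vector field $X$ tangent to $\Sigma$ by adding the $(T_{p_H}\Sigma)^\perp$-valued correction $q\mapsto D\Psi_{p_H}|_{\bp_{T_{p_H}\Sigma}q}[\zeta]$, using that near $p_H$ the submanifold $\Sigma$ is the graph of $\Psi_{p_H}$ over $T_{p_H}\Sigma$. Restricting $X$ to $\bG_{f_{HL}}$ and using that $T\subset\Sigma$ forces the $(T_{p_H}\Sigma)^\perp$-component of $f_{HL}$ to be $\Psi_{p_H}$ of its tangential part, the expansion of $\delta\bG_{f_{HL}}(X)$ produces the Dirichlet term $\int_B D(\etaa\circ\bar f_{HL}):D\zeta$ together with cross-terms built from $\Psi_{p_H}$ and its derivatives. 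Here the $C^{2,\alpha}$-regularity of $\Sigma$ enters decisively: since $\Psi_{p_H}(0)=0$, $D\Psi_{p_H}(0)=0$ and $\|D^2\Psi_{p_H}\|_{C^{0,\alpha}}\lesssim c(\Sigma)\le\bm_0$, the first-order expansion $D\Psi_{p_H}|_z=D^2\Psi_{p_H}(0)[z,\cdot]+O\big(c(\Sigma)|z|^{1+\alpha}\big)$ reorganizes those cross-terms into the displayed affine forcing $(\bp_{\pi_H}(x-p_H)^t)\cdot\bL\cdot\zeta$, with $\bL$ the corresponding contraction of $D^2\Psi_{p_H}(0)$, modulo a remainder of order $\bm_0 r_H^{1+\alpha}$ (recall $|x-p_H|\le C r_H$ on $B$). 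Bounding the remaining errors --- the area-versus-Dirichlet remainder, the mass of the non-graphical part of $T$, the mutual tilting of the competing planes $\pi_0,\pi_H,\hat\pi_H$ and the approximate tangent planes of $T$ via \cref{P:tilting_opt_planes}, and the $C^{2,\alpha}$ Taylor remainder of $\Psi_{p_H}$ --- and keeping the worst exponent $\sigma=\min\{\alpha,\gamma_{la}\}$ yields \eqref{E:P:PDE}. This is the step where \cite{DS4} exploits the extra derivative of $\Sigma$; with only $C^{2,\alpha}$ available one obtains the weaker exponent $1+\sigma$.

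For the ``moreover'' estimate \eqref{E:P:PDE-moreover}, note first that $\bp_\varkappa$, i.e.\ the restriction of the linear projection $\bp_{T_{p_H}\Sigma}$ to $\pi_H^\perp$, commutes both with convolution and with the averaging map $\etaa$; hence $\bar h_{HL}=\bp_{T_{p_H}\Sigma}(\hat h_{HL})=(\etaa\circ\bar f_{HL})\ast\varrho_{\ell(L)}$, so that $\bar h_{HL}-\etaa\circ\bar f_{HL}$ is exactly the mollification error, at scale $\ell(L)$, of the single-valued map $u:=\etaa\circ\bar f_{HL}$. The key point is that $\varrho$ is radial with $\int\varrho=1$ and $\int|y|^2\varrho(y)\,dy=0$, so all of its moments of order $1$, $2$ and $3$ vanish and $u\mapsto u\ast\varrho_{\ell(L)}$ reproduces polynomials of degree $\le 3$ exactly. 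Writing $u$ as the cubic polynomial that exactly solves the Poisson problem with the affine right-hand side furnished by $\bL$ plus a remainder $w$, equation \eqref{E:P:PDE} together with interior Schauder/$L^p$ estimates for the perturbed Laplacian shows that $w$ is almost harmonic with size governed by the right-hand side of \eqref{E:P:PDE}; since the cubic part is annihilated by the mollification error, only $w$ contributes, and its error is bounded in $L^1(\tball{x_L}{7r_L}{\pi_H})$ by $C\bm_0 r_L^{m+3+\sigma}$. The quantitative form of this last implication is precisely what is carried out in \Cref{L:CjEstimates} and \Cref{L:L1estimates}.

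The main obstacle is the error bookkeeping of the second paragraph: one must check that \emph{every} error produced in passing from $\delta T(X)=0$ to \eqref{E:P:PDE} --- the area/Dirichlet Taylor remainder, the non-graphical mass of $T$, the tilting of the competing planes, and the loss caused by the tangency-correcting field being only $C^{1,\alpha}$ (because $\Sigma$ is merely $C^{2,\alpha}$) --- lands at an exponent no worse than $1+\sigma$ with $\sigma=\min\{\alpha,\gamma_{la}\}$, so that it is absorbed into the right-hand side; in particular the absence of the third derivative of $\Sigma$, available in \cite{DS4}, must cost exactly the passage to $1+\sigma$ and nothing more. A related subtlety is pinning down $\bL$ itself: since $D^2\Psi_{p_H}$ admits no expansion beyond Hölder continuity, the whole curvature forcing must be read off from the single value $D^2\Psi_{p_H}(0)$, every higher-order $x$-dependence being absorbed into the $\bm_0 r_H^{1+\sigma}$ error --- and it is this that fixes both the displayed bilinear form of the leading term and the exponent.
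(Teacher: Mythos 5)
Your plan follows essentially the same route as the paper: the tangency-correcting lift $\chi(q)=(0,\zeta(x),D_y\Psi(x,y)\cdot\zeta(x))$, the comparison of $\delta T=0$ with $\delta\bG_{f_{HL}}$ via the mass of the non-graphical part, the first-variation expansion of \cite[Thm 4.1]{DS2}, and the $C^{2,\alpha}$ Taylor expansion of $D\Psi_{p_H}$ and $D^2\Psi_{p_H}$ at the origin to extract the affine forcing $\bL$ built from $D^2\Psi_{p_H}(0)$ (the paper additionally integrates by parts in the $S_3{:}S_4$ term to put it in the form $x^t\cdot\bL_{3,4}\cdot\zeta$), with the same error exponent $1+\sigma$. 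Your account of the moreover part via the vanishing moments of $\varrho$ up to order three and the cubic-plus-remainder decomposition is consistent with the argument of \cite[Eq. 5.2]{DS4} that the paper invokes without reproducing.
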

\begin{proof}
We will denote geometric constants by $C_0$, whereas $C$ denotes constants depending upon the parameters $\gamma_h, \gamma_e, M_0, N_0, C_e$ and $C_h$. In order to simplify the notation, we fix a system of coordinates $(x, y, z) \in \pi_H \times \varkappa \times\left(T_{p_H} \Sigma\right)^{\perp}$ so that $p_H=(0,0,0)$. Although the domains of the various maps are subsets $\Omega$ of $p_L + \pi_H$, from now on we will consider them as functions of $x$; i.e., we shift their domains to $\bp_{\pi_H}(\Omega)$. We also use $\Psi_H$ for the map $\Psi_{p_H}$ which graph gives \(\Sigma\) as in \cref{A:CM}. Recall that $\Psi_H(0,0)=0$, $D \Psi_H(0,0)=0$ and $\left\| \Psi_H\right\|_{C^{2, \alpha}} \leq \bm_0^{1 / 2}$.

Given a test function $\zeta$ and any point $q=(x, y, z) \in \Sigma$, we consider the vector field $\chi(q)=\left(0, \zeta(x), D_y \Psi(x, y) \cdot \zeta(x)\right)$. The vector field $\chi$ is tangent to $\Sigma$, and therefore $\delta T(\chi)=0$. Thus, we easly conclude that
\begin{equation}\label{E:P:PDE:1var}
    \left|\delta \bG_{f_{HL}}(\chi)\right|=\left|\delta \bG_{f_{HL}}(\chi)-\delta T(\chi)\right| \leq C_0 \int_{\cyltilted{p_L}{8r_L}{\pi_H}}|D \chi| d\left\|\bG_{f_{HL}}-T\right\|. 
\end{equation}
Since we have \(\|D\Psi_H\|\leq \bm_0^{1/2}\), choosing $\varepsilon_2$ sufficiently small, we achieve \begin{equation}
    |\chi|\leq 2|\zeta| \text{ and } |D\chi|\leq 2|D\zeta|+2|\zeta|.
\end{equation}
We denote $\bE_{HL}:=\bE(T,\cyltilted{p_L}{32r_L}{\pi_H}$ and recall the estimates in \cite[Theorem 1.4]{DS3}, where $K_{HL} \subset \tball{p_L}{8r_L}{\pi_H}$ is the {good} set, {namely}
\begin{align}
    |D f_{HL}| &\leq C_0 \bE_{HL}^{\gamma_{la}}+C_0 r \bA \leq C \bm_0^{\gamma_{la}} r_H^{\gamma_{la}}, \label{E:P:PDE:boundDf}\\
    |f_{HL}| &\leq C_0 \bh\left(T, \cyltilted{p_L}{32r_L}{\pi_H}\right)+C_0\left(\bE_{HL}^{1 / 2}+r_H \bA\right) r_H \leq C \bm_0^{\frac{1}{2m}} r_H^{1+\gamma_h}, \label{E:P:PDE:C0bound_lipapprox}\\
    \int_{\tball{p_L}{8r_L}{\pi_H}}|D f_{HL}|^2 &\leq C_0  \bE_{HL}r_H^m \leq C \bm_0 r_H^{m+2-2 \gamma_e} \label{E:P:PDE:boundDir}, \\
    |\tball{p_L}{8r_L}{\pi_H} \setminus K_{HL}| &\leq C_0 \bE_{HL}^{\gamma_{la}}\left(\bE_{HL}+r_H^2 \bA^2\right)r_H^m \leq C \bm_0^{1+\gamma_{la}} r_H^{m+(2-2 \gamma_e)(1+\gamma_{la})}, \\
    \biggl|\biggr.\|T\|\left(\cyltilted{p_L}{8r_L}{\pi_H}\right) &-|\tball{p_L}{8r_L}{\pi_H}| - \frac{1}{2} \int_{\tball{p_L}{8r_L}{\pi_H}}| D f_{HL}|^2 \biggl.\biggr| \\
    &\quad \leq C_0 \bE_{HL}^{\gamma_{la}}\left(\bE_{HL}+r_H^2 \bA^2\right)r_H^m \leq C \bm_0^{1+\gamma_{la}} r_H^{m+(2-2 \gamma_e)(1+\gamma_{la})}. \nonumber
\end{align}
Concerning \eqref{E:P:PDE:C0bound_lipapprox} observe that the statement of \cite[Theorem 1.4]{DS3} indeed bounds $\osc(f_{HL})$. Moreover, in our case we have $p_H=(0,0,0) \in \spt(T)$ and $\spt(T) \cap$ $\bG_{f_{HL}} \neq \emptyset$. Thus we conclude $|f_{HL}| \leq C_0 \osc(f_{HL})+C_0 \bh\left(T, \cyltilted{p_L}{32r_L}{\pi_H}\right)$. Taking selections $f_{HL}=\sum_{i}\a{f_{i,HL}}$ and $\bar{f}_{HL}=\sum_{i}\a{\bar{f}_{i,HL}}$, we have that $\spt(\bG_{f_{HL}})\subset\Sigma$ implies
\begin{equation}
    f_{HL} = \sum_{i=1}^{Q}\a{(\bar{f}_{i,HL}, \Psi_H(\bar{f}_{i,HL}) )}.
\end{equation} 
We have from \cite[Theorem 4.1]{DS2} (straightforwardly estimating $\mathrm{Err}$ with \eqref{E:P:PDE:boundDf} and \eqref{E:P:PDE:boundDir}) the following inequality for the first variation
\begin{equation}\label{E:P:PDE:several-quantities}
\begin{aligned}
    \delta\bG_{f_{HL}}(\chi) &= \mathrm{Err} + \int_{\tball{p_L}{8r_L}{\pi_H}}\sum_{i=1}^{Q}\left(S_1 + S_2 + S_3\right):\left(S_4+S_5\right) + D_x\zeta:D_x\bar{f}_{i,HL},\\
    \mathrm{Err} &\leq C\|\zeta\|_{C^1}\bm_0^{1+\gamma_{la}}r_H^{m+2-2\gamma_e+\gamma_{la}},
\end{aligned}
\end{equation}
where we are using the following notations
\begin{equation*}
\begin{aligned}
    S_1 &:= D_{xy}\Psi_H(x,\bar{f}_{i,HL}(x))\cdot\zeta, \quad S_2 := (D_{yy}\Psi_H(x,\bar{f}_{i,HL}(x))\cdot D_x\bar{f}_{i,HL})\cdot \zeta \\
    S_3 &:= D_{y}\Psi_H(x,\bar{f}_{i,HL}(x))\cdot D_x\zeta, \quad S_4 := D_x\Psi_H(x,\bar{f}_{i,HL}(x)), \quad 
    S_5 := D_y\Psi_H(x,\bar{f}_{i,HL}(x))\cdot D_x\bar{f}_{i,HL}. 
\end{aligned}
\end{equation*}
Our goal now is to use \eqref{E:P:PDE:several-quantities} and \eqref{E:P:PDE:1var} to prove the proposition. To that end, we will estimate all related terms. Let us start with a Taylor expansion for $\Psi_H\in C^{2,\alpha}$ using \cite[Proposition 2.1]{berger2020quality} and the aforementioned considerations for $\Psi_H$ to derive the following bound
\begin{align*}
    |D\Psi_H(x,y) - D_xD\Psi_H(0,0)\cdot x -D_yD\Psi_H(0,0)\cdot y | &\leq C_0 \| \Psi_H\|_{C^{2,\alpha}}(|x|^2 + |y|^2)^{1/2+\alpha/2},\\
    |D^2\Psi_H(x,y) - D^2\Psi_H(0,0) | &\leq C_0 \| \Psi_H\|_{C^{2,\alpha}}(|x| + |y|)^{\alpha}.
\end{align*}
By \eqref{E:P:PDE:C0bound_lipapprox} we have $\|(x,\bar{f}_{i,HL}(x))\|\leq C\|(x,f_{i,HL})\|\leq C|x|$. Joining the last fact with $\| \Psi_H\|_{C^{2,\alpha}}\leq \bm_0^{1/2}$ and the last displayed inequalities, we infer the following 
\begin{align}
    |D\Psi_H(x,\bar{f}_{i,HL}(x)) - D_xD\Psi_H(0,0)\cdot x | &= O\left(\bm_0^{1/2}r_H^{1+\alpha}\right) + O\left(\bm_0^{1/2}r_H^{1+\alpha}\right), \label{E:P:PDE:DpsiO}\\
    |D\Psi_H(x,\bar{f}_{i,HL}(x))| &\leq \bm_0^{1/2}r_H, \label{E:P:PDE:Dpsi}\\
    |D^2\Psi_H(x,\bar{f}_{i,HL}(x)) - D^2\Psi_H(0,0) | &= O\left(\bm_0^{1/2}r_H^{\alpha}\right) \label{E:P:PDE:D2psiO},\\
    |D^2\Psi_H(x,\bar{f}_{i,HL}(x))|&\leq  \bm_0^{1/2}\label{E:P:PDE:D2psi}.
\end{align}
We now start to estimate the several quantities appearing in \eqref{E:P:PDE:several-quantities}. Henceforth we omit the domain of integration $\tball{p_L}{8r_L}{\pi_H}$ for the sake of simplicity. We begin as follows
\begin{equation}\label{E:P:PDE:S1eq1}
\begin{aligned}
    \int\sum_{i=1}^Q S_1:S_4 \overset{\eqref{E:P:PDE:D2psi},\eqref{E:P:PDE:DpsiO}}&{=} \int\sum_{i=1}^Q  (D_{xy}\Psi_H(x,\bar{f}_{i,HL}(x))\cdot\zeta):( D_{x}D\Psi_H(0,0)\cdot x)\\
    &\quad + O\left(\bm_0r_H^{1+\min\{\alpha,\gamma_{la}\}}\int|\zeta|\right) \\
    \overset{\eqref{E:P:PDE:D2psiO},\eqref{E:P:PDE:Dpsi}}&{=}   \int\sum_{i=1}^Q  (D_{xy}\Psi_H(0,0)\cdot\zeta):( D_{x}D\Psi_H(0,0)\cdot x)\\
    &\quad + O\left(\bm_0r_H^{1+\min\{\alpha,\gamma_{la}\}}\int|\zeta|\right) \\
    &=: O\left(\bm_0r_H^{1+\min\{\alpha,\gamma_{la}\}}\int|\zeta|\right) 
    + \int  x^t\cdot\bL_{1,4}\cdot\zeta,
\end{aligned}
\end{equation}
where $\bL_{1,4}=\bL_{1,4}(D^2\Psi_H(0,0))$ is defined in the last inequality and we bring the reader's attention to the fact that it does not depend on $L$. It is easy to see that $\bL_{1,4}$ is a quadratic form of $D^2\Psi_H(0,0)$. We also have
\begin{equation}\label{E:P:PDE:S1eq2}
\begin{aligned}
    \int\sum_{i=1}^Q S_1:S_5 \overset{\eqref{E:P:PDE:D2psi},\eqref{E:P:PDE:Dpsi},\eqref{E:P:PDE:boundDf}}&{=}  O\left(\bm_0^{1+\gamma_{la}}r_H^{1+\gamma_{la}}\int|\zeta|\right).
\end{aligned}
\end{equation}
Now that we handled the terms involving $S_1$, let us turn the attention to $S_2$. We have
\begin{equation}\label{E:P:PDE:S2}
    \int\sum_{i=1}^Q S_2:(S_4+S_5) \overset{\eqref{E:P:PDE:D2psi},\eqref{E:P:PDE:Dpsi},\eqref{E:P:PDE:boundDf}}{=} O\left(\bm_0^{1+\gamma_{la}}r_H^{1+\gamma_{la}}\int|\zeta|\right).
\end{equation}
It remains to take care of $S_3$, we proceed as follows
\begin{equation}\label{E:P:PDE:S3eq1}
    \int\sum_{i=1}^Q S_3:S_5 \overset{\eqref{E:P:PDE:D2psi},\eqref{E:P:PDE:Dpsi},\eqref{E:P:PDE:boundDf}}{=} O\left(\bm_0^{1+\gamma_{la}}r_H^{2+\gamma_{la}}\int|D\zeta|\right).
\end{equation}
Additionally, we obtain
\begin{equation}\label{E:P:PDE:S3eq2}
\begin{aligned}
    \int\sum_{i=1}^Q S_3:S_4 \overset{\eqref{E:P:PDE:DpsiO},\eqref{E:P:PDE:Dpsi}}&{=} \int\sum_{i=1}^Q ((D_{yx}\Psi_H(0,0)\cdot x )\cdot D_x\zeta) : D_x\Psi_H(x,\bar{f}_{i,HL}(x))\\
    &\quad + O\left( \bm_0 r_H^{2+\min\{\alpha,\gamma_{la}\}}\int|D\zeta|\right)\\
    \overset{\eqref{E:P:PDE:DpsiO},\eqref{E:P:PDE:D2psi}}&{=} \int\sum_{i=1}^Q ((D_{yx}\Psi_H(0,0)\cdot x )\cdot D_x\zeta) : (D_{xx}\Psi_H(0,0)\cdot x) \\
    &\quad +O\left( \bm_0 r_H^{2+\min\{\alpha,\gamma_{la}\}}\int|D\zeta|\right)\\
    \overset{(\ast)}&{=} O\left( \bm_0 r_H^{2+\min\{\alpha,\gamma_{la}\}}\int|D\zeta|\right) + \int x^t\cdot \bL_{3,4}\cdot \zeta,
\end{aligned}
\end{equation}
where in $(\ast)$ we integrate by parts and define $\bL_{3,4} = \bL_{3,4}(D^2\Psi_H(0,0))$ in accordance with the equality. As before, we bring the reader's attention to the fact that $\bL_{3,4}$ does not depend on $L$ and is a quadratic form of $D^2\Psi_H(0,0)$. Denoting $\bL=\bL(D^2\Psi_H(0,0)):= \bL_{1,4}+\bL_{3,4}$ and putting together \eqref{E:P:PDE:several-quantities}, \eqref{E:P:PDE:S1eq1}, \eqref{E:P:PDE:S1eq2}, \eqref{E:P:PDE:S2}, \eqref{E:P:PDE:S3eq1}, and \eqref{E:P:PDE:S3eq2}, we obtain that
\begin{equation*}
\begin{aligned}
    \delta\bG_{f_{HL}}(\chi) &= \int\left(D_x\etaa\circ\bar{f}_{HL}:D_x\zeta + x^t\cdot \bL\cdot \zeta \right) + O\left(\|\zeta\|_{C^1}\bm_0^{1+\gamma_{la}}r_H^{m+2-2\gamma_e+\gamma_{la}}\right) \\
    &\quad + O\left(\bm_0r_H^{{m+}1+\min\{\alpha,\gamma_{la}\}}\|\zeta\|_{L^1}\right) + O\left(\bm_0r_H^{{m+}2+\alpha}\|D\zeta\|_{L^1}\right).
\end{aligned}
\end{equation*}
Such equation inserted into \eqref{E:P:PDE:1var} generates
\begin{equation}\label{E:P:PDE:0.5}
\begin{aligned}
    \left|\int D_x\etaa\circ\bar{f}_{HL}:D_x\zeta + x^t\cdot \bL\cdot \zeta \right|&\leq O\left(\|\zeta\|_{C^1}\bm_0^{1+\gamma_{la}}r_H^{m+2-2\gamma_e+\gamma_{la}}\right) + O\left(\bm_0r_H^{{m+}2+\alpha}\|D\zeta\|_{L^1}\right) \\
    &\quad + O\left(\bm_0r_H^{{m+}1+\min\{\alpha,\gamma_{la}\}}\|\zeta\|_{L^1}\right) + C_0 \int|D \zeta| d\vartheta,    
\end{aligned}
\end{equation}
where we are using for any Borel set $E$
\begin{equation*}
    \vartheta(E):=\cH^m(E\setminus K_{HL}) + \|T\|((E\setminus K_{HL})\times\R^n) \text{ and } \|T-\bG_{f_{HL}}\|(E\times\R^n) \leq C_0\vartheta(E).
\end{equation*}
The same argument provided in \cite[Eq. 5.20]{DS4} ensures that
\begin{equation*}
    \vartheta(\tball{p_L}{8r_L}{\pi_H}) \leq C\bm_0r_H^{m+2-2\gamma_e+\gamma_{la}}.
\end{equation*}
By \eqref{E:P:PDE:0.5} and the last inequality, we derive that
\begin{equation}
\begin{aligned}
    \left|\int D_x\etaa\circ\bar{f}_{HL}:D_x\zeta + x^t\cdot \bL\cdot \zeta \right|&\leq O\left(\|\zeta\|_{C^1}\bm_0 r_H^{m+2-2\gamma_e+\gamma_{la}}\right) + O\left(\bm_0r_H^{{m+}2+\alpha}\|D\zeta\|_{L^1}\right) \\
    &\quad + O\left(\bm_0r_H^{{m+}1+\min\{\alpha,\gamma_{la}\}}\|\zeta\|_{L^1}\right).    
\end{aligned}
\end{equation}
Thanks to the choice of the exponents, we have $\gamma_{la} - 2\gamma_e = 49\gamma_{la}/50 < 0$, hence \eqref{E:P:PDE} follows. The proof of the moreover part goes along the very same line of the proof of \cite[Eq. 5.2]{DS4} using \eqref{E:P:PDE}, we omit it here.
\end{proof}

For the sake of clarity and completeness, we will state the following two lemmas that will be used in the proof of \ref{P:CE}.

\begin{lemma}[From the elliptic system to $C^j$ estimates]\label{L:CjEstimates}
Assume \cref{A:CM} and \cref{A:CM:parameters} and set $B^{\prime}:= \tball{p_H}{5r_H}{\pi_H}$ and $B:= \tball{p_H}{4r_H}{\pi_H}$. Then, we obtain that
\begin{align}
    \|\bar{h}_{HL}-\bar{h}_H\|_{C^j(B^{\prime})}  + \|g^{\ast}_{HL}-g^{\ast}_H\|_{C^j(B)} &\leq C\bm_0^{1/2}\ell(L)^{3+\min\{\alpha,\gamma_{la}\}-j},\quad j\in\{0,1,2,3\}, \label{E:Ck:h-bar1}\\
    \|\bar{h}_{HL}-\bar{h}_H\|_{C^{3,\theta}(B^{\prime})}  + \|g^{\ast}_{HL}-g^{\ast}_H\|_{C^{3,\theta}(B^{\prime})} &\leq C\bm_0^{1/2}\ell(L)^{\min\{\alpha,\gamma_{la}\}-\theta}, \quad \forall \theta\in (0,1),
    \label{E:Ck:h-bar2}\\
    \|h_{HL}-h_H\|_{C^j(B^{\prime})} + \|g_{HL}-g_H\|_{C^j(B)} &\leq C\bm_0^{1/2}\ell(L)^{3+\min\{\alpha,\gamma_{la}\}-j},\quad j\in\{0,1,2\}, \label{E:Ck}\\
    \|h_{HL}-h_H\|_{C^{2,\alpha}(B^{\prime})} + \|g_{HL}-g_H\|_{C^{2,\alpha}(B)} &\leq C\bm_0^{1/2}\ell(L)^{1-\alpha+\min\{\alpha,\gamma_{la}\}} \label{E:Ck,alpha}.
\end{align}
Consequently, items \eqref{I:P:CE:i} and \eqref{I:P:CE:iv} of \cref{P:CE} hold.
\end{lemma}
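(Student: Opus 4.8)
The plan is to transfer the elliptic estimate \eqref{E:P:PDE} into pointwise $C^j$ (and $C^{3,\theta}$) bounds by a standard Schauder/comparison argument, exactly as in \cite[Sections 5.1--5.3]{DS4}, being careful that the linear lower-order term $(\bp_{\pi_H}(x-p_H)^t)\cdot\bL\cdot\zeta$ in \eqref{E:P:PDE} is \emph{affine} in $x$ and hence harmless for the derivative estimates of order $\geq 2$. First I would introduce, on the rescaled domain, the harmonic (or rather $\bL$-corrected linear-elliptic) competitor $w_{HL}$ solving the constant-coefficient system obtained by dropping the right-hand side of \eqref{E:P:PDE} with the same boundary data as $\etaa\circ\bar f_{HL}$ on $\partial\tball{p_L}{8r_L}{\pi_H}$; subtracting and testing against $\zeta=\etaa\circ\bar f_{HL}-w_{HL}$, the right-hand side of \eqref{E:P:PDE} controls $\|\etaa\circ\bar f_{HL}-w_{HL}\|_{L^2}$ and, after Caccioppoli and interior elliptic regularity for the smooth competitor, one gets $\|\bar h_{HL}-w_{HL}\|_{C^j}\lesssim \bm_0^{1/2}\ell(L)^{3+\sigma-j}$ on slightly smaller balls, using \eqref{E:P:PDE-moreover} to pass from $\etaa\circ\bar f_{HL}$ to its mollification $\bar h_{HL}$.

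The second step is to compare the competitors coming from two different cubes $H$ and $L$ with $H\cap L\neq\emptyset$: both $\bar h_{HL}$ and $\bar h_H$ approximate $T$ on a common region (up to changing the reference plane from $\pi_H$ to $\pi_L$, controlled by Proposition \ref{P:tilting_opt_planes}(ii)--(iii) which gives $|\pi_H-\pi_L|\lesssim \bm_0^{1/2}\ell(L)^{1-\gamma_e}$), so their difference solves, modulo the same small right-hand side, the \emph{homogeneous} constant-coefficient system; a classical iteration on dyadic scales (the one behind \cite[Prop.~5.7, Prop.~5.8]{DS4}, or equivalently \cite[Eq.~4.46]{Spolaor} cited in the text above) upgrades the $L^1$/$L^2$ closeness into the $C^j$ and $C^{3,\theta}$ closeness claimed in \eqref{E:Ck:h-bar1}--\eqref{E:Ck:h-bar2}. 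The passage from the tangential maps $\bar h_{HL},\bar h_H$ to the full maps $h_{HL}, h_H$ and then to the $\pi_0$-graphical reparametrizations $g_{HL}, g_H$ (and $g^\ast_{HL}, g^\ast_H$) is purely a change-of-graph computation: $h_{HL}=(\bar h_{HL},\Psi_{p_H}\circ\bar h_{HL})$, and since $\Psi_{p_H}\in C^{2,\alpha}$ with $\|\Psi_{p_H}\|_{C^{2,\alpha}}\leq\bm_0^{1/2}$, composing with it costs one derivative of $\Psi_{p_H}$ but nothing in the decay exponent at orders $\leq 2$; this is precisely why $h_{HL}$ only enjoys $C^{2,\alpha}$ bounds \eqref{E:Ck,alpha} whereas the \emph{external} interpolations $g^\ast_{HL}$ (built from $\hat h_{HL}$, the plain $\pi_H$-average with no $\Sigma$-composition) inherit the full $C^{3,\theta}$ regularity of the elliptic solution, giving \eqref{E:Ck:h-bar1}. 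The reparametrization from graphs over $\pi_H$ to graphs over $\pi_0$ is handled by \cite[Appendix C]{DS4} type estimates using the tilting bounds, and does not change any exponent.

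Finally, items \eqref{I:P:CE:i} and \eqref{I:P:CE:iv} of \cref{P:CE} follow by combining the above with the interior bounds on a single cube: the $C^0$ bounds come from \eqref{E:P:PDE:C0bound_lipapprox} and the mollification estimate \eqref{E:P:PDE-moreover}; the $C^{2,\alpha}$ (resp. $C^{3,\kappa}$) bounds on $g_H$ (resp. $g^\ast_H$) come from applying the one-cube estimate of Step~1 with $L=H$ together with the elliptic regularity of $w_{HH}$, whose $C^{3,\theta}$ norm is bounded by the $C^{3,\theta}$ norm of its boundary data times a scale-invariant constant; and the tangent-plane tilting estimate in \eqref{I:P:CE:iv} is read off from $|\pi_H - T_{(x,g_H(x))}\bG_{g_H}|\lesssim \|Dg_H - Dg_H(x_H)\|_{C^0(H)} + |\pi_H - \hat\pi_H|$, controlled by $\bm_0^{1/2}\ell(H)^{1-\gamma_e}$ via $C^2$ bounds on $g_H$ and Proposition \ref{P:tilting_opt_planes}(i),(iv). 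The main obstacle I anticipate is \textbf{not} any single estimate but the bookkeeping of exponents through three successive changes of representation ($\etaa\circ\bar f_{HL}\to\bar h_{HL}\to h_{HL}\to g_{HL}$, and the parallel external chain), making sure that the loss $\gamma_{la}-2\gamma_e<0$ secured at the end of the proof of \cref{P:PDE} is exactly what is needed so that the ``error'' exponent $m+2-2\gamma_e+\gamma_{la}$ beats the ``main term'' scaling $m+2$ after division by $r_H^m$, and that composing with $\Psi_{p_H}$ genuinely costs nothing at orders $\leq 2$ so that $h_{HL}$ still satisfies \eqref{E:Ck,alpha} — this is where the $C^{2,\alpha}$ (rather than $C^{3,\alpha}$) hypothesis on $\Sigma$ is felt, and where one must invoke the Taylor expansion bound from \cite[Proposition 2.1]{berger2020quality} rather than a naive $C^3$ expansion of $\Psi_{p_H}$.
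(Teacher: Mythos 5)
Your proposal takes essentially the same route as the paper: the paper's entire proof of this lemma is the single remark that the argument of \cite[Lemma 5.3]{DS4} goes through verbatim once \cref{P:PDE} is substituted for \cite[Proposition 5.2]{DS4}, and your competitor-plus-Schauder scheme, the cube comparison via \cref{P:tilting_opt_planes}, and the change-of-graph bookkeeping (with $g^\ast$ inheriting $C^{3,\theta}$ because it bypasses the composition with $\Psi_{p_H}$) are precisely the content of that cited proof. One small correction to your closing remark (the sign error also appears in the paper's proof of \cref{P:PDE}): what the hierarchy $\gamma_e\leq\gamma_{la}/400$ actually delivers, and what is needed for the error $r_H^{m+2-2\gamma_e+\gamma_{la}}$ to be of higher order than the main term, is $\gamma_{la}-2\gamma_e>0$, not $<0$.
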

\begin{rem}
Notice that $\bar{h}_{HL}$ and $g^{\ast}_{HL}$ are proven to be in fact of class $C^{3,\min\{\alpha,\gamma_{la}\}}$. On the other hand, since $h_{HL}$ is a composition of $\Psi_H$ and $\bar{h}_{HL}$, the regularity of $h_{HL}$ (and consequently of $g_{HL}$) do not always exceed $C^{2,\alpha}$ which is $\Sigma$'s regularity. 
\end{rem}
\begin{proof}
The proof of \cite[Lemma 5.3]{DS4} works in the same lines using \cref{P:PDE} instead of \cite[Proposition 5.2]{DS4}.
\end{proof}

\begin{lemma}[Tilted $L^1$ estimates]\label{L:L1estimates}
Assume \cref{A:CM} and \cref{A:CM:parameters}. We have that
\begin{equation}
    \|h_{HJ} - \hat{h}_{LM}\|_{L^1(\tball{p_J}{2r_J}{\pi_H})} \leq C\bm_0\ell(J)^{m+3+\min\{\alpha,\gamma_{la}\}}.
\end{equation}
\end{lemma}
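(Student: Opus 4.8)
The plan is to prove the tilted $L^1$ estimate
\[
  \|h_{HJ} - \hat{h}_{LM}\|_{L^1(\tball{p_J}{2r_J}{\pi_H})} \leq C\bm_0\ell(J)^{m+3+\min\{\alpha,\gamma_{la}\}}
\]
by a triangle-inequality decomposition into pieces that are each controlled either by the elliptic-system consequences (\cref{P:PDE}, \cref{L:CjEstimates}) or by already-established tilting and height estimates from \cref{P:tilting_opt_planes} and \cref{C:tilting_opt_planes}. First I would reduce everything to comparisons of smoothed averages built over a common reference plane: write $h_{HJ} = (\bar h_{HJ}, \Psi_{p_H}\circ\bar h_{HJ})$ and similarly for the other terms, and insert the intermediate quantities $h_{H}$ (the smoothed average on $H$ itself), $\etaa\circ \bar f_{HJ}$, and the analogous objects attached to $L$ and $M$, so that the left-hand side is bounded by a sum of the form
\[
  \|h_{HJ} - h_H\|_{L^1} + \|h_H - h_L\|_{L^1} + \|h_L - \hat h_{LM}\|_{L^1} + (\text{averaging errors}).
\]
The first and third terms are exactly \eqref{E:Ck} of \cref{L:CjEstimates} (with $j=0$), which gives the bound $C\bm_0^{1/2}\ell(J)^{3+\min\{\alpha,\gamma_{la}\}}$ on a ball of radius comparable to $r_J\sim\ell(J)$, hence $C\bm_0^{1/2}\ell(J)^{m+3+\min\{\alpha,\gamma_{la}\}}$ after integrating; note the extra half-power of $\bm_0$ is absorbed since $\bm_0\le 1$.

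The middle term $\|h_H - h_L\|_{L^1}$ is the genuinely new ingredient and the step I expect to be the main obstacle. Here $H$ and $L$ are (neighboring or nested) cubes with possibly different optimal planes $\pi_H\neq\pi_L$, so $h_H$ and $h_L$ are graphs over different planes and must first be re-expressed over the common plane $\pi_H$ using the change-of-graph machinery already invoked in the construction of the interpolating functions (the functions $g_{HL}$, $g_H$, $g_L$ of the Existence-of-interpolating-functions proposition). The strategy is to bound $\|h_H - h_L\|_{L^1}$ by $\|h_H - h_{HJ}\|_{L^1} + \|h_{HJ} - h_{HL}\|_{L^1} + \|h_{HL} - h_L\|_{L^1}$, where $h_{HL}$ is the $\pi_H$-approximation's smoothed average associated to $L$: the outer two terms fall again under \cref{L:CjEstimates}, while the middle term $\|h_{HJ} - h_{HL}\|_{L^1}$ compares two $\pi_H$-smoothed averages of the \emph{same} current $T$ over overlapping cylinders, and is controlled by the interior $L^1$ estimate for Lipschitz approximations (combining \eqref{E:P:PDE:C0bound_lipapprox}, \eqref{E:P:PDE:boundDir}, the stopping-cube upper bounds \eqref{E:upper_bound_stopping_cubes:excess}--\eqref{E:upper_bound_stopping_cubes:height}, and the convolution estimates for $\varrho_{\ell(\cdot)}$) together with \eqref{E:P:PDE-moreover} to pass between $\etaa\circ\bar f$ and $\bar h$. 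This is exactly the point where the choice of decay exponents $\gamma_e,\gamma_h,\gamma_{la}$ in \cref{A:CM:parameters} must be used to guarantee the target exponent $m+3+\min\{\alpha,\gamma_{la}\}$ — one has to check that every error term produced (of the schematic forms $\bm_0^{1+\gamma_{la}}\ell^{m+2-2\gamma_e+\gamma_{la}}$ and $\bm_0\,\ell^{m+3+\min\{\alpha,\gamma_{la}\}}$ coming from \cref{P:PDE}) has an exponent at least $m+3+\min\{\alpha,\gamma_{la}\}$, which is where $\gamma_{la}-2\gamma_e<0$ and $\gamma_h$ small are invoked, just as in the proof of \eqref{E:P:PDE} above.

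Finally I would handle the averaging/convolution errors: passing from $f_{HJ}$ (resp. $f_{HL}$) to its convolved average $\hat h$ costs at most $\ell\cdot\sup|Df|$ in $C^0$ by the standard estimate for $\varrho_t$, and the tangential-projection step $\bar h = \bp_{T_{p_H}\Sigma}\circ\hat h$ followed by composition with $\Psi_{p_H}$ changes $L^1$ norms only up to the Lipschitz constant $1+\bm_0^{1/2}$ of $\Psi_{p_H}$, so these contribute nothing beyond the stated bound. Since this argument is a word-for-word transcription of the proof of \cite[Lemma 5.4]{DS4}, with \cref{P:PDE} and \cref{L:CjEstimates} replacing the corresponding $C^{3,\alpha}$-based statements \cite[Prop. 5.2, Lemma 5.3]{DS4} and with the observation that our $\bar h$-type quantities do not see the $C^{2,\alpha}$-regularity of $\Sigma$ (only $h_{HL}$ does, through the final composition with $\Psi_{p_H}$, which is harmless in $L^1$), I would conclude by citing that proof and indicating only the substitutions above.

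\begin{proof}
The proof of \cite[Lemma 5.4]{DS4} applies verbatim, replacing the use of \cite[Proposition 5.2]{DS4} and \cite[Lemma 5.3]{DS4} by \cref{P:PDE} and \cref{L:CjEstimates}, respectively, and using the upper bounds for stopping cubes in \cref{C:tilting_opt_planes}. Concretely, one decomposes
\[
  \|h_{HJ} - \hat h_{LM}\|_{L^1(\tball{p_J}{2r_J}{\pi_H})} \leq \|h_{HJ}-h_H\|_{L^1} + \|h_H-h_L\|_{L^1} + \|h_L-\hat h_{LM}\|_{L^1},
\]
bounds the first and third terms by \eqref{E:Ck} of \cref{L:CjEstimates} (with $j=0$) after integrating over a region of measure $\leq C\ell(J)^m$, and for the middle term inserts the intermediate $\pi_H$-smoothed averages $h_{HL}$ and $h_{HJ}$, estimating $\|h_{HJ}-h_{HL}\|_{L^1}$ by combining \eqref{E:P:PDE-moreover}, the Lipschitz-approximation bounds \eqref{E:P:PDE:C0bound_lipapprox}--\eqref{E:P:PDE:boundDir}, the stopping-cube estimates \eqref{E:upper_bound_stopping_cubes:excess}--\eqref{E:upper_bound_stopping_cubes:height}, and the standard convolution estimate for $\varrho_{\ell(\cdot)}$. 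Passing from $\etaa\circ\bar f$-type quantities to $\bar h$-type quantities via \eqref{E:P:PDE-moreover}, and composing with the Lipschitz map $\Psi_{p_H}$ (which does not affect $L^1$ norms beyond the factor $1+\bm_0^{1/2}$), the choice of exponents in \cref{A:CM:parameters} — in particular $\gamma_{la}-2\gamma_e<0$ and $\gamma_h$ small — guarantees that every error term has exponent at least $m+3+\min\{\alpha,\gamma_{la}\}$ in $\ell(J)$, yielding the claim.
\end{proof}
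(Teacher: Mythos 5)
Your high-level conclusion (transcribe the corresponding DS4 lemma, substituting \cref{P:PDE} and \cref{L:CjEstimates} for their $C^{3,\alpha}$-based analogues) is exactly what the paper does --- it simply invokes \cite[Lemma 5.5]{DS4} (note: 5.5, not 5.4). However, the concrete decomposition you sketch is not that proof, and it has two genuine gaps. First, the power of $\bm_0$: \eqref{E:Ck} of \cref{L:CjEstimates} yields $C\bm_0^{1/2}\ell(J)^{m+3+\sigma}$ after integration, whereas the lemma claims $C\bm_0\,\ell(J)^{m+3+\sigma}$. Since $\bm_0\leq\varepsilon_{cm}^2\ll 1$, one has $\bm_0^{1/2}\gg\bm_0$, so your remark that ``the extra half-power of $\bm_0$ is absorbed since $\bm_0\le 1$'' runs in the wrong direction: a $\bm_0^{1/2}$ bound does \emph{not} imply a $\bm_0$ bound. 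Every term you route through \eqref{E:Ck} therefore falls short of the stated estimate by a factor $\bm_0^{-1/2}$.

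Second, and more fundamentally, your decomposition never actually performs the change of reference plane, which is the entire content of a ``tilted'' estimate. \cref{L:CjEstimates} only compares objects built over the \emph{same} plane $\pi_H$ (namely $h_{HL}$ against $h_H\equiv h_{HH}$). In your chain $\|h_H-h_{HJ}\|+\|h_{HJ}-h_{HL}\|+\|h_{HL}-h_L\|$, the last term compares the $\pi_H$-based object $h_{HL}$ with the $\pi_L$-based object $h_L=h_{LL}$; this is precisely the tilted comparison the lemma is asserting, and it is not covered by \cref{L:CjEstimates}. The DS4 argument avoids both problems by working at the level of currents: one shows that the graph currents of $h_{HJ}$ and $\hat h_{LM}$ each differ from (a reparametrization of) $T$ in the overlap region by masses of order $\bm_0^{1+\gamma_{la}}\ell^{m+(2-2\gamma_e)(1+\gamma_{la})}$ and $\bm_0\ell^{m+3+\sigma}$ (via \eqref{E:P:PDE-moreover} and the Lipschitz-approximation errors), and then converts current closeness into an $L^1$ bound over the common plane $\pi_H$ using the graph-comparison lemma \cite[Lemma B.1]{DS4} together with the tilting bound $|\pi_H-\pi_L|\leq C_e\bm_0^{1/2}\ell(L)^{1-\gamma_e}$ and the height bounds; the full power $\bm_0$ arises because these error terms are products of two small quantities. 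You would need to replace your triangle-inequality scheme with this current-level argument to close the proof.
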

\begin{proof}
The very same proof given in \cite[Lemma 5.5]{DS4}.
\end{proof}

We are now able to derive the construction estimates for the functions $g_{HL}$ and $g_{HL}^\ast$.

\begin{proof}[Proof of \cref{P:CE}]
Since $\kappa = \min\{\alpha,\gamma_{la}\}/2$, recall that \eqref{I:P:CE:i} and \eqref{I:P:CE:iv} are proven in \cref{L:CjEstimates}. Let us show how to prove \eqref{I:P:CE:ii}. Take  $H,L\in\sP^j$ with nonempty intersection. We show that the inequality
\begin{equation}\label{E:CE:proof1}
    \|h_{H} - \hat{h}_{L}\|_{L^1(\tball{p_H}{2r_H}{\pi_H})} \leq C\bm_0^{1/2}\ell(L)^{m+3+\min\{\alpha,\gamma_{la}\}}
\end{equation}
holds true. If $\ell(L) = \ell(H)$, it is a direct consequence of \cref{L:L1estimates}. If $\ell(L) = 2\ell(H)$, take $J$ to be the father of $H$. It is clear that $J\cap L\neq\emptyset$, therefore we can apply \cref{L:L1estimates} to obtain
\begin{equation*}
      \|h_{HJ} - \hat{h}_{L}\|_{L^1(\tball{p_H}{2r_H}{\pi_H})} \leq C\bm_0\ell(L)^{m+3+\min\{\alpha,\gamma_{la}\}}.
\end{equation*}
On the other hand, by \eqref{E:Ck}, we have
\begin{equation*}
    \|h_{H} - h_{HJ}\|_{L^1(\tball{p_H}{2r_H}{\pi_H})} \leq C\bm_0^{1/2}\ell(L)^{m+3+\min\{\alpha,\gamma_{la}\}}.
\end{equation*}
The last two inequalities prove the validity of \eqref{E:CE:proof1} in the latter case. Recalling that by construction, we have $\bG_{g_X}\res\cyltilted{x_H}{r_H}{\pi_0} = \bG_{h_{X}}\res\cyltilted{x_H}{r_H}{\pi_0} $ (same with $g^{\ast}$ and $\hat{h}$) for $X\in\{L,H\}$. Thus, as a consequence of \cite[Lemma B.1]{DS4} we deduce
\begin{equation}\label{E:CE:proof2}
    \| g_H - g_L\|_{L^1(\tball{x_H}{r_H}{\pi_0})} + \| g^{\ast}_H - g^{\ast}_L\|_{L^1(\tball{x_H}{r_H}{\pi_0})}\leq C\bm_0^{1/2} \ell(L)^{m+3+\min\{\alpha,\gamma_{la}\}}.
\end{equation}
The fact that $\|g_H-g_L\|_{C^{2,\alpha}(\tball{x_H}{r_H}{\pi_0})}\leq C\bm_0^{1/2}\ell(L)^{1-\alpha+\min\{\alpha,\gamma_{la}\}}$ (see \eqref{E:Ck,alpha}) and the last inequality together with
\begin{equation*}
    \| D^j(g_H - g_L) \|_{C^0} \leq Cr_L^{-m-j}\|g_H-g_L\|_{L^1} + Cr_L^{2+\alpha-j}\|D^2(g_H-g_L)\|_{C^{0,\alpha}}
\end{equation*}
imply \eqref{I:P:CE:ii}, for $g^{\ast}_H$ and $g^{\ast}_L$ the proof is exactly the same using \eqref{E:Ck:h-bar2} and
\begin{equation*}
    \| D^j(g^{\ast}_H - g^{\ast}_L) \|_{C^0} \leq Cr_L^{-m-j}\|g^{\ast}_H-g^{\ast}_L\|_{L^1} + Cr_L^{3+\kappa-j}\|D^3(g^{\ast}_H-g^{\ast}_L)\|_{C^{0,\kappa}}.
\end{equation*}
We now prove \eqref{I:P:CE:v} since it is an easy consequence of \eqref{E:CE:proof1} instead of \eqref{E:Ck,alpha}. Indeed, if $L\in\sW^j$ and $i\geq j$, consider the subset $\sP^i(L):=\{X\in\sP^i: X\cap L\neq\emptyset\}$. Note that the cardinality of $\sP^i(L)$ is bounded by a dimensional constant. If $L^{\prime}$ is a cube concentric to $L$ with $\ell(L^{\prime}) = 9\ell(L)/8$, by definition of $\hat{\varphi}_j$ and the last considerations, we have
\begin{equation*}
\begin{aligned}
    \|\hat{\varphi}_i - g_L\|_{L^1(L^{\prime})} + \|\varphi^\ast_i - g^{\ast}_L\|_{L^1(L^{\prime})} &\leq C\sum_{X\in\sP^i(L)}\left(\| g_H - g_L\|_{L^1(\tball{x_X}{r_X}{\pi_0})}+\| g^{\ast}_H - g^{\ast}_L\|_{L^1(\tball{x_X}{r_X}{\pi_0})}\right) \\
    \overset{\eqref{E:CE:proof2}}&{\leq} C\bm_0^{1/2} \ell(L)^{m+3+\min\{\alpha,\gamma_{la}\}},    
\end{aligned}
\end{equation*}
which is exactly \eqref{I:P:CE:v}. Aiming at proving \eqref{I:P:CE:iii}, we take $J$ to be any ancestor of $H$ and $H_0=H,\ldots,H_{i_0}=J$ a chain of cubes such that $H_{i}$ is the father of $H_{i-1}$. Then, we have
\begin{equation}\label{E:CE:proof3}
    |D^2g_H(x_H) - D^2g_J(x_J) |\leq \sum_{i=0}^{i_0} |D^2g_{H_i}(x_{H_i}) - D^2g_{H_{i+1}}(x_{H_{i+1}}) | \overset{\eqref{E:Ck,alpha}}{\leq} C\bm_0^{1/2}\ell(J)^{1+\min\{\alpha,\gamma_{la}\}}.
\end{equation}
If we now take any random par of cubes $H,L\in\sP^j$, we choose $J_H$ and $J_L$ to be the first ancestors of $H$ and $L$, respectively, such that $J_H\cap J_L\neq\emptyset$. We then have
\begin{equation*}
\begin{aligned}
    |D^2g_H(x_H) - D^2g_L(x_L) | &\leq  |D^2g_{H}(x_{H}) - D^2g_{J_H}(x_{J_H}) | + |D^2g_{J_H}(x_{J_H}) - D^2g_{J_L}(x_{J_L}) | \\
    &\quad + |D^2g_{L}(x_{L}) - D^2g_{J_L}(x_{J_L}) | \\
    \overset{\eqref{E:CE:proof3},\eqref{E:Ck,alpha}}&{\leq} C\bm_0^{1/2}\max\{\ell(J_H),\ell(J_L)\}^{1+\min\{\alpha,\gamma_{la}\}}.    
\end{aligned}
\end{equation*}
By construction, one can check that $|x_H - x_L|\geq c_0\max\{\ell(J_H),\ell(J_L)\}$, the last inequality and this consideration finish the proof of \eqref{I:P:CE:iii}. Again, it is a straightforward adaptation to prove the statement in \eqref{I:P:CE:iii} for $g^{\ast}_H$ and $g^{\ast}_L$.
\end{proof}

\section{Normal approximation on the external center manifold}\label{S:NA}

Having now the existence of the external center manifold, \cref{T:ECM}, we can construct the $\cM^\ast$-normal approximation (\cref{T:NA}) which is a $Q$-valued function defined on $\cM^\ast$ whose graph induces a current that approximates $T$ in cylinders defined by Whitney regions ($\bp^{-1}(\cL)$ for $\cL$ being a Whitney region). Let us make this notion precise below.

\begin{defi}[Whitney regions]
Let $\cM^\ast$ be the external center manifold relative to $\pi_0$ and $(\bGamma,\sW)$ the Whitney decomposition associated to it. Defining $\bPhi^\ast(x):= (x, \bphi^\ast(x))$, we call $\bPhi^\ast(\bGamma)$ the \emph{contact set}. Moreover, to each $L\in\sW$, we set the Whitney region $\cL$ on $\cM^\ast$ to be the following set $\cL:= \bPhi^\ast(J\cap [-\frac{7}{2},\frac{7}{2}]^m)$ where $J$ is the cube concentric to $L$ with side-length equal to $17\ell(L)/16$.
\end{defi}

\begin{defi} 
Let $\cM^\ast$ be an external manifold as in \cref{T:ECM}, we define
\begin{itemize}
    \item $\bU^\ast:= \left \{ x\in \R^{m+n}: \exists ! y = \bp^\ast(x)\in\cM^\ast, |x-y|<1, \text{ and } (x-y)\perp\cM^\ast \right\},$

    \item $\bp^\ast:\bU^\ast\to\cM^\ast$ is the map defined by the previous bullet,

    \item $\partial_l\bU^\ast := (\bp^\ast)^{-1}(\partial\cM^\ast)$ is the lateral boundary of $\cM^\ast$.
\end{itemize}
\end{defi}

\begin{assump}\label{A:NA}
Under Assumptions \ref{A:CM} and \ref{A:CM:parameters}, we further possibly decrease $\varepsilon_{cm}>0$ in order to have that $\bp^\ast\in C^{1,\alpha}(\overline{\bU^\ast})$ and $(\bp^\ast)^{-1}(y) = y + \overline{\tball{0}{1}{T_p\cM^\ast}}$ for every $y\in\cM^\ast$.
\end{assump}

We are now in position to prove the following corollary of the constructions made so far.

\begin{cor}\label{C:2.2}
Under \cref{A:CM}, \cref{A:CM:parameters}, and \cref{A:NA}, we have
\begin{enumerate}[\upshape (i)]
    \item\label{C:2.2:i} $\spt(\partial(T\res\bU^\ast))\subset \partial_l\bU^\ast$, $\spt(T\res [-\frac{7}{2},\frac{7}{2}]\times\R^n)\subset\bU^\ast$, and $\bp^\ast(T\res\bU^\ast) = Q\a{\cM^\ast}$;
    \item\label{C:2.2:ii} $\spt(\langle T, \bp^\ast, \bPhi^\ast(q)\rangle) \subset\left\{y:|\bPhi^\ast(q)-y| \leq C \bm_0^{1 / 2 m} \ell(L)^{1+\gamma_h}\right\}$ for every $q \in L \in \sW$, where $C=C\left(\gamma_e, \gamma_h, M_0, N_0, C_e, C_h\right)$;
    \item\label{C:2.2:iii} $\langle T, \bp^\ast, p\rangle=Q \a{ p }$ for every $p \in \bPhi^\ast(\bGamma)$.
\end{enumerate}
\end{cor}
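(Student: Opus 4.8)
The plan is to deduce \Cref{C:2.2} from the analogous statement in \cite[Corollary 2.2]{DS4}, taking care that the external center manifold $\cM^\ast$, unlike the classical $\cM$, is not contained in $\Sigma$. The essential inputs are: the full projection property \eqref{A:projectionQ} together with its tilted counterpart (\Cref{L:ConstancyLemmaTilted}), the height bound \eqref{L:1HB}, the geometry of the Whitney decomposition (Propositions \ref{P:Whitney_decomp} and \ref{P:tilting_opt_planes}), the construction estimates (\Cref{P:CE}), and the $C^{3,\kappa}$-regularity of $\cM^\ast$ (\Cref{T:ECM}) which, via \Cref{A:NA}, guarantees that $\bp^\ast$ is a well-defined $C^{1,\alpha}$ retraction on the normal neighbourhood $\bU^\ast$ of fixed radius $1$.

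First I would establish the second inclusion in \eqref{C:2.2:i}, namely $\spt(T\res [-\frac{7}{2},\frac{7}{2}]^m\times\R^n)\subset\bU^\ast$. By \eqref{L:1HB} and the estimates (iv)--(v) of \Cref{P:tilting_opt_planes} the support of $T$ over the relevant region is within height $C_0\bm_0^{1/2m}$ of the planes $\pi_L$, while \Cref{P:CE}\eqref{I:P:CE:i},\eqref{I:P:CE:iv} show that $\cM^\ast$ stays within the same order of distance of those planes and has tangent planes close to them; hence, choosing $\varepsilon_{cm}$ small, every point of $\spt(T)$ over $[-\frac{7}{2},\frac{7}{2}]^m$ lies at normal distance $\ll 1$ from a unique point of $\cM^\ast$, i.e.\ in $\bU^\ast$. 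For the first assertion $\spt(\partial(T\res\bU^\ast))\subset\partial_l\bU^\ast$, I would use $\partial T\res\oball{6\sqrt m}=0$ from \eqref{A:no_bdr}: the boundary of the restriction can only be created where $\bU^\ast$ has a boundary, and since $\spt(T)$ does not reach the "top and bottom" faces of $\bU^\ast$ (again by the height bound), it can only exit through the lateral boundary $\partial_l\bU^\ast=(\bp^\ast)^{-1}(\partial\cM^\ast)$. The identity $\bp^\ast_\#(T\res\bU^\ast)=Q\a{\cM^\ast}$ then follows from the Constancy Lemma \cite[4.1.17]{Fed}: the pushforward is an integer multiple of $\a{\cM^\ast}$ on the connected open set $(-4,4)^m$-part, and the multiplicity is pinned to $Q$ by \eqref{A:projectionQ} composed with the fact that $\bp^\ast$ is a graphical retraction isotopic to $\bp_{\pi_0}$, or directly by evaluating at a contact point where \eqref{A:dens=Q} gives density $Q$.

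Next, for \eqref{C:2.2:ii} I would fix $q\in L\in\sW$ and analyse the slice $\langle T,\bp^\ast,\bPhi^\ast(q)\rangle$, which is supported in the fibre $\bPhi^\ast(q)+\overline{\tball{0}{1}{T_p\cM^\ast}}$. Passing to the tilted cylinder $\cyltilted{p_L}{36r_L}{\pi_L}$ and using (v) of \Cref{P:tilting_opt_planes}, $\spt(T)$ in this cylinder is confined to a slab of height $C_h\bm_0^{1/2m}\ell(L)^{1+\gamma_h}$ around $\pi_L$; since $\cM^\ast$ over the concentric cube differs from the affine plane $p_L+\pi_L$ by at most $C\bm_0^{1/2m}\ell(L)^{1+\gamma_h}$ in $C^0$ (the decay of the interpolation error in \Cref{P:CE}\eqref{I:P:CE:iv}, together with $|\pi_L-\pi_0|\le C_e\bm_0^{1/2}$), the normal fibre through $\bPhi^\ast(q)$ meets $\spt(T)$ only within the asserted ball of radius $C\bm_0^{1/2m}\ell(L)^{1+\gamma_h}$. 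Finally \eqref{C:2.2:iii}: for $p\in\bPhi^\ast(\bGamma)$ the point $p$ is a contact point, so by definition of $\bGamma$ no cube of $\sW$ covers the corresponding base point and, passing to a sequence of subdividing cubes shrinking to $x=\bp_{\pi_0}(p)$, the excess and height decay like $o(\ell^{2-2\gamma_e})$ and $o(\ell^{1+\gamma_h})$; this forces $T$ to coincide (as a slice) with $Q$ copies of the single point on $\cM^\ast$ over $x$, i.e.\ $\langle T,\bp^\ast,p\rangle=Q\a p$, which is exactly the statement proved for $\cM$ in \cite[Corollary 2.2]{DS4} and which transfers verbatim once one knows $\bp^\ast$ is the normal projection onto a $C^{3,\kappa}$ manifold trapping $\spt(T)$ at contact points.

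The main obstacle I expect is bookkeeping the fact that $\cM^\ast$ need not lie in $\Sigma$: in \cite{DS4} several of these estimates use $\cM\subset\Sigma$ to relate the normal bundle of $\cM$ to $\Sigma$ and to $\spt(T)$, whereas here one must separately control the gap between $\cM^\ast$ and $\Sigma$ (which is of order $\bm_0^{1/2}$ by the construction, since the interpolating graphs are built from $\Sigma$-valued data and their $\Sigma$-free smoothed versions) and verify that this gap does not spoil either the well-posedness of $\bp^\ast$ or the slice identities. Concretely, the delicate point is checking that $\bp^\ast$, restricted to $\spt(T)$, is still proper with the right degree — one should exhibit a homotopy (through normal projections onto the intermediate graphs $\mathrm{graph}(\varphi^\ast_j)$) from $\bp^\ast$ to $\bp_{\pi_0}$ that stays a submersion on $\spt(T)$, so that $\deg=Q$ is preserved; the smallness of $\bm_0$ and the uniform $C^{3,\kappa}$ bound on $\cM^\ast$ make this possible but it is the step requiring the most care.
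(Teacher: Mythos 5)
Your proposal is correct and follows essentially the same route as the paper: the only part the paper proves from scratch is the inclusion $\spt(T\res [-\frac{7}{2},\frac{7}{2}]^m\times\R^n)\subset\bU^\ast$, which it establishes exactly as you sketch --- combining the height bounds of \Cref{P:tilting_opt_planes} and \eqref{L:1HB} with the $C^0$ estimates on the interpolating functions from \Cref{T:ECM} and \Cref{P:CE} to show every such point of $\spt(T)$ lies within distance $<1$ of $\cM^\ast$, whence \Cref{A:NA} applies --- while all the remaining items are deferred to \cite[Corollary 2.2]{DS4}. Your closing worry about $\cM^\ast\not\subset\Sigma$ is moot for this particular corollary: as \Cref{R:U^star} notes, the tubular neighbourhood $\bU$ in \cite{DS4} is likewise not contained in $\Sigma$, so the DS4 arguments for the first and third claims of \eqref{C:2.2:i} and for \eqref{C:2.2:ii}--\eqref{C:2.2:iii} transfer verbatim.
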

\begin{rem}\label{R:U^star}
Notice that, the set $\bU^\ast$ is not necessarily contained in $\Sigma$ which is not an issue when paralleled to the approach in \cite[Corollary 2.2]{DS4}. Indeed, one can see that the set $\bU$ in \cite[Corollary 2.2]{DS4} is not guaranteed to be within $\Sigma$.
\begin{figure}[H]
    \centering
    \includegraphics{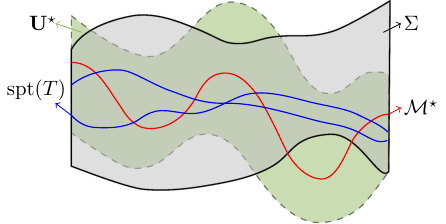}
\end{figure}
\end{rem}
\begin{proof}
The proof of the first and third affirmations in item \hyperref[C:2.2:i]{(i)}, items \hyperref[C:2.2:ii]{(ii)}, and \hyperref[C:2.2:iii]{(iii)}, are the same as presented in \cite[Corollary 2.2]{DS4}.  Let us prove $\spt(T\res [-\frac{7}{2},\frac{7}{2}]\times\R^n)\subset\bU^\ast$. Pick a point $p\in \spt(T\res [-\frac{7}{2},\frac{7}{2}]\times\R^n)$ and denote by $p^\prime$ the first $m$ coordinates of $\bp_{\pi_0}(p)$, then $p_0 := (p^\prime, \bphi^\ast(p^\prime))\in\cM^\ast$. If $p^{\prime}\in\bGamma$, it follows from item \hyperref[C:2.2:iii]{(iii)} that $p_0\in\bPhi^\ast(\bGamma)\cap\spt(T)$ and thus, by \eqref{L:1HB}, $| p - p_0| < 1$. If $p^\prime\in H$ where $H\in \sW^j$ for some $j$, we can proceed as follows:
\begin{equation*}
    |p-p_0| \leq |p - p_H| + |p_H - p_0| \overset{\textup{Prop. }\ref{P:tilting_opt_planes}\textup{ (iv)}}{\leq} 2^6 r_H + |p_H - p_0| = 2^6M_0\sqrt{m}\ell(L) + |p_H - p_0|.
\end{equation*}
Therefore, it remains to bound $|p_H - p_0|$. We show this bound below:
\begin{equation*}
\begin{aligned}
    |p_H - p_0| &\leq |p^\prime - x_H| + | \varphi^\ast_j(p^\prime) - y_H| + | \varphi^\ast_j(p^\prime) - \bphi^\ast(p^\prime)| \\
    \overset{p^\prime\in H}&{\leq} \sqrt{m}\ell(H) + | \varphi^\ast_j(p^\prime) - y_H| + | \varphi^\ast_j(p^\prime) - \bphi^\ast(p^\prime)| \\
    \overset{\textup{Thm. }\ref{T:ECM}\textup{ \hyperref[I:ECM:C0norm]{(A$^\ast$)}}}&{\leq} \sqrt{m}\ell(H) + C\bm_0^{1/2m} + |y_H| + | \varphi^\ast_j(p^\prime) - \bphi^\ast(p^\prime)| \\
    \overset{\textup{Prop. }\ref{P:tilting_opt_planes}\textup{ (iv)}}&{\leq} \sqrt{m}\ell(H) + C\bm_0^{1/2m} + C_h\bm_0^{1/2m}\ell(H) + | \varphi^\ast_j(p^\prime) - \bphi^\ast(p^\prime)| \\
    \overset{p_0\in\cM^\ast, \textup{Thm. }\ref{T:ECM}\textup{ \hyperref[I:ECM:convergence]{(A$^\ast$, D$^\ast$)}}}&{\leq} C\bm_0^{1/2m} < 1.
\end{aligned}
\end{equation*}
We proved that $|p - p_0|<1$ always holds true. Therefore, by \cref{A:NA}, we get that $p\in\bU^\ast$.
\end{proof}

A notion of Lipschitz approximation for nonlinear domains is given below with respect to $\cM^\ast$. Opportunely, we highlight the fact that the $\cM^\ast$-normal approximations are required to take values in $\Sigma$, see item (ii) in \cref{D:normalapprox}. Even though $F$ and $N$ will be defined on the external center manifold (which possibly lies outside $\Sigma$, in contrast with \cite{DS4}), their values are trapped in $\Sigma$ which will be important for computing inner and outer variations in what follows.  

\begin{defi}[$\cM^\ast$-normal approximation]\label{D:normalapprox}
An \emph{$\cM^\ast$-normal approximation of $T$} is given by a pair $(\cK, F)$ such that
\begin{enumerate}[\upshape (i)]
    \item $F:\cM^\ast\to\cA_Q(\bU^\ast)$ is Lipschitz w.r.t. the geodesic distance in $\cM^\ast$ and $F(p) = \sum_i \a{p+N_i(p)}$ where $N:\cM^\ast\to\cA_Q(\R^{m+n})$ is called \emph{the normal part of $F$},

    \item $N_i(p)\perp T_p\cM^\ast$ and $p+N_i(p)\in\Sigma$ for any $p\in\cM^\ast$ and $i$,

    \item $\cK\subset\cM^\ast$ is a closed set that contains $\bPhi^\ast(\bGamma\cap [-\frac{7}{2},\frac{7}{2}]^m)$ and $\bT_F\res (\bp^\ast)^{-1}(\cK) = T\res (\bp^\ast)^{-1}(\cK)$.
\end{enumerate}
\end{defi}
\begin{rem}
We point out that $N_i$ is not necessarily tangent to $\Sigma$. In fact, this is often the case, it can be explicitly seen in the proof of \cref{T:NA} when we construct the map $\Xi$. 
\end{rem}

We now state the existence and fine properties of an $\cM^\ast$-normal approximation. The strategy of the proofs of \cref{T:NA} and \cref{C:NA} follow \cite[Thm 2.4 and Cor 2.5]{DS4} and the final bounds are indeed the same. Nevertheless, given the difficulties of the $C^{2,\alpha}$ setting, some estimates have to be carefully carried out and the definition of the map $N$ is also subtler. In fact, the authors in \cite{DS4} rely on the fact that $\cM\subset\Sigma$ to construct a trivialization of class $C^{2,\alpha}$ (given that in their setting $\cM,\Sigma\in C^{3,\kappa}$) of the normal bundle of $\cM$. A crucial fact in their analysis is that the normal bundle of $\cM$ is a subspace of the tangent bundle of $\Sigma$ which is not guaranteed for the external center manifold.

\begin{thm}[Local estimates for the $\cM^\ast$-normal approximation]\label{T:NA}
Assume Assumptions \ref{A:CM}, \ref{A:CM:parameters}, and \ref{A:NA}, and let $\gamma_{na}:= \min\{\gamma_{la}/4,\alpha\}$. If $\varepsilon_{cm}=\varepsilon_{cm}(\gamma_e,\gamma_h, M_0,N_0, C_e, C_h)>0$ is sufficiently small, then there exists a constant $C=C\left(\gamma_e, \gamma_h, M_0, N_0, C_e, C_h\right)>0$ and an $\cM^\ast$-normal approximation $(\cK^\ast, F^\ast)$ such that, for every Whitney region $\cL$ associated to a cube $L \in \sW$, the following estimates hold true:
\begin{align}
    \Lip\left(\left.N^\ast\right|_{\cL}\right) \leq C \bm_0^{\gamma_{na}} \ell(L)^{\gamma_{na}}\text{ and }&\left\|\left.N^\ast\right|_{\cL}\right\|_{C^0} \leq C \bm_0^{1 / 2 m} \ell(L)^{1+\gamma_h} \label{E:T:NA:LipC0},\\
    \cH^m(\cL \setminus \cK^\ast)+ \|\bT_{F^\ast}-T\|\left((\bp^\ast)^{-1}(\cL)\right) &\leq C \bm_0^{1+\gamma_h} \ell(L)^{m+2+\gamma_h}, \\
    \int_{\cL}|D N^\ast|^2 &\leq C \bm_0 \ell(L)^{m+2-2\gamma_e}.
\end{align}
Moreover, for any $a>0$ and any Borel $\mathcal{V} \subset \cL$, we have
\begin{equation}\label{E:NA-control-average}
    \int_{\mathcal{V}}|\etaa \circ N^\ast| \leq C \bm_0\left(\ell(L)^{m+3+\gamma_e / 3}+a \ell(L)^{2+\gamma_{na}/ 2}|\mathcal{V}|\right)+\frac{C}{a} \int_{\mathcal{V}} \cG(N^\ast, Q \a{ \etaa \circ N^\ast })^{2+\gamma_{na}}.
\end{equation}
\end{thm}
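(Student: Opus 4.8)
The plan is to follow closely the scheme of \cite[Theorem 2.4 and Corollary 2.5]{DS4}, but replacing the center manifold $\cM$ by the external center manifold $\cM^\ast$, and being careful about two new features of our setting: (a) $\cM^\ast$ need not lie in $\Sigma$, so the normal bundle of $\cM^\ast$ is \emph{not} a subbundle of $T\Sigma$, and (b) we nonetheless want the approximation to take values in $\Sigma$, as required by \Cref{D:normalapprox}(ii). The starting point is \Cref{C:2.2}: for each Whitney cube $L$ we already control the support of the slices $\langle T,\bp^\ast,\cdot\rangle$ over $\cL$ and we know $\bp^\ast_\#(T\res\bU^\ast)=Q\a{\cM^\ast}$. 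First I would run the Lipschitz approximation of \cite[Thm 2.4]{DS3} in the tilted cylinders $\cyltilted{p_L}{r_L}{\pi_L}$ to get $Q$-valued maps $f_L$ with the excess/height decay carried by \Cref{P:tilting_opt_planes} and \Cref{C:tilting_opt_planes}; since those results only require $C^2$-regularity of $\Sigma$ (as stressed in the paper) this step is identical to \cite{DS4}. Then I would glue these local approximations across neighbouring cubes using a partition of unity subordinate to the Whitney decomposition, exactly as in \cite{DS4}, to produce a single $Q$-valued map and subtract off the correct average so that the glued object is the graph of a normal approximation over $\cM^\ast$; the interpolation inequalities and the coincidence/convergence properties from \Cref{T:ECM} and \Cref{P:CE} give the matching on overlaps.

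The genuinely new step is the construction of the map $N^\ast$ with values forced into $\Sigma$. Since $\cM^\ast\not\subset\Sigma$, the naive choice ``project the sheets of $f_L$ onto the fibre of the normal bundle of $\cM^\ast$'' does not land in $\Sigma$. Instead I would build, over a neighbourhood of the relevant portion of $\cM^\ast$, a $C^{1,\alpha}$ map $\Xi$ which to a point $p\in\cM^\ast$ and a small normal vector $v\perp T_p\cM^\ast$ associates a point of $\Sigma$; concretely, one writes $\Sigma$ as a graph of $\Psi_{p_H}$ over $\pi_H$ near $p_H$, and uses that $|\pi_L-T_p\cM^\ast|$ and $|\pi_L-\pi_0|$ are both $O(\bm_0^{1/2})$ (items (i)--(iii) of \Cref{P:tilting_opt_planes} together with \Cref{P:CE}(iv)), so that the normal space of $\cM^\ast$ is an $O(\bm_0^{1/2})$-perturbation of $\pi_H^\perp$, and the implicit function theorem (applicable with only $C^{1,\alpha}$ control on $D\Psi_H$, since $c(\Sigma)\le\varepsilon_{cm}^2$) yields a solvable system whose solution is the desired point of $\Sigma$. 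Composing the slice maps of $T$ with $\Xi$ and then with $(\bp^\ast)^{-1}$ fibre-wise produces $F^\ast(p)=\sum_i\a{p+N^\ast_i(p)}$ with $p+N^\ast_i(p)\in\Sigma$; one records here, as in the \Cref{D:normalapprox} remark, that $N^\ast_i$ is itself \emph{not} tangent to $\Sigma$, only normal to $\cM^\ast$. The closed set $\cK^\ast$ is then the usual ``good set'': $\bPhi^\ast(\bGamma\cap[-\tfrac72,\tfrac72]^m)$ together with the union over $L$ of the sets where $f_L$ and $T$ coincide, and \Cref{C:2.2}(iii) gives the required coincidence of $\bT_{F^\ast}$ and $T$ there.

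Once $N^\ast$ is defined, the five estimates are proven exactly along the lines of \cite[Thm 2.4, Cor 2.5]{DS4}: the Lipschitz and $C^0$ bounds \eqref{E:T:NA:LipC0} come from the Lipschitz-approximation estimates \eqref{E:P:PDE:boundDf}--\eqref{E:P:PDE:C0bound_lipapprox} (transported through $\Xi$, which is Lipschitz with constant $1+O(\bm_0^{1/2})$ so it does not affect the exponents), combined with the height bound \Cref{P:tilting_opt_planes}(iv)--(v) and the $C^0$ bound on $\bphi^\ast$ from \Cref{T:ECM}; the mass/excess comparison is the standard estimate $\cH^m(\cL\setminus\cK^\ast)+\|\bT_{F^\ast}-T\|((\bp^\ast)^{-1}(\cL))\le C\bm_0^{1+\gamma_h}\ell(L)^{m+2+\gamma_h}$ obtained by summing the ``bad set'' bounds over the cubes in the genealogy of $L$ and using \Cref{C:tilting_opt_planes}; the Dirichlet bound follows from \eqref{E:P:PDE:boundDir} and item (i) of \Cref{P:CE}. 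Finally, the averaged estimate \eqref{E:NA-control-average} is deduced as in \cite[Thm 2.4, (2.4)--(2.5)]{DS4} from the Taylor expansion of the mass of $\bT_{F^\ast}$ around $\cM^\ast$ and the bound \eqref{E:P:PDE-moreover} on $\bar h_{HL}-\etaa\circ\bar f_{HL}$, the only modification being that the expansion is carried out with $\cM^\ast$ of class $C^{3,\kappa}$ (\Cref{T:ECM}\hyperref[I:ECM:convergence]{(D$^\ast$)}) in place of $\cM$. I expect the main obstacle to be precisely the construction and regularity of the ``straightening into $\Sigma$'' map $\Xi$ and the verification that composing with it preserves all the decay exponents — in particular that the error introduced by the discrepancy between $N^\ast_i$ (normal to $\cM^\ast$) and the true normal-to-$\Sigma$ displacement is of lower order, so that it is absorbed into the stated bounds; everything downstream is then a bookkeeping adaptation of \cite{DS4}.
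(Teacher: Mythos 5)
Your proposal follows essentially the same route as the paper's proof: the local approximations are reparametrized over $\cM^\ast$ via \cite[Theorem 5.1]{DS2} (legitimate because $\cM^\ast$ is $C^{3,\kappa}$) and patched as in \cite{DS4}, and the genuinely new ingredient you identify --- a $C^{1,\alpha}$ map $\Xi$ on the normal bundle of $\cM^\ast$, built by the implicit function theorem from the graph description of $\Sigma$, which forces the sheets into $\Sigma$ while keeping them orthogonal to $\cM^\ast$ --- is exactly the paper's construction (there $\Xi$ is defined on $\varkappa_p=(T_p\cM^\ast)^\perp\cap T_{p^{\prime}}\Sigma$, a subbundle of dimension at least $\bar n$, with $p^{\prime}$ the nearest point of $\Sigma$). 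Two small corrections: the patching of the $Q$-valued maps $N_L$ is done by coincidence on the good sets plus Lipschitz extension, not by a partition of unity (which is meaningless for $\cA_Q$-valued maps and is only used for the single-valued interpolations in the center-manifold construction); and the ``lower-order'' error you flag in composing with $\Xi$ is controlled precisely by the estimate $|D_p\Xi(p,v)|\leq C_0|v|^{\alpha}$ (coming from $D_p\Xi(p,0)=0$ and the $C^{1,\alpha}$ regularity), which is absorbed into \eqref{E:T:NA:LipC0} exactly because $\gamma_{na}\leq\alpha$ by definition.
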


\begin{cor}[Global estimates for the $\cM^\ast$-normal approximation]\label{C:NA}
Under the Assumptions of \cref{T:NA}, $N^\ast$ is the map from \cref{T:NA}, and denote $\cM^\ast_0 := \bPhi^\ast([-7/2,7/2]^m)$. Then there exists a constant $C=C\left(\gamma_e, \gamma_h, M_0, N_0, C_e, C_h\right)>0$ such that 
\begin{align}
    \Lip(N^\ast |_{\cM^\ast_0}) \leq C\bm_0^{\gamma_{na}}\text{ and }\|N |_{\cM^\ast_0}\|_{C^0} &\leq C\bm_0^{1/2m}\\
    \cH^m(\cM^\ast_0\setminus \cK^\ast) + \|\bT_{F^\ast}-T\|\left((\bp^\ast)^{-1}(\cM^\ast_0)\right) &\leq C \bm_0^{1+\gamma_h}\\
    \int_{\cM^\ast_0}|D N^\ast|^2 &\leq C \bm_0 .
\end{align}
\end{cor}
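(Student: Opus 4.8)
## Proof proposal for Corollary \ref{C:NA}

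\textbf{Overall strategy.} The corollary is a direct globalization of the local estimates in Theorem \ref{T:NA}, obtained by summing the cube-by-cube bounds over the Whitney decomposition $\sW$ and controlling the contribution of the contact set $\bPhi^\ast(\bGamma)$ separately. The key point is that $\cM^\ast_0 = \bPhi^\ast([-7/2,7/2]^m)$ decomposes as the contact set $\bPhi^\ast(\bGamma \cap [-7/2,7/2]^m)$ together with the union of the Whitney regions $\cL$, $L \in \sW$; on the contact set $N^\ast$ is essentially trivial (by Corollary \ref{C:2.2} \hyperref[C:2.2:iii]{(iii)} and Definition \ref{D:normalapprox} (iii) the approximation coincides with $T$ and $N^\ast$ vanishes there up to the set where $Q$-points collapse), so only the Whitney regions contribute. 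The summation is made summable by the geometric-decay estimates \eqref{E:prop_W:emptygenerations} and the volume bounds for Whitney cubes recorded in \cite[Subsections 4.2 and 4.3]{DS4}, which give $\sum_{L \in \sW} \ell(L)^{m} \leq C_0$ and, more importantly, $\sum_{L \in \sW} \ell(L)^{m+\beta} \leq C_0$ for any $\beta > 0$ with a constant independent of the generation.

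\textbf{Step 1: the Lipschitz and $C^0$ bounds.} For the $C^0$ bound, every point of $\cM^\ast_0$ either lies in a Whitney region $\cL$, where $\|N^\ast|_{\cL}\|_{C^0} \leq C \bm_0^{1/2m}\ell(L)^{1+\gamma_h} \leq C\bm_0^{1/2m}$ by the second estimate in \eqref{E:T:NA:LipC0} together with $\ell(L)\le 1$, or in the contact set where $N^\ast$ vanishes; hence $\|N^\ast|_{\cM^\ast_0}\|_{C^0} \leq C\bm_0^{1/2m}$. For the Lipschitz bound one argues as in \cite[Corollary 2.5]{DS4}: given $p, q \in \cM^\ast_0$, if they lie in a common Whitney region (or in two neighbouring ones, using property \eqref{w3} of Definition \ref{D:WhitneyDecomposition} to compare side-lengths) the local estimate $\Lip(N^\ast|_{\cL}) \leq C\bm_0^{\gamma_{na}}\ell(L)^{\gamma_{na}} \leq C\bm_0^{\gamma_{na}}$ applies directly; otherwise one interpolates along a geodesic in $\cM^\ast$ joining $p$ to $q$, passing through the contact set where $N^\ast$ is controlled, and combines the local Lipschitz bounds with the $C^0$ bound on the overlaps. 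The output is $\Lip(N^\ast|_{\cM^\ast_0}) \leq C\bm_0^{\gamma_{na}}$.

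\textbf{Step 2: the measure-theoretic and Dirichlet bounds.} Since $\cM^\ast_0 \setminus \cK^\ast \subset \bigcup_{L\in\sW}(\cL \setminus \cK^\ast)$ (the contact set is contained in $\cK^\ast$ by Definition \ref{D:normalapprox} (iii)), and likewise $(\bp^\ast)^{-1}(\cM^\ast_0) \setminus \bigcup_L (\bp^\ast)^{-1}(\cL)$ carries no mass of $\bT_{F^\ast}-T$, we sum the second estimate of Theorem \ref{T:NA}:
\begin{equation*}
    \cH^m(\cM^\ast_0\setminus\cK^\ast) + \|\bT_{F^\ast}-T\|\big((\bp^\ast)^{-1}(\cM^\ast_0)\big) \leq \sum_{L\in\sW} C\bm_0^{1+\gamma_h}\ell(L)^{m+2+\gamma_h} \leq C\bm_0^{1+\gamma_h},
\end{equation*}
using $\ell(L)^{2+\gamma_h}\le 1$ and $\sum_{L\in\sW}\ell(L)^m \le C_0$. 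Similarly, summing the third estimate of Theorem \ref{T:NA} and using $\ell(L)^{2-2\gamma_e}\le 1$ gives $\int_{\cM^\ast_0}|DN^\ast|^2 \leq \sum_{L\in\sW}C\bm_0\ell(L)^{m+2-2\gamma_e} \leq C\bm_0$. One has to take a small amount of care that the Whitney regions $\cL$ have bounded overlap (each point of $\cM^\ast_0$ belongs to at most $C_0$ of them, since $\cL = \bPhi^\ast(J\cap[-7/2,7/2]^m)$ with $J$ only slightly larger than $L$), so that summing local integrals dominates the global integral up to a dimensional factor.

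\textbf{Main obstacle.} The only genuinely delicate point is the treatment of the contact set and the overlap structure of the Whitney regions — i.e. verifying that nothing is lost or double-counted when passing from the cube-by-cube estimates to the global one, and that $N^\ast$ really is controlled (indeed essentially zero, modulo the collapsing set already accounted for in $\cH^m(\cM^\ast_0\setminus\cK^\ast)$) on $\bPhi^\ast(\bGamma)$. This is identical in structure to the argument in \cite[Corollary 2.5]{DS4}, and since all the relevant local estimates (Theorem \ref{T:NA}) and Whitney combinatorics (\cite[Subsections 4.2--4.3]{DS4}) have been arranged to match the $C^{3,\kappa}$ setting verbatim, the proof of \cite[Corollary 2.5]{DS4} transfers without change; we therefore only indicate the summation and refer the reader there for the details.
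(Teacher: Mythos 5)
Your proposal is correct and follows exactly the route the paper takes: the paper gives no separate argument for Corollary~\ref{C:NA} beyond the remark that it follows \cite[Cor 2.5]{DS4}, i.e.\ precisely the summation of the local estimates of Theorem~\ref{T:NA} over the Whitney cubes (using $\sum_{L\in\sW}\ell(L)^m\leq C_0$ and the bounded overlap of the regions $\cL$), together with the vanishing of $N^\ast$ on the contact set. Your write-up supplies the details the paper omits, so no further comment is needed.
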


{We state now a simple linear algebra fact that will be used within the proof of the theorem.
\begin{lemma}\label{L:linear algebra}
Let $V$ and $W$ be vector spaces in $\R^N$ of dimension $i$ and $j$, respectively, with $i\leq j\leq N$ and denote $d_H(V,W)$ the Hausdorff distance between $V\cap\mathbb{S}^{N-1}$ and $W\cap\mathbb{S}^{N-1}$. There exists a positive dimensional constant $c_0$ such that, if $d_H(V, W) \leq c_0$, then $V^\perp + W = \R^N$.
\end{lemma}
\begin{proof}
Assume by way of contradiction that, for each $k\in\N$, there are $V_k$ and $W_k$ of dimension $i$ and $j$, respectively, such that $d_H(V_k,W_k)<1/k$ and the dimension of $V_k^\perp + W_k$ is strictly smaller than $N$. Thus, we have that $l_k:=\mathrm{dim}(V_k^\perp \cap W_k) > j-i\geq 0$. Take $\{\eta^k_1, \ldots, \eta^k_{j-i+1}, \ldots, \eta^k_{l_k}\}$ an orthonormal basis for $V_k^\perp \cap W_k$, $\{v^k_1,\ldots, v^k_i\}$ an orthonormal basis of $V_k$, and define $V_k^* := \mathrm{span}(v_1^k, \ldots, v_i^k, \eta^k_1, \ldots, \eta^k_{j-i})$ which has the same dimension as $W_k$. By definition of $V_k^*$, we have that $|\bp_{V_k^*} - \bp_{W_k}| = |\bp_{\mathrm{span}(\eta^k_{j-i+1}, \ldots,\eta^k_{l_k})}| \leq |\bp_{W_k} - \bp_{V_k\cap W_k}|$. Hence, we conclude the following:
\begin{equation*}
    \frac{1}{k} > d_H(V_k,W_k) \geq \sup_{x\in\oball{1}\cap W_k}|x - \bp_{V_k}x| = \sup_{x\in\oball{1}}|\bp_{W_k}x - \bp_{V_k\cap W_k}x| = |\bp_{W_k} - \bp_{V_k\cap W_k}| \geq |\bp_{V_k^*} - \bp_{W_k}|.
\end{equation*}
Taking $k$ to infinity, we denote $V$, $V^*$, and $W$ the limits of $V_k$, $V_k^*$, and $W_k$, respectively. By the displayed equation above, we have that $V^* = W$ which is in contradiction with the fact that $\eta^k_{j-i+1}$ converges to a nonzero vector $\eta_{j-i+1} \in W\cap (V^*)^\perp$.
\end{proof}}

\begin{proof}[Proof of \cref{T:NA}]
Running the same argument as in \cite[Subsection 6.2]{DS5} with $\cM^\ast\in C^{3,\kappa}$ (\cref{T:ECM}) in place of the center manifold $\cM$ which in our setting is only $C^{2,\alpha}$ (\cref{T:CM}), we can apply \cite[Theorem 5.1]{DS2} for $\cM^\ast$ to get, for each $L\in\sW^j$, $Q$-valued maps $N_L$ and $F_L$ satisfying the following properties:

\begin{itemize}
    \item $N_L:\cL^\prime\subset\cM^\ast\to \cA_Q(\R^{m+n})$ and $F_L:\cL^\prime\subset\cM^\ast\to \cA_Q(\bU^\ast)$, where $\cL^\prime := \bPhi^\ast(J)$ with $J$ being the cube concentric to $L$ with side-length $\ell(J) = 9\ell(L)/8$,

    \item $F_L(p) = \sum_i\a{p+ (N_L)_i(p)}$ and $(N_L)_i(p)\perp T_p\cM^\ast$ for every $p\in\cL^{\prime}$,

    \item and $\bG_{f_L}\res ((\bp^\ast)^{-1}(\cL^{\prime})) = \mathbf{T}_{F_L}\res ((\bp^\ast)^{-1}(\cL^{\prime}))$.
    
\end{itemize}

The functions $N_L$ and $F_L$ do not satisfy all the properties required in \cref{D:normalapprox}. For this reason, we have to extend their domains of definition and modify them in order to take values within $\Sigma$ and be orthogonal to $\cM^\ast$. Indeed, when we run the argument on \cite[page 537]{DS4}, we obtain, using the same notation of \cite{DS4}, functions $\hat{F}$ and $\hat{N}$ defined on the whole \emph{external} center manifold $\cM^\ast$ satisfying that $\hat{N}_i(p)\perp T_p\cM^\ast$ holds for every $p\in{\cM^\ast}$. Moreover, these functions also verify items (i) and (iii) of \cref{D:normalapprox}.

The next and last step is to modify $\hat{F}$ in order to ensure item (ii) of \cref{D:normalapprox}, i.e., that $p+N_i(p)\in\Sigma$ and $N_i(p)\perp T_p\cM^\ast$ for every $p\in\cM^\ast$. Such step has to be done cautiously, since the authors in \cite{DS4} use the fact that $\cM\subset\Sigma$, thus we perform the complete proof.

{Take $p\in\cM^\ast$ and $p^\prime = \mathfrak{P}(p)$ to be the nearest point to $p$ in $\Sigma$. Notice that, thanks to \Cref{A:nearest pt proj}, the nearest point projection to $\Sigma$ is $C^{1,\alpha}$ in $\oball{6\sqrt{m}}$ which together with \Cref{T:ECM} guarantee that $\cM^\ast$ is in the domain of $\mathfrak{P}$.}

{Since $\cM^\ast\in C^{3,\kappa}$, we can take an orthonormal $C^{2,\kappa}$-tangent bundle $\{t_1(p), \ldots, t_m(p)\}$ and also an orthonormal $C^{2,\kappa}$-normal bundle $\{\nu_1(p), \ldots, \nu_n(p)\}$ of $\cM^\ast$ at $p$, see for instance \cite[App. A]{DS2}.}

\begin{minipage}{\linewidth}
      \centering
      \begin{minipage}{0.45\linewidth}
          \begin{figure}[H]
              \includegraphics[width=\linewidth]{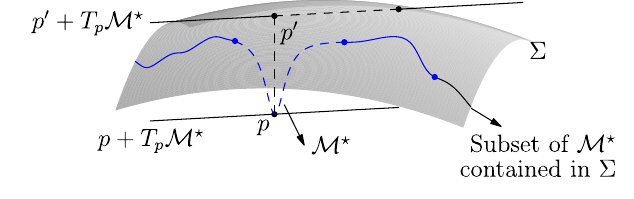}
          \end{figure}
      \end{minipage}
      \hspace{0.05\linewidth}
      \begin{minipage}{0.4\linewidth}
          \begin{figure}[H]
              \includegraphics[width=\linewidth]{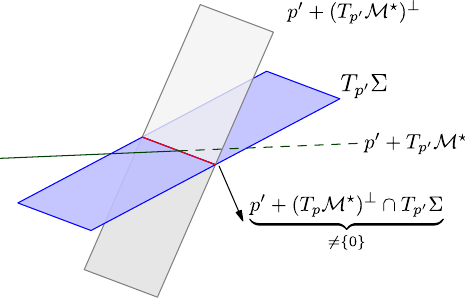}
          \end{figure}
      \end{minipage}
  \end{minipage}

{We now define $\varkappa_p:= (T_p\cM^\ast)^\perp\cap T_{p^\prime}\Sigma$ and claim that the dimension of $\varkappa_p$ is equal to $\bar{n}$ for any $p\in B_R$. Indeed, we define $\Phi(x) := (x,\bphi^\ast(x)) = p$ and recall that $T_p\cM^\ast = \mathrm{im}(D\Phi_x)$. Using $C^{1,\alpha}$-estimates on $\bphi^\ast$ at $0$ (namely \hyperref[I:ECM:HolderNorm]{(B$^\ast$)}) and \eqref{Sigma-close-to-pi_0}, we obtain that
\begin{equation*}
    d_H(T_p\cM^\ast, T_{p^\prime}\Sigma ) \leq d_H( T_p\cM^\ast, \pi_0) + d_H(T_{p^\prime}\Sigma, \pi_0) \leq C\bm_0^{1/2}.
\end{equation*}
So, as long as we (possibly) decrease $\varepsilon_{cm}>0$ a bit further, we can apply \Cref{L:linear algebra} to obtain that $(T_p\cM^\ast)^\perp + T_{p^\prime}\Sigma = \R^{m+n}$ and thus $\mathrm{dim}(\varkappa_p) = \bar{n}$.}

{Write $\Sigma\cap B_R =\{ G=0\}\cap B_R$ for some $G\in C^{2,\alpha} (\R^{m+n}, \R^{n-\bar{n}})$, it is clear that $T_{\mathfrak{P}(p)}\Sigma = \mathrm{Ker}(DG_{\mathfrak{P}(p)})$. We can then define the following map
\begin{equation*}   
\begin{aligned} 
    \mathcal{T} :\bigsqcup_{p\in\cM^\ast} \varkappa_p\times\R^{m+n} \to \R^{n-\bar{n}}\times \R^m \times \R^{\bar{n}} = \R^{m+n} \ \mbox{ given by }\\
    \mathcal{T}(p, v, \Xi) = \left(  G(\mathfrak{P}(p) + \Xi),   \langle \mathfrak{P}(p) + \Xi, t_1(p)\rangle, \ldots, \langle \mathfrak{P}(p) + \Xi, t_m(p) \rangle  , \bp_{\varkappa_p}(\Xi) - v\right). 
\end{aligned}\end{equation*}    
This map is $C^{1,\alpha}$, since it inherits the lowest regularity among the maps in its definition, namely $\mathfrak{P}$. We wish to apply the implicit function theorem to solve the system $\mathcal{T}(\cdot, \cdot, \cdot) = (0,0,0)$. To this end, we set the initial condition as follows:
\begin{equation*}   
\begin{aligned} 
    p_0 := (0,\bphi^\ast(0)), \ v_0 := -\bp_{\varkappa_p}(\mathfrak{P}(p_0)), \\ \mbox{ and } \Xi_0 := v_0 -\bp_{T_{p_0}\cM^\ast}(\mathfrak{P}(p_0)) - \bp_{(T_{p_0}\cM^\ast)^\perp\cap (T_{\mathfrak{P}(p_0)}\Sigma)^\perp}(\mathfrak{P}(p_0)).
\end{aligned}\end{equation*} 
Noticing that $\mathfrak{P}(p_0) + \Xi_0 = 0$ and recalling that the origin belongs to $\Sigma$, i.e., $G(0)=0$, it is a straightforward computation to check that $\mathcal{T}(p_0, v_0, \Xi_0) = (0,0,0)$ and also that the following holds 
\[ D_\Xi \mathcal{T}_{(p_0, v_0, \Xi_0)} = \begin{bmatrix}
    D G_{\mathfrak{P}(p_0) + \Xi_0} \\ t_1(p_0) \\ \ldots \\ t_m(p_0) \\ \bp_{\varkappa_p}
\end{bmatrix} = \begin{bmatrix}
    D G_0 \\ t_1(p_0) \\ \ldots \\ t_m(p_0) \\ \bp_{\varkappa_p}
\end{bmatrix},\]
where, by abuse of notation, we are using $\bp_{\varkappa_p}$ for the orthogonal projection and its associated matrix. We have then that 
\[
    v\in \mathrm{Ker}(D_\Xi \mathcal{T}_{(p_0, v_0, \Xi_0)})\Leftrightarrow v\in \varkappa_{p_0}^\perp\cap (T_{p_0}\cM^\ast)^\perp\cap \mathrm{Ker}(D G_0) = (T_{\mathfrak{P}(p_0)}\Sigma)^\perp\cap (T_{p_0}\cM^\ast)^\perp\cap T_0\Sigma .
\]
Now, since $\Sigma$ is regular and $c(\Sigma) \leq \varepsilon_{cm}$, we can guarantee that $d_H(T_0\Sigma, T_{\mathfrak{P}(p_0)}\Sigma) \leq c_0$ and applying \Cref{L:linear algebra}, we get that either $(T_{\mathfrak{P}(p_0)}\Sigma)^\perp + T_0\Sigma = \R^{m+n}$ or $T_{\mathfrak{P}(p_0)}\Sigma + (T_0\Sigma)^\perp = \R^{m+n}$. Both cases imply $(T_{\mathfrak{P}(p_0)}\Sigma)^\perp \cap T_0\Sigma = \{0\}$ and hence $\mathrm{Ker}(D_\Xi \mathcal{T}_{(p_0, v_0, \Xi_0)}) =\{0\}$. We are finally under the conditions to apply the implicit function theorem and then obtain $\Xi = \Xi(p,v)$ which is a $C^{1,\alpha}$-map and such that $\mathcal{T}(p,v,\Xi(p,v)) = (0,0,0)$, i.e., by definition of $\mathcal{T}$, it satisfies:
\begin{equation*}
    \Xi: \bigsqcup_{p\in\cM^\ast}\varkappa_p \to \R^{m+n}\text{ such that }\begin{cases}
        \bullet\quad p^{\prime} + \Xi(p,v)\text{ is the only point in }\Sigma\\
        \quad\quad\quad\quad\quad\text{ that is orthogonal to }T_p\cM^\ast,\\
        \bullet\quad \bp_{\varkappa_p}(\Xi(p,v)) = v.
    \end{cases}
\end{equation*}
}

We can now define the $\cM^\ast$-normal approximation (which does \emph{not} necessarily satisfy $N_i(p)\in T_{p^\prime}\Sigma$) as follows:
\begin{equation*}
    N: \cM^\ast \rightarrow \cA_Q(\R^{m+n})\text{ defined by }  \sum_{i=1}^Q\a{\Xi(p,\bp_{\varkappa_p}\left(\hat{N}_i(p)\right)}.
\end{equation*}

Since $p+\hat{N}_i(p)$ belongs to the support of $T$ and then to $\Sigma$, it is clear that $N(p) = \hat{N}(p)$ for any point in the good set $\cK^\ast$. To simplify the notation, denote $\Omega(p,v):= \Xi(p,\bp_{\varkappa_p}\left(v\right))$. As mentioned before, $\Omega$ is a $C^{1,\alpha}$ map. Therefore, we obtain 
\begin{equation}\label{6.8}
    |\Omega(p,v) - \Omega(p,u)| \leq C_0 |v-u|.
\end{equation}

Moreover, since $\Omega(p,0) = 0$ for every $p$, we have $D_p\Omega(p,0) = 0$, thus $|D_p\Omega(p,v)| \leq C_0 |v| ^{\alpha}$. Hence, we obtain that 
\begin{equation}\label{6.9}
    |\Omega(p,v) - \Omega(q,v)| \leq C_0 |v|^{\alpha} |p-q|.
\end{equation}

Fix two points $p,q\in\cL$ and assume that $\cG(\hat{N}(p), \hat{N}(q))^2 = \sum_i |\hat{N}_i(p) - \hat{N}_i(q)|^2$. We now have
\begin{equation*}
\begin{aligned}
    \cG(N(p), N(q))^2  &\leq 2\sum_i |\Omega(p, \hat{N}_i(p)) - \Omega(p,\hat{N}_i(q))|^2 + 2\sum_i |\Omega(p, \hat{N}_i(q)) - \Omega(q,\hat{N}_i(q))|^2 \\
    \overset{\eqref{6.8}, \eqref{6.9}}&{\leq} C_0 \cG(\hat{N}(p), \hat{N}(q))^2 + C \sum_i |\hat{N}_i(q)|^{2\alpha}|p-q|^2 \\
    &\leq C^2\bm_0^{2\gamma_{na}}\ell(L)^{2\gamma_{na}}|p-q|^2 + C^2\bm_0^{1/m}\ell(L)^{2\alpha(1+\gamma_h)}|p-q|^2. 
\end{aligned}
\end{equation*}

Thanks to the fact that $\gamma_{na}\leq \alpha$, the last inequality concludes the proof of \eqref{E:T:NA:LipC0}. The proof of the remaining estimates goes along the same lines as \cite[Thm 2.4]{DS4}. 
\end{proof}

\section{Frequency function and blow-up argument}\label{S:BU}

In this section, we show how to prove \cref{T:MAIN} using the approach in \cite{Alm} and \cite{DS5}. To this end, we set some notations and definitions. We start by setting the rescaling functions by $\iota_{p,r}(x) := \frac{x-p}{r}$ for any $p,x\in\R^{m+n}$ and $r>0$ and denoting the rescaled currents as $T_{p,r} := (\iota_{p,r})_{\sharp} T$.

\begin{assump}[Contradiction assumption]\label{A:FinalContradiction}
Assume that \cref{T:MAIN} is false. Precisely, assume that: there exists $m\geq 2, \bar{n}\geq 0, n\geq 1, \alpha>0, \Sigma$ and $T$ such that $\Sigma$ is an $C^{2,\alpha}$ embedded $(m+\bar{n})$-submanifold of $\R^{m+n}$, $T$ is an integral $m$-current in $\Sigma$ that minimizes area, and
\begin{equation*}
    \cH^{m-2+\tau}(\sing(T)) > 0\text{, for some }\tau >0.
\end{equation*}
\end{assump}

Under \cref{A:FinalContradiction}, we apply \cite[Prop 1.3]{DS5} to obtain the following contradiction sequence, which will allow us to derive a contradiction from \cref{A:FinalContradiction} and therefore conclude the proof of \cref{T:MAIN}.

\begin{lemma}[Contradiction sequence]\label{L:ContradictionSeq}
Assume \cref{A:FinalContradiction} and let $\bm_0 < \varepsilon_{bu}$ where $\varepsilon_{bu}\in (0, \varepsilon_{cm})$. There exist two real numbers $\tau, \eta>0$ and a sequence $r_k \downarrow 0$ such that:
\begin{enumerate}[\upshape (i)]
    \item $\Theta^m(T, 0 ) = Q$;

    \item\label{L:ContradictionSeq:Esmall} $\bE\left(T_{0, r_k}, \oball{6 \sqrt{m}}\right)$ goes to $0$ as $k$ goes to $+\infty$;

    \item $\lim _{k \to +\infty} \cH_{\infty}^{m-2+\tau}\left(\left\{p: \Theta^m(T_{0, r_k},p) = Q\right\} \cap \oball{1}\right)>\eta$; 

    \item $\cH^m\left(\left(\oball{1} \cap \spt\left(T_{0, r_k}\right)\right) \setminus \left\{p: \Theta^m(T_{0, r_k},p) = Q\right\}\right)>0$  for all $k \in \N$;

    \item $T_0\Sigma = \R^{m+\bar{n}}$, $\partial T\res \oball{6\sqrt{m}} = 0$;

    \item\label{E:2form small} $c(\Sigma\cap \oball{7\sqrt{m}})\leq\varepsilon_{bu}$;

    \item $\|T\|\left( \oball{6\sqrt{m}r}\right) \leq r^m \left( Q\omega_m (6\sqrt{m})^m + \varepsilon_{bu}\right)$ for any $r \in (0,1)$.
\end{enumerate}
\end{lemma}

We bring the readers' attention to the fact that the current $T$ of \cref{A:FinalContradiction} satisfies, thanks to \cref{L:ContradictionSeq}, all the requirements in \cref{A:CM} \emph{except} the smallness of the excess. However, \eqref{L:ContradictionSeq:Esmall} of \cref{L:ContradictionSeq} ensures that the excess of the rescalings of $T$ with respect to the sequence $\{r_k\}_k$ converges to $0$. This motivates us to define the \emph{intervals of flattening} that capture all radii $r\in(0,6\sqrt{m})$ for which the rescaled current $T_{0,r}$ is under \cref{A:FinalContradiction}. Thus, the external center manifold and the normal approximations can be constructed for each $T_{0,r}$.

\begin{defi}[Intervals of flattening and sequence of external center manifolds $\cM^\ast_j$]
Assume \cref{A:FinalContradiction}. We define the following set of radii:
\begin{equation*}
    \mathcal{R} := \left \{ r\in (0,1]: \bE( T_{0,r}, \oball{6\sqrt{m}}) \leq \varepsilon_{bu}^2\right\}.
\end{equation*}
If $\left\{s_k\right\} \subset \mathcal{R}$ and $s_k \uparrow s$, then $s \in \mathcal{R}$. Now, we cover $\mathcal{R}$ with a family of interval $\mathcal{F}=\left\{( s_j, t_j]\right\}_j$ called \emph{intervals of flattening}, defined as follows: $t_0:=\max \{t: t \in \mathcal{R}\}$, by induction, assume that $t_j$ is defined, and hence also $t_0>s_0 \geq t_1> s_1 \geq \ldots>s_{j-1} \geq t_j$. We also define the following objects:
\begin{itemize}
    \item $T_j:=T_{0, t_j}\res\oball{6 \sqrt{m}}, \Sigma_j:=\iota_{0, t_j}(\Sigma) \cap \oball{7 \sqrt{m}}$. Moreover, consider for each $j$ an orthonormal system of coordinates so that $\pi_0:= \R^m \times\{0\}$ is the optimal $m$-plane for the excess, i.e., $\bE\left(T_j, \oball{6 \sqrt{m}}, \pi_0\right)=\bE\left(T_j, \oball{6 \sqrt{m}}\right)$;

    \item Let $\cM^\ast_j$ be the external center manifold and $N^\ast_j$ the $\cM^\ast_j$-normal approximation constructed in \cref{T:ECM} and \cref{T:NA} applied to $T_j$ and $\Sigma_j$ with respect to the $m$-plane $\pi_0$. Notice that $T_j$ and $\Sigma_j$ are under \cref{A:CM}, \ref{A:CM:parameters}, and \ref{A:NA}, thanks to \cref{L:ContradictionSeq} and the definition of the intervals of flattening.
\end{itemize}
\end{defi}


With this, we consider the Whitney decomposition $\sW^{(j)}$ of $[-4,4]^m \subset \pi_0$ with respect to $T_j$, and we define
\begin{equation*}
    \quad s_j:=t_j \max \left(\left\{c_s^{-1} \ell(L): L \in \mathscr{W}^{(j)}\right.\right.\mbox{ and }\left.\left.c_s^{-1} \ell(L) \geq \operatorname{dist}(0, L)\right\} \cup\{0\}\right).
\end{equation*}
As in \cite{DS5}, one can see that $s_j / t_j<2^{-5}$ which then ensures that $(s_j, t_j]$ is a nontrivial interval. Next, if $s_j=0$, we stop the induction. Otherwise, we let $t_{j+1}$ be the largest element in $\mathcal{R} \cap\left[0, s_j\right]$ and repeat the procedure above.

\subsection{The frequency function}

\begin{defi}
Let $\phi: [0, +\infty)\to [0,1]$ be the piecewise linear function given by
\begin{equation*}
    \phi (r) := \begin{cases}
        1, &  r\in \left[0,\frac{1}{2}\right]\\
        2-2r, & r\in \left[\frac{1}{2}, 1\right] \\
        0, & r\in \left[1,+\infty) \right. .
    \end{cases}
\end{equation*}
Now we fix, for each $j$, $d_j$ to be the geodesic distance in $\cM_j^\ast$ between $p$ and $\bPhi^\ast_j(0)$. Then, we define
\begin{itemize}
    \item $\bD_j(r) := \int_{\cM^\ast_j}\phi\left( \frac{d_j(p)}{r}\right)|D N^\ast_j|^2(p)\mathrm{d}\cH^m(p)$;

    \item $\bH_j (r) := - \int_{\cM^\ast_j}\phi^\prime\left(\frac{d_j(p)}{r}\right)\frac{|N^\ast_j|^2(p)}{d_j(p)} \mathrm{d}\cH^m(p)$.
\end{itemize}
The \emph{frequency function} is then defined as $\bI_j (r) := \frac{r\bD_j(r)}{\bH_j(r)}$ whenever $\bH_j(r) >0$.
\end{defi}

\begin{rem}\label{R:inSigma}
A main ingredient to prove \cref{T:MAIN} is the monotonicity of the frequency function. To prove this, the authors in \cite[Thm 3.2]{DS5} take the inner and outer variations of $\bI_j$ and prove fine estimates for them. A crucial information in this analysis is the stationarity of $T$, more precisely, that the first variation of $T$ vanishes with respect to vector fields \emph{tangent} to $\Sigma$. To use this fact, we need to make sure that our normal approximations are supported in $\Sigma$, which we ensure in \cref{T:NA}.
Moreover, to carry out the computations in \cite{DS5}, it is also essential to have that $\spt(T)$ is a subset of $\bU$ (i.e., \cite[Corollary 2.2]{DS5}) which is also guaranteed in our setting with $\bU^\ast$ as we have shown in \Cref{C:2.2}, see also \Cref{R:U^star}.
\end{rem}
\begin{rem}\label{R:MeanCurvatureHolder}
We mention that one of the main reasons the authors in \cite{DS5} need the $C^{3,\alpha}$-regularity for the center manifold is to utilize the H\"{o}lder continuity of $D H_{\cM}$ in proving the monotonicity of the frequency function, where $H_{\cM}$ denotes the mean curvature of $\cM$. In the present setting, however, $\cM$ only possesses $C^{2,\alpha}$-regularity, which precludes us from using the derivative of $H_{\cM}$. Nevertheless, we have constructed $\cM^\ast$ so that normal approximations are defined over $\cM^\ast$, which has $C^{3,\alpha}$-regularity. This ensures that $D H_{\cM^\ast}$ is well-defined and exhibits H\"{o}lder continuity.
\end{rem}

Taking into consideration \cref{R:inSigma} and \cref{R:MeanCurvatureHolder}, we are therefore in position to apply the same machinery as in \cite{DS5} to obtain the monotonicity of the frequency function, i.e., \cref{T:FF}.

\begin{thm}[Main frequency estimate]\label{T:FF}
Provided $\varepsilon_{bu}$ is chosen small enough, there exists a geometric constanct $C_0 > 0$ such that 
\begin{equation*}
    \bI_j(a) \leq C_0 ( 1 + \bI_j(b))\text{, for every }[a,b]\subset \left[\frac{s_j}{t_j},3\right]\text{ such that }\bH_j|_{[a,b]} >0.
\end{equation*}
\end{thm}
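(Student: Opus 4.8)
```latex
\noindent\textbf{Plan of proof.}
The strategy is to import the proof of \cite[Theorem 3.2]{DS5} essentially verbatim, with the center manifold $\cM$ systematically replaced by the external center manifold $\cM^\ast_j$, and to track carefully that every place where \cite{DS5} invoked the $C^{3,\alpha}$ regularity of $\cM$ or the containment $\cM\subset\Sigma$ can be replaced by a corresponding fact established earlier in this paper. Concretely, the argument runs through the following steps. First, one derives the first and second variation (inner and outer) identities for the Dirichlet energy $\bD_j$ and the boundary term $\bH_j$ along $\cM^\ast_j$: taking the vector field generated by the radial rescaling on $\cM^\ast_j$ composed with the cutoff $\phi(d_j/r)$, and using that $\bT_{F^\ast_j}$ is close to $T_j$ (\cref{T:NA}, \cref{C:NA}) together with the fact that $T_j$ is stationary in $\Sigma_j$ — which is legitimate precisely because $N^\ast_j$ takes values in $\Sigma_j$ (item (ii) of \cref{D:normalapprox}) and because $\spt(T_j)\subset\bU^\ast_j$ (\cref{C:2.2}). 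This yields, as in \cite[Sections 3--4]{DS5}, expressions of the form $\frac{d}{dr}\log\bH_j$, $\frac{d}{dr}\log\bD_j$ with error terms controlled by the excess $\bm_0$, the Lipschitz and $C^0$ bounds of $N^\ast_j$, and the geometry of $\cM^\ast_j$.

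\smallskip
\noindent The heart of the matter is controlling these error terms so that they are integrable in $\log r$ with a superlinear (rather than merely linear) power, which is what produces the almost-monotonicity and hence the bound $\bI_j(a)\le C_0(1+\bI_j(b))$. The error terms split into: (a) terms measuring the discrepancy between $T_j$ and the graph $\bT_{F^\ast_j}$, handled by the mass and energy estimates of \cref{T:NA} with the decay exponents $m+2+\gamma_h$ and $m+2-2\gamma_e$; (b) terms coming from the splitting-before-tilting / Whitney estimates, controlled via \cref{P:tilting_opt_planes}, \cref{C:tilting_opt_planes} and the stopping conditions; and (c) the geometric curvature terms of the ambient submanifold over which $N^\ast_j$ lives. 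For (c), the decisive point — emphasized in \cref{R:MeanCurvatureHolder} — is that although $\cM_j$ is only $C^{2,\alpha}$, the \emph{external} center manifold $\cM^\ast_j$ is $C^{3,\kappa}$ by \cref{T:ECM}, so $H_{\cM^\ast_j}$ has a H\"older-continuous derivative with the quantitative bound $\|D^3\varphi^\ast_j\|_{C^{0,\kappa}}\le C\bm_0^{1/2}$ from \hyperref[I:ECM:HolderNorm]{(B$^\ast$)}; this is exactly the regularity used in \cite{DS5} to estimate the variation of the mean curvature terms, so those estimates transfer with no loss. One also uses \eqref{E:NA-control-average} to absorb the contribution of $\etaa\circ N^\ast_j$ (the ``average'' part, which is nonzero because $\cM^\ast_j$ need not be contained in $\Sigma_j$) into the same error hierarchy, replacing the role played in \cite{DS5} by the fact that $\etaa\circ N$ was tangent to $\cM$.

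\smallskip
\noindent With the variational identities and error estimates in hand, the conclusion follows as in \cite[Section 5]{DS5}: one shows that $\frac{d}{dr}\log\bI_j(r)\ge -C_0 - C_0\bI_j(r)$ (in the distributional sense) on the interval $[s_j/t_j,3]$ wherever $\bH_j>0$, which is a Gr\"onwall-type differential inequality for $\bI_j$; integrating from $a$ to $b$ and using that $\bI_j$ cannot blow up on a set where $\bH_j$ stays positive (again by the estimates of \cref{T:NA}) gives $\bI_j(a)\le C_0(1+\bI_j(b))$ after possibly enlarging $C_0$. The restriction $[a,b]\subset[s_j/t_j,3]$ is what keeps all the Whitney-scale quantities comparable and the normal approximation valid.

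\smallskip
\noindent I expect the main obstacle to be item (c) together with the average-part contribution: in \cite{DS5} many cancellations exploit that the normal bundle of $\cM$ sits inside $T\Sigma$, so that $N$ and $\etaa\circ N$ are genuinely tangent to $\Sigma$ and various inner products vanish or simplify. Here $N^\ast_j$ is only required to take \emph{values} in $\Sigma_j$, and it is not tangent to $\Sigma_j$ (see the remark after \cref{D:normalapprox}); one must therefore re-examine each such cancellation and either recover it from the $C^{3,\kappa}$ bound on $\cM^\ast_j$, from \eqref{E:NA-control-average}, or from the $C^{1,\alpha}$ trivialization map $\Omega$ constructed in the proof of \cref{T:NA} (whose estimates \eqref{6.8}--\eqref{6.9} quantify exactly how far $N^\ast_j$ deviates from being tangent). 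Making sure no error term acquires a power of $\ell(L)$ that is too low — i.e., that the delicate balance $\gamma_{la}-2\gamma_e<0$ exploited in \cref{P:PDE}, and the analogous inequalities for $\gamma_h,\gamma_e,\gamma_{na}$, are respected throughout — is where the bookkeeping is most dangerous, but it is dictated entirely by the hierarchy of parameters fixed in \cref{A:CM:parameters}.
```
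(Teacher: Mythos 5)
Your proposal takes essentially the same approach as the paper, which in fact gives no detailed proof of \cref{T:FF} at all: it simply observes (in \cref{R:inSigma} and \cref{R:MeanCurvatureHolder}) that the machinery of \cite[Thm 3.2]{DS5} transfers verbatim once one knows that $N^\ast_j$ ranges in $\Sigma_j$, that $\spt(T)\subset\bU^\ast$ (\cref{C:2.2}), and that $\cM^\ast_j$ is $C^{3,\kappa}$ so $DH_{\cM^\ast_j}$ is H\"older continuous. Your write-up identifies exactly these three ingredients and is, if anything, more explicit than the paper about where the remaining bookkeeping (the non-tangency of $N^\ast_j$ to $\Sigma_j$ and the exponent hierarchy) must be checked.
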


\subsection{Blow-up argument}

We now show how to obtain a function at the limit that provides us the desired contradiction to conclude \Cref{T:MAIN}. The proof of \cref{T:FinalBU} resembles that of \cite[Thm 6.2]{DS5}, however, it has to be subtly changed when dealing with the external center manifolds $\cM^\ast_j$ and the $\cM^\ast_j$-normal approximations $N^\ast_j$ since the arguments in \cite{DS5} strongly rely on the fact that $\cM\subset\Sigma$ and the $C^{3,\alpha}$-regularity of $\Sigma$ and $\cM$. 

\begin{thm}[Final blow-up]\label{T:FinalBU}
The maps $N_k^{\ast,b}$ strongly converge, up to subsequences, in $L^2(B_{3/2})$ to a function $N_\infty^{\ast,b}\in W^{1,2}(B_{3/2}, \cA_Q(\{0\}\times \R^{\bar{n}} \times \{0\}))$ 
that satisfies the following:
\begin{enumerate}[\upshape (i)]
    \item $N_{\infty}^{\ast,b}$ is a minimizer of the Dirichlet energy in $B_t$ for any $t\in (5/3,3/2)$;

    \item $\|N_{\infty}^{\ast,b}\|_{L^2(B_{3/2})} = 1$;

    \item $\etaa\circ N_\infty^{\ast,b} \equiv 0$.
\end{enumerate}
\end{thm}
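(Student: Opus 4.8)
The plan is to follow the classical blow-up scheme of De Lellis--Spadaro, adapted to the external center manifold, in four stages: (1) define the correctly rescaled maps $N_k^{\ast,b}$; (2) extract a weak $W^{1,2}$ limit with the right target space; (3) promote weak to strong convergence using the frequency estimate; (4) verify the three asserted properties of the limit.

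First I would set up the rescaling. For each interval of flattening $(s_j,t_j]$ and each radius $r\in[s_j/t_j,3]$ one has the normalized maps
\begin{equation*}
    N_k^{\ast,b}(x) := \frac{r_k\, N^\ast_{j(k)}\big(\bPhi^\ast_{j(k)}(r_k x)\big)}{\bH_{j(k)}(r_k)^{1/2}},
\end{equation*}
built out of the contradiction sequence $r_k\downarrow 0$ of \cref{L:ContradictionSeq}, where $j(k)$ is the index of the interval of flattening containing $r_k$ and $r_k$ is replaced by the appropriate ratio inside that interval. By construction $\|N_k^{\ast,b}\|_{L^2(B_{3/2})}=1$ (this normalization is exactly what $\bH_j$ measures), giving property (ii) for free once convergence is established, and the Dirichlet-energy bound $\int_{B_{3/2}}|DN_k^{\ast,b}|^2 \leq C(1+\bI_{j(k)})$ follows from the frequency estimate \cref{T:FF} and the definition $\bI_j = r\bD_j/\bH_j$. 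This uniform $W^{1,2}$ bound yields, via Rellich for $Q$-valued Sobolev maps (\cite{DS1}), strong $L^2$ and weak $W^{1,2}$ convergence along a subsequence to some $N_\infty^{\ast,b}$. The fact that the target is $\{0\}\times\R^{\bar n}\times\{0\}$ comes from two facts combined: the $C^0$ and Lipschitz estimates of \cref{T:NA} force $N^\ast_j$ to collapse onto the tangent-to-$\Sigma$, normal-to-$\cM^\ast$ directions in the limit, and the excess going to zero (\eqref{L:ContradictionSeq:Esmall} of \cref{L:ContradictionSeq}) pins down the splitting of $\R^{m+n}$ so that the only surviving directions are the $\bar n$ directions tangent to $\Sigma$ but orthogonal to the limiting plane $\pi_0$.

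Next I would upgrade to strong $W^{1,2}$ convergence in $B_t$ for $t<3/2$. Here the argument is the standard one: using the almost-monotonicity from \cref{T:FF} one shows the limit frequency $\bI_\infty(r) := \lim_k \bI_{j(k)}(r)$ is finite and monotone, hence the Dirichlet energies $\bD_k$ do not concentrate on spheres; then a comparison/harmonic-competitor argument (the analogue of \cite[Section 6]{DS5}) shows that any loss of Dirichlet energy in the limit would contradict the almost-minimality of $N^\ast_k$ recorded by the first-variation estimates. Property (i), Dir-minimality of $N_\infty^{\ast,b}$, drops out of the same comparison argument: competitors for $N_\infty^{\ast,b}$ can be lifted back to competitors for $T_{j(k)}$ through the external center manifold and the $\cM^\ast_j$-normal approximation, and minimality of $T$ passes to the limit. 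The vanishing of the average, property (iii), follows from \eqref{E:NA-control-average} of \cref{T:NA}: choosing the free parameter $a$ appropriately and rescaling, the $L^1$-norm of $\etaa\circ N^\ast_j$ is controlled by a superlinear power of $\ell(L)$ plus a term quadratic in $N^\ast_j$, and after normalization by $\bH_j^{1/2}$ and passing to the limit both contributions vanish, since $\etaa\circ N_\infty^{\ast,b}$ would otherwise have to be a nontrivial $Q$-fold average with positive $L^2$-mass, contradicting the bound.

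The main obstacle I anticipate is \emph{step (1)--(2), namely making the rescaling and the target-space identification clean in the external center manifold setting}. In \cite{DS5} one uses heavily that $\cM\subset\Sigma$ so that the normal bundle of $\cM$ lies in $T\Sigma$ and the blow-up lands in a fixed $\bar n$-dimensional space determined intrinsically by $\Sigma$; here $\cM^\ast$ need not be contained in $\Sigma$, so the splitting $\R^{m+n} = \pi_0 \oplus (T_p\cM^\ast)^\perp$ and the further splitting of $(T_p\cM^\ast)^\perp$ into the $\varkappa_p$-part (tangent to $\Sigma$) and its complement must be controlled uniformly as $k\to\infty$ using only the $C^{3,\kappa}$ bounds on $\cM^\ast$ from \cref{T:ECM}, the $C^{1,\alpha}$ trivialization built in the proof of \cref{T:NA}, and the fact that $N^\ast_k$ takes values in $\Sigma$. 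Showing that the "bad" component of $N^\ast_k$ (the part of $\hat N_i$ orthogonal to $\varkappa_p$, introduced by the map $\Xi$) is of higher order and hence disappears in the blow-up — so that $N_\infty^{\ast,b}$ genuinely takes values in $\{0\}\times\R^{\bar n}\times\{0\}$ — is the delicate point, and it is exactly where the $C^{2,\alpha}$-only regularity of $\Sigma$ forces the more careful estimates \eqref{6.8}--\eqref{6.9} to be used quantitatively rather than merely qualitatively.
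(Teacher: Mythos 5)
Your overall scheme (normalize, extract a weak $W^{1,2}$ limit by Rellich for $Q$-valued maps, read off (ii) from the normalization, get (iii) from \eqref{E:NA-control-average}, and prove (i) by lifting competitors back to the current) matches the paper's strategy in outline, and your treatment of (ii) and (iii) is essentially the paper's Step~1 (with the caveat that the paper normalizes by $\bh^\ast_k=\|\overline{N}^\ast_k\|_{L^2(\mathcal{B}_{3/2})}$ rather than by $\bH_{j(k)}^{1/2}$, which is what actually makes (ii) immediate). However, you have located the main difficulty in the wrong place, and there is a genuine gap in your proof of (i). The target-space identification that you single out as the delicate point is dispatched quickly in the paper: one compares $\overline{N}^\ast_k$ with its tangential part and uses $\cG(\overline{N}^\ast_k,\overline{N}^{\ast,T}_k)\le C_0|\overline{N}^\ast_k|^2$ together with \eqref{Eq:(7.1)} and the $C^{2,\alpha}$-convergence of $\overline{\Sigma}_k$ to $\R^{m+\bar n}\times\{0\}$; no further quantitative use of \eqref{6.8}--\eqref{6.9} is needed there.

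The real obstruction is in the competitor construction for Dir-minimality. To turn a better competitor for $N^{\ast,b}_\infty$ into a competitor current one must (a) describe $\overline{\Sigma}_k$ as a graph $v^{\perp}=\psi^\ast_k(p,v^T)$ over the normal bundle of $\overline{\cM}^\ast_k$ via a smooth frame, and (b) run the Taylor expansion of the mass of the graph current (\cite[Th.~3.2]{DS2}) with controlled curvature terms $\|\overline{H}\|_{C^0}$, $\|\overline{A}\|_{C^0}$. Since $\Sigma$ is only $C^{2,\alpha}$ and $\cM^\ast$ only $C^{3,\kappa}$, the projection of $\cM^\ast$ onto $\Sigma$ is only $C^{2,\kappa}$ and $\psi^\ast_k$ only $C^{1,\kappa}$ --- not enough regularity to establish the estimates of type \eqref{Eq:C2AlphaEstimates(7.5)1} on $D_x\psi^\ast$ and the energy comparison \eqref{Eq:7.6}, nor to carry out the competitor construction of \cite[Section 7.3]{DS5} directly. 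The paper's key new idea, entirely absent from your proposal, is to mollify \emph{both} $\Sigma_{j(k)}$ and $\cM^\ast_{j(k)}$ into $C^\infty$ objects $\overline{\Sigma}_{k,\varepsilon}$ and $\overline{\cM}^\ast_{k,\varepsilon}$, build the frames $\nu^{k,\varepsilon}_j,\varpi^{k,\varepsilon}_j$, the maps $\psi^\ast_{k,\varepsilon}$, and the competitor current $\widetilde{T}_{k,\varepsilon}$ at the smoothed level with estimates uniform in $\varepsilon$ and $k$, and then pass to the limit $\varepsilon\to0$ in the mass comparison via the convergence statement \eqref{Eq:MassConvergenceOfMollifiedCurrents}. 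Without this smoothing step (or a substitute for it), your ``lift competitors through the external center manifold'' step does not go through, precisely because of the low regularity the whole paper is designed to handle.
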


\begin{proof}[Proof of \cref{T:MAIN}]
The approach implemented in \cite[Pf of Thm 0.3]{DS5} can be carried out along the very same lines to get \cref{T:MAIN} from the above properties of the multivalued limit function $N_{\infty}^{\ast,b}$. For this reason, we omit the proof here.
\end{proof}

To prove \Cref{T:FinalBU}, we need to introduce some notation as follows. Consider a current $T$ satisfying Assumption \ref{A:FinalContradiction}. As in \cite[Proposition 2.2]{DS5}, for each radius \( r_k \) produced by Lemma \ref{L:ContradictionSeq}, there is an interval of flattening such that \( r_k\in (s_{j(k)}, t_{j(k)}] \). We {define \( \bar{s}_k \) to be such that $ \bar{s}_k/t_{j(k)}$ is the radius} given by the Reverse Sobolev inequality \cite[Corollary 5.3]{DS5} applied to \( r = r_k{/t_{j(k)}} \). {Notice that \(3r_k/2 \leq \bar{s}_k \leq 3r_k \)}. We then put \( \bar{r}_k := {\frac{2\bar{s}_k}{3t_{j(k)}}} \) and rescale and translate our objects as below:
\begin{itemize}
    \item \( \bar{T}_k:= (\iota_{0,\bar{r}_k})_\sharp T_{j(k)} = ((\iota_{0,\bar{r}_k t_{j(k)}})) T)\res\mathbf{B}_{\frac{6\sqrt{m}}{\bar{r}_k}}, \overline{\Sigma}_k:= \iota_{0,\bar{r}_k}(\Sigma_{j(k)}) \) and \( \overline{\cM}^\ast_k := \iota_{0,\bar{r}_k}(\cM^\ast_{j(k)})\),
    
    \item \( \overline{N}^\ast_k : \overline{\cM}^\ast_k \to \R^{m+n} \) are the rescaled \( \overline{\cM}^\ast_k \)-normal approximations given by
    \[
    \overline{N}^\ast_k(p) = \bar{r}_k^{-1} N^\ast_{j(k)}(\bar{r}_k p).
    \]
\end{itemize}
We can assume \( T_0\Sigma = \R^{m+\bar{n}} \times \{0\} \), thus the ambient manifolds \( \overline{\Sigma}_k \) converge to \( \R^{m+\bar{n}} \times \{0\} \) locally in \( C^{2,\alpha} \). Furthermore, since \( \frac12 < \frac{r_k}{\bar{r}_kt_{j(k)}} < 1 \), Lemma \ref{L:ContradictionSeq} implies  that
\begin{equation*}
    \bE(\bar{T}_k, \oball{1/2}) \leq C \bE(T, \oball{r_k}) \to 0.
\end{equation*}
Without loss of generality, we can assume that $\bar{T}_k$  locally converges to $Q\a{\pi_0}$. Moreover, from Lemma \ref{L:ContradictionSeq}, it follows that
\begin{equation}
    \cH_\infty^{m+2+\tau}(D_Q(\bar{T}_k) \setminus\mathbf{B}_1) \leq C_0 r_k^{-(m+2+\tau)}\cH_\infty^{m+2+\tau}(D_Q(T) \setminus\mathbf{B}_{r_k})\ge\eta>0,
\end{equation}
where $C_0$ is a geometric constant. As in \cite[Lemma 6.1]{DS5}, we can show that $\overline{\cM}^\ast_k$ locally converge in $C^{3,\kappa/2}$ to the $m$-plane $\pi_0$. We thus define the blow-up maps $N_k^{\ast, b}:\mathrm{B}_3\subset \pi_0=\R^m\to\cA_Q(\R^{m+n})$:
\begin{equation}
  N_k^{\ast, b}(x):=\frac{\bar{N}^\ast(\mathbf{e}_k(x))}{\bh^\ast_k}, \ \bh^\ast_k:=\left\|\bar{N}^\ast_k\right\|_{L^2\left(\mathcal{B}_{3/2}\right)}, \mbox{ and }\bar{p}^\ast_k:=\frac{\mathbf{\Phi}^\ast_{j(k)}(0)}{\bar{r}_k},
\end{equation}
where $\mathbf{e}_k:=exp_{\bar{p}^\ast_k}$ denotes the exponential map of $\overline{\cM}^\ast_k$ at $\overline{p}_k^\ast$. Henceforth, we assume, without loss of generality, that we have applied a suitable rotation to each $\bar{T}_k$ so that the tangent plane $T_{\bar{p}^\ast_k}\overline{\cM}^\ast_k$ coincides with $\R^m\times\{0\}$.

To prove \Cref{T:FinalBU}, we will proceed as follows. We approximate, by smoothing, both $\overline{\cM}^\ast_k$ and $\overline{\Sigma}_k$ with suitable $C^{\infty}$-submanifolds of the ambient space $\R^{m+n}$ which we denote by $\overline{\cM}^\ast_{k,\varepsilon}$ and $\overline{\Sigma}_{k,\varepsilon}$, respectively. Using these smoothened approximations, we have the existence of a well-defined projection $\mathfrak{P}_{k,\varepsilon} = \mathfrak{P}_{\overline{\Sigma}_{k,\varepsilon},\overline{\cM}^\ast_{k,\varepsilon}}$ satisfying:
\begin{equation*}
    \mathfrak{P}_{k,\varepsilon}(\overline{\cM}^\ast_{k,\varepsilon})  \subseteq\overline{\Sigma}_{k,\varepsilon}\mbox{ and }\mathfrak{P}_{k,\varepsilon}\mbox{ is of class }C^{\infty}(\mathbf{B}_{6\sqrt{m}}).
\end{equation*} 
From this, we can run similar constructions to those in \cite[Section 7.2]{DS5} to the pair $\left(\overline{\cM}^\ast_{k,\varepsilon},\overline{\Sigma}_{k,\varepsilon}\right)$ where we need to factorize the normal approximation $N^\ast_j$ through the projection above and let $\varepsilon\to 0$ and $k\to+\infty$ to conclude the proof. The subtleties of controlling all the error terms involved with this smoothing procedure are detailed below.

\begin{proof}[Proof of \Cref{T:FinalBU}]

We divide the proof into some steps.

\emph{\underline{Step 1}: Proof of the existence of $N_\infty^{\ast,b}\in W^{1,2}(B_{3/2}, \cA_Q(\{0\}\times \R^{\bar{n}} \times \{0\}))$, (ii), (iii), and $L^2$-strong convergence.}

Without loss of generality, we translate the manifolds $\overline{\cM}^\ast_k$ so that the rescaled points $\overline{p}^\ast_k=\bar{r}_k^{-1} \bPhi^\ast_{j(k)}(0)$ all coincide with the origin $0_{\R^{m+n}}$. Let us define 
\begin{equation*}
    \overline{F}^\ast_k: \mathcal{B}^\ast_{3/2} \subset \overline{\cM}^\ast_k \rightarrow \cA_Q\left(\R^{m+n}\right) \mbox{ and }\overline{F}^\ast_k(x):=\sum_i \a{ x+\left(\overline{N}_k^\ast\right)_i(x) }.
\end{equation*} 
To simplify the notation, set ${\bp}_k^\ast:=\bp_{\overline{\cM}^\ast_k}$.
We start by showing the existence of a suitable exponent $\gamma>0$ such that
\begin{align}
    \operatorname{Lip}\left(\left.\overline{N}_k^\ast\right|_{\mathcal{B}^\ast_{3 / 2}}\right) & \leq  C{\bh_k^\ast}^{\gamma_{na}}, \quad \left\|\overline{N}_k^\ast\right\|_{C^0\left(\mathcal{B}^\ast_{3 / 2}\right)} \leq C\left({\bm_{0,j(k)}} \bar{r}_k\right)^{\gamma_{na}}, \label{Eq:(7.1)} \\
    \mathbf{M}\left((\mathbf{T}_{\overline{F}^\ast_{ k }}-\overline{T}_{k })\res{\bp^\ast_k}^{-1}\left(\mathcal{B}^\ast_{3/2}\right)\right) & \leq  C {\bh_k^\ast}^{2+2 \gamma_{na}}, \mbox{ and }
    \int_{\mathcal{B}_{3/2}}\left|\boldsymbol{\eta} \circ \overline{N}_k^\ast\right| \leq  C {\bh_k^\ast}^2.\label{Eq:(7.3)}
\end{align}
Using the fact that $3 \bar{r}_k/2 \in\left(s_{j(k)}/t_{j(k)}, 3\right)$ and \eqref{E:NA-control-average} with $a=\bar{r}_k$, we infer as in \cite[Section 7.1]{DS5} that
\begin{equation*}
\begin{aligned}
    \left\|N^\ast_{j(k)}\right\|_{C^0\left(\mathcal{B}^\ast_{3 \bar{r}_k/2}\left(p^\ast_{j(k)}\right)\right)} \leq C\bm_{0,j(k)}^{1/2m} \bar{r}_k^{1+\gamma_h}, \ \operatorname{Lip}&\left(\left.N^\ast_{j(k)}\right|_{\mathcal{B}^\ast_{3\bar{r}_k/2}\left(p_{j(k)}\right)}\right) \leq C\bm_{0,j(k)}^{\gamma_2} \max _i \ell_i^{\gamma_2}, \\
    \mathbf{M}\left(\left(\mathbf{T}_{F^\ast_{j(k)}}-T_{j(k)}\right)\res {\bp^\ast_k}^{-1}\left(\mathcal{B}^\ast_{3\bar{r}_k/2}\left(p^\ast_{j(k)}\right)\right)\right) &\leq \sum_i\bm_{0,j(k)}^{1+\gamma_2} \ell_i^{m+2+\gamma_2}, \\
    \int_{\mathcal{B}^\ast_{3\bar{r}_k/2}\left(p^\ast_{j(k)}\right)}\left|\boldsymbol{\eta} \circ N^\ast_{j(k)}\right| \leq C {\bm_{0,j(k)}} \bar{r}_k \sum_i \ell_i^{2+m+\gamma_{la} / 2}&+\frac{C}{\bar{r}_k} \int_{\mathcal{B}^\ast_{3\bar{r}_k/2}\left(p^\ast_{j(k)}\right)}\left|N^\ast_{j(k)}\right|^2.
\end{aligned}
\end{equation*}
Arguing similarly to \cite[(4.12), (4.13), and (4.14)]{DS5} and using the Reverse Sobolev inequality proved in \cite[Corollary 5.3]{DS5}, we see that
\begin{equation}
\begin{aligned}\label{Eq:(7.4)}
    \sum_i {\bm_{0,j(k)}} \ell_i^{m+2+\frac{\gamma_2}{4}} & \leq C_0 \int_{\mathcal{B}^\ast_{3\bar{r}_k/2}\left(p^\ast_{j(k)}\right)}\left(\left|D N^\ast_{j(k)}\right|^2+\left|N^\ast_{j(k)}\right|^2\right)  \leq \frac{C_T}{\bar{r}_k^{2}} \int_{\mathcal{B}^\ast_{3\bar{r}_k/2}\left(p^\ast_{j(k)}\right)}\left|N^\ast _{j(k)}\right|^2,
\end{aligned}
\end{equation}
from which \eqref{Eq:(7.1)} and \eqref{Eq:(7.3)} follow by a simple rescaling. The constant $C_T$ on the right-hand side of \eqref{Eq:(7.4)} depends on $T$ but not on $k$. It is a consequence of these bounds and the Sobolev embedding (cf. \cite[Prop. 2.11]{DS1}) that the sequence $\{N_k^{\ast,b}\}_k$ weakly converges in $ W^{1,2}\left(B_{3/2},\cA_Q\left(\{0\}\times\R^{\bar{n}}\times\{0\}\right)\right)$ (in the sense of \cite[Definition 2.9]{DS1}) to a $Q$-valued function $N_{\infty}^{\ast,b}\in W^{1,2}\left(B_{3/2},\cA_Q\left(\R^{m+n}\right)\right)$. From this convergence and \eqref{Eq:(7.3)}, we derive that
\begin{equation*}
    \int_{B_{3/2}}\left|\boldsymbol{\eta} \circ N_{\infty}^{\ast,b}\right|=\lim _{k \rightarrow+\infty} \int_{B_{3/2}}\left|\boldsymbol{\eta} \circ N_k^{\ast,b}\right| \leq C \lim _{k \rightarrow+\infty} {\bh_k^\ast}=0 .    
\end{equation*}
We now check that $N_{\infty}^{\ast,b}$ must take its values in $\cA_Q\left(\{0\} \times \R^{\bar{n}} \times\{0\}\right)$. Consider the tangential part of ${\overline{N}_k^\ast}$ as in 
\begin{equation*}
    {\overline{N}_k^{\ast,T}}(x):=\sum_i \a{ \bp_{T_x \overline{\Sigma}_k}\left(\overline{N}_k^\ast)_i(x)\right) }.
\end{equation*}
One can verify that $\cG\left({\overline{N}_k^\ast}, {\overline{N}_k^{\ast,T}}\right)=\cG\left(Q\llbracket0\rrbracket, {\overline{N}_k^{\ast,\perp}}\right)\le C_0\left|{\overline{N}_k^\ast}\right|^2$ which leads to
\begin{equation*}
    \int_{B_{3 / 2}} \cG\left(N_k^{\ast,b}, {\bh_k^\ast}^{-1} {\overline{N}_k^{\ast,T}} \circ \mathbf{e}_k\right)^2  \leq C_0 {\bh_k^\ast}^{-2} \int_{\mathcal{B}^\ast_{3 / 2}}\left|{\overline{N}_k^\ast}\right|^4 
    \overset{\eqref{Eq:(7.1)}}{\leq} C\left({\bm_{0,j(k)}} \bar{r}_k\right)^{2 \gamma_{na}} \rightarrow 0 \mbox { as } k \rightarrow+\infty.
\end{equation*}
By the $C^{2,\alpha}$-convergence of $\overline{\Sigma}_k$ to $\R^{m+\bar{n}} \times\{0\}$, we conclude Step 1.

\emph{\underline{Step 2}: A suitable trivialization of the normal bundle through smoothing estimates and $W^{1,2}$-convergence.}

The difficulties we face in exploiting the arguments of \cite[Section 7.2]{DS5} are twofold: (1) the first is that we have to deal with an external center manifold $\overline{\cM}^\ast_k$ not lying inside $\overline{\Sigma}_k$, which introduce error terms that we have to account for, and (2) the fact that $\overline{\Sigma}_k$ is merely of class $C^{2,\alpha}$, which we deal with by approximating with $\overline{\Sigma}_{k,\varepsilon}$ of class $C^{\infty}$ obtained from $\Sigma$ first by rescaling by a factor $t_{j(k)}$, then mollifying it, and finally rescaling it again by a factor $\bar{r}_k$. Precisely, $\overline{\Sigma}_{k,\varepsilon}$ is obtained as follows. We introduce a parameter $\varepsilon>0$, mollify the functions $\mathbf{\Psi}_{j(k)}$ to get the functions $\mathbf{\Psi}_{k, \varepsilon}$ of class $C^{\infty}$, and then define
\begin{equation*}
    \Sigma_{k,\varepsilon}:=\operatorname{graph}(\mathbf{\Psi}_{j(k),\varepsilon})\mbox{ and }\overline{\Sigma}_{k,\varepsilon}:=\iota_{{\mathbf{\Psi}_{j(k),\varepsilon}(0)}, \bar{r}_k}\left(\Sigma_{j(k),\varepsilon}\right){+\mathbf{\Psi}_{j(k),\varepsilon}(0)}.
\end{equation*}

Furthermore, since $\cM^\ast_k$ is of class $C^{3,\kappa}$ the projection $\mathfrak{P}_k$ is only $C^{2,\kappa}$, we will also need to mollify $\cM^\ast_k$. Precisely, we define $\bPhi_{k,\varepsilon}\in C^{\infty}$ to be the mollified function obtained from $\bPhi_{j(k)}$,
\begin{equation*}
    \cM^\ast _{k,\varepsilon}:=\operatorname{graph}(\bPhi_{j(k),\varepsilon}), \ \overline{\cM}^\ast_{k,\varepsilon}:=\iota_{{\mathbf{\Psi}_{j(k),\varepsilon}(0)}, \bar{r}_k}\left(\cM^\ast _{k,\varepsilon}\right){+\mathbf{\Psi}_{j(k),\varepsilon}(0)}.
\end{equation*}

By construction, $\overline{\Sigma}_{k,\varepsilon}$ is approaching $\overline{\Sigma}_k$ in the $C^{2,\beta}$-topology for every $0<\beta<\alpha$ and $\overline{\cM}^\ast _{k,\varepsilon}\to\overline{\cM}^\ast_k$ in $C^{3,\beta_1}$ topology for every $0<\beta_1<\kappa$, when $\varepsilon\to 0^+$ for any fixed $k\in\N$. 

{As it is well known (see for instance \cite[Theorem 3.4]{AamariKimEtAl2019}) for every submanifold $M \hookrightarrow\R^{m+n}$ of class at least $C^{k,\alpha}$, we have a well-defined nearest point projection $\mathfrak{P}_{M}:U_s(M)\subseteq\R^{m+n}\to M$ of class $C^{k-1,\alpha}$ for every $s\in]0, s_0)$, where $s_0:=\frac{C(m,n)}{c(M)}$, $c(M)$ is the $C^0$-norm of the second fundamental form of $M$, and $U_s(M)$ is a tubular neighborhood of $M$ with radius $s$. }

{By construction of the approximating submanifolds and Assumption \ref{A:CM}, there exist $\mathbb{N}\ni k_0=k_0(m,n, \varepsilon_{cm})$, $0<\varepsilon_0=\varepsilon_0(m,n, \varepsilon_{cm})$ such that
$$\frac{C(m,n)c(\Sigma)}{2}\le c(\Sigma_{k,\varepsilon})\le2C(m,n)c(\Sigma),\:\forall k\ge k_0,\forall \varepsilon\in (0, \varepsilon_0).$$}

{Moreover, $\overline{\Sigma}_k$ and $\overline{\cM}^\ast_k$ are even flatter than $\Sigma$ and $\cM^\ast$, i.e., $0\le c(\Sigma_k)\le c(\Sigma)$ and $0\le c(\cM^\ast_k)\le c(\cM^\ast)$. As in Remark \ref{A:nearest pt proj}, this ensures that, if $k\in\N$ is larger and $\varepsilon>0$ is smaller than a geometric constant, we obtain $\mathfrak{P}_{k,\varepsilon}$ is a map of rank equal to the dimension of $\Sigma$ in a fixed neighborhood of the origin (e.g., $\mathbf{B}_{6\sqrt{m}}$), where we define $\mathfrak{P}_{k,\varepsilon}:U_{s_0}(\Sigma_{k,\varepsilon})\to\Sigma_{k,\varepsilon}$ to be the nearest point projection onto $\Sigma_{k,\varepsilon}$.}

{Notice also that $D_p\mathfrak{P}_{k,\varepsilon}(v+(T_{\mathfrak{P}_{k,\varepsilon}(p)}\overline{\Sigma}_{k,\varepsilon})^\perp)=0$ for $v:=\mathfrak{P}_{k,\varepsilon}(p)-p$. In fact, this follows from the fact that $\mathfrak{P}_{k,\varepsilon}$ is a diffeomorphism when restricted to the image of the normal exponential map of $\overline{\Sigma}_{k,\varepsilon}$. Furthermore, by convergence in $C^{1,\alpha}$ topology of the relevant tangent bundles, we know that, for every sequence $\varepsilon_k\downarrow0$ and for every $p\in\overline{\cM}^\ast_{k,\varepsilon_k}\cap\oball{6\sqrt{m}}$, we have that
$$|T_p\overline{\cM}^\ast_{k,\varepsilon_k}-T_{\mathfrak{P}_{k,\varepsilon_k}(p)}\mathfrak{P}_{k,\varepsilon_k}(\overline{\cM}^\ast_{k,\varepsilon_k})|\to0,\: k\to+\infty.$$
Hence, it can be taken arbitrarily small provided $k$ is large and $\varepsilon$ is small and so in particular smaller than $c_0$ of Lemma \ref{L:linear algebra}. }

{Denote $\widetilde{\cM}_{k,\varepsilon}:=\mathfrak{P}_{k,\varepsilon}\left(\overline{\cM}^\ast _{k,\varepsilon}\right)\subset\overline{\Sigma}_{k,\varepsilon}$ and notice that $T\widetilde{\cM}_{k,\varepsilon}$ is a sub-vector bundle of $T\overline{\Sigma}_{k,\varepsilon}$. Now, applying Lemma \ref{L:linear algebra} with $V=T_p\overline{\cM}^\ast_{k,\varepsilon_k}$ and $W=T_{\mathfrak{P}_{k,\varepsilon_k}(p)}\widetilde{\cM}_{k,\varepsilon}$, we obtain that $\mathfrak{P}_{k,\varepsilon}$ is a diffeomorphism from $\overline{\cM}_{k,\varepsilon}$ onto $\widetilde{\cM}_{k,\varepsilon}$ and we can construct an orthonormal frame which we denote by $\{\nu_1^{k,\varepsilon},\ldots, \nu_{\bar{n}}^{k,\varepsilon}, \varpi_1^{k,\varepsilon}, \ldots, \varpi_l^{k,\varepsilon}\}$ of $(T \overline{\cM}^\ast _{k,\varepsilon})^{\perp}$ satisfying
\begin{equation*}
\begin{aligned}
   \nu_1^{k,\varepsilon}(p), \ldots, \nu_{\bar{n}}^{k,\varepsilon}(p), \varpi_1^{k,\varepsilon}(p), \ldots, \varpi_l^{k,\varepsilon}(p)\mbox{ is orthonormal  at each point }p\in\overline{\cM}^\ast _{k,\varepsilon},\\
   \nu_j^{k,\varepsilon}(p) \in T_{\mathfrak{P}_{k,\varepsilon}(p)}\overline{\Sigma}_{k,\varepsilon}, \mbox{ and }\varpi_j^{k,\varepsilon}(p) \perp T_{\mathfrak{P}_{k,\varepsilon}(p)} \overline{\Sigma}_{k,\varepsilon}.
\end{aligned}
\end{equation*}}

{Arguing exactly as in \cite[Lemma 6.1]{DS5} with the pair $(\overline{\Sigma}_{k,\varepsilon}, \widetilde{\cM}_{k,\varepsilon})$, we get, for every fixed $\varepsilon>0$, that 
\begin{equation}\label{Eq:BeforeMollifyThenRescale1}
\tilde{\nu}_j^{k,\varepsilon} \rightarrow e_{m+j}^\varepsilon \quad \text { and } \quad \varpi_j^{k,\varepsilon} \rightarrow e_{m+\bar{n}+j}^\varepsilon \quad \text { in } C^{2, \kappa / 2}\left(\widetilde{\cM}_{k,\varepsilon}\right) \text { as } k\to\infty,
\end{equation}
where $\tilde{\nu}_j^{k,\varepsilon}=D\mathfrak{P}_{k,\varepsilon}(\nu_j^{k,\varepsilon})$ and $e_1^\varepsilon, \ldots, e_{m+\bar{n}+l}^\varepsilon$ is a orthonormal basis of $V_\varepsilon\oplus V_\varepsilon^\perp\equiv\R^{m+n}$ where $V_\varepsilon$ is the vector space of dimension $m+\bar{n}$ which is the limit, with respect to $k$, of the sequence $\overline{\Sigma}_{k,\varepsilon}$. Since $e_1^\varepsilon, \ldots, e_{m+\bar{n}+l}^\varepsilon$ is a constant orthonormal basis converging to the canonical base of $\R^{m+n}$ as $\varepsilon\to0$, we can deduce by a standard diagonalization process that there exists a sequence $\varepsilon_k\downarrow0$ such that 
\begin{equation}\label{Eq:BeforeMollifyThenRescale2}
\tilde{\nu}_j^{k,\varepsilon_k} \rightarrow e_{m+j} \quad \text { and } \quad \varpi_j^{k,\varepsilon_k} \rightarrow e_{m+\bar{n}+j} \quad \text { in } C^{2, \kappa / 2}\left(\widetilde{\cM}_{k,\varepsilon_k}\right) \text { as } k\to\infty.
\end{equation}}
{Finally, we obtain conclude
\begin{equation}\label{Eq:BeforeMollifyThenRescale}
\nu_j^{k,\varepsilon_k} \rightarrow e_{m+j} \quad \text { and } \quad \varpi_j^{k,\varepsilon_k} \rightarrow e_{m+\bar{n}+j} \quad \text { in } C^{2, \kappa / 2}\left(\overline{\cM}^\ast _{k,\varepsilon_k}\right) \text { as } k\to\infty,\footnote{In fact, they converge in $C^\infty$ topology.}
\end{equation}
where $e_1, \ldots, e_{m+\bar{n}+l}$ is the standard basis of $\R^{m+\bar{n}+l}=\R^{m+n}$. Henceforth, we will always consider sequences $\varepsilon_k$ satisfying \eqref{Eq:BeforeMollifyThenRescale}. Therefore, to avoid cumbersome notation, we will drop the $k$ from $\varepsilon_k$ in what follows.}

Now that \eqref{Eq:BeforeMollifyThenRescale} is established, it is easy to check that we can find $\delta>0$ (independent of $k$ and $\varepsilon$) such that, for $k\ge k_0(\delta)\in\mathbb{N}$ large enough and $0<\varepsilon< \varepsilon_0(\delta, k)$ small enough, there is a map $\psi^\ast_{k,\varepsilon}: \overline{\cM}^\ast _{k,\varepsilon} \times \R^{\bar{n}} \rightarrow \R^l$ converging to $0$ in $C^{2, \kappa / 2}$ (uniformly bounded in $C^{2, \beta_1}$ with $\frac\kappa2<\beta_1<\kappa\le\alpha$) given by the following property:
\begin{equation*}
    \mathfrak{P}_{{k,\varepsilon}}(p)+v \in \overline{\Sigma}_{k,\varepsilon}\Longleftrightarrow v^{\perp}=\psi^\ast_{k,\varepsilon}\left(p, v^T\right), \ \forall v \in \left(T_p \overline{\cM}^\ast _{k,\varepsilon}\right)^\perp\mbox{ with }|v| \leq \delta.
\end{equation*}
where $v^T=\left(\left\langle v, \nu_1^{k,\varepsilon}\right\rangle, \ldots,\left\langle v, \nu_{\bar{n}}^{k,\varepsilon}\right\rangle\right) \in \R^{\bar{n}}$ and $v^{\perp}=\left(\left\langle v, \varpi_1^{k,\varepsilon}\right\rangle, \ldots,\left\langle v, \varpi_l^{k,\varepsilon}\right\rangle\right) \in \R^l$.  Observe $\mathfrak{P}_{k,\varepsilon}$ are well defined for every $0<\varepsilon\le\varepsilon_0(k,\delta)$ due to the $C^2$-convergence of the mollified manifolds $\Sigma_{k,\varepsilon}$. Now, to see that $\psi_{k,\varepsilon}^\ast \to0$, in $C^{2,\kappa/2}$-topology, consider the map
\begin{equation}\label{Eq:ImplicitFunctionTheorem}
    \mathfrak{I}_{k,\varepsilon}: \overline{\cM}^\ast_k \times \R^{\bar{n}} \times \R^l \ni(p, z, w) \mapsto p+z^j \nu_j^{k,\varepsilon}+w^j \varpi_j^{k,\varepsilon}\in \R^{m+n},
\end{equation}
where we use the Einstein convention of summation over repeated indices. It is simple to show that the frame can be chosen so that $D \mathfrak{I}_{k,\varepsilon}(0,0)=\mathrm{Id}$ and, hence, using the implicit function theorem, $\mathfrak{I}_{k,\varepsilon}^{-1}\left(\overline{\Sigma}_{k,\varepsilon}\right)$ can be written locally as a graph of a function $\psi^\ast_{k,\varepsilon}$ that meets the claimed property. By construction, we also have $\psi^\ast_{k,\varepsilon}(x, 0)=0$, (which yields $D_x \psi^\ast_{k,\varepsilon}(x, 0)=0$) and $\left|D_z \psi^\ast_{k,\varepsilon}(x, 0)\right|=0$ for every $x \in \overline{\cM}^\ast _{k,\varepsilon}$, which in turn implies
\begin{equation}\label{Eq:C2AlphaEstimates(7.5)1}
    \left|D_x \psi^\ast_{k,\varepsilon}(x, z)\right| \leq C|z|^{1+\beta_1}, \ \quad\left|D_z\psi^\ast_{k,\varepsilon}(x, z)\right| \leq C|z|, \mbox{ and } \left|\psi^\ast_{k,\varepsilon}(x, z)\right| \leq C|z|^{2},
\end{equation}
where $C>0$ is a constant that can be chosen independently of $\varepsilon$ and $k$. Indeed, it suffices to remember that $\overline{\Sigma}_{k,\varepsilon}\to\overline{\Sigma}_k$ in the $C^{2,\beta}$-topology and also that $\overline{\Sigma}_k$ converges to $\R^{m+\bar{n}}\times\{0\}$ to deduce the existence of the constant $C>0$ independent of of $\varepsilon$ and $k$ such that $\|\psi^\ast_{k,\varepsilon}\|_{C^{2,\beta_1}}\le C$. We define $\psi^\ast_k$ analogously substituting in \eqref{Eq:ImplicitFunctionTheorem} $\overline{\Sigma}_{k,\varepsilon}$ by $\overline{\Sigma}_k$ and $\overline{\cM}^\ast _{k,\varepsilon}$ by $\overline{\cM}^\ast_k$. Notice that $\psi^\ast_k$ is of class $C^{1,\kappa}$ and that the convergence of $\psi^\ast_{k,\varepsilon}$ to $\psi^\ast_k$ is in the $C^{1,\kappa/2}$-topology, which is enough for the approximation procedure that we will perform later. 

Now given any $Q$-valued map $u_{k,\varepsilon}=\sum_i \a{ u_{k,\varepsilon,i}}: \overline{\cM}^\ast _{k,\varepsilon} \rightarrow\cA_Q\left(\{0\}\times\R^{\bar{n}} \times\{0\}\right)$ with $\|u_{k,\varepsilon}\|_{L^{\infty}} \leq \delta$, we can consider the map $\mathbf{u}_{k,\varepsilon}$ from $\overline{\cM}^\ast _{k,\varepsilon}$ to $\cA_Q\left(\nu(\overline{\cM}^\ast _{k,\varepsilon})\right)$, where $\nu(\overline{\cM}^\ast _{k,\varepsilon})$ is the total space of the normal bundle of $\overline{\cM}^\ast _{k,\varepsilon}$, defined by
\begin{equation*}
    \mathbf{u}_{k,\varepsilon}(x) :=  \sum_i \a{\sum_{j=1}^{\bar{n}}\left(u_{k,\varepsilon,i}\right)^j(x) \nu_j^{k,\varepsilon}(x)+\sum_{j=1}^{l}\psi^{\ast,j}_{k,\varepsilon}\left(x, u_{k,\varepsilon,i}(x)\right) \varpi_j^{k,\varepsilon}(x)},    
\end{equation*}
where we set $\left(u_{k,\varepsilon,i}\right)^j(x):=\left\langle u_{k,\varepsilon,i}(x), e_{m+j}\right\rangle, \psi^{\ast,j}_{k,\varepsilon}\left(x, u_{k,\varepsilon,i}(x)\right):=\left\langle\psi^\ast_{k,\varepsilon}\left(x, u_{k,\varepsilon,i}(x)\right), e_{m+\bar{n}+j}\right\rangle$. Then, we get that
\begin{equation*}
\begin{aligned}
    D\left(\mathbf{u}_{k,\varepsilon}\right)_i=D\left(u_i\right)^j \nu_j^{k,\varepsilon} &+ \left[D_x \psi^{\ast,j}_{k,\varepsilon}\left(x, u_i\right)+D_z \psi^{\ast,j}_{k,\varepsilon}\left(x, u_i\right) D u_i\right] \varpi_j^{k,\varepsilon}\\
    &+\left(u_i\right)^j D \nu_j^{k,\varepsilon}+\psi^{\ast,j}_{k,\varepsilon}\left(x, u_i\right) D \varpi_j^{k,\varepsilon},\text{ a.e. } \cH^m\res\overline{\cM}^\ast_{k,\varepsilon},    
\end{aligned}
\end{equation*}
where we used the Einstein summation convention on the index $j$. We define $\mathbf{u}_k$ analogously and also derive the analogous of the last displayed inequality for $\mathbf{u}_k$. Taking into account that 
\begin{equation}
    \lim_{k\to +\infty}\left\|D \nu_i^{k}\right\|_{C^0}+\left\|D \varpi_j^{k}\right\|_{C^0} = 0\mbox{ and }\lim_{\varepsilon\to 0}\left\|D \nu_i^{k,\varepsilon}-D\nu_i^k\right\|_{C^0}+\left\|D \varpi_j^{k,\varepsilon}-D \varpi_j^{k}\right\|_{C^0},
\end{equation}
we obtain the existence of a double sequence $(\varepsilon_0(k,\delta))_{k\in\mathbb{N}, \delta\in (0,+\infty)}$, generating a sequence $0<\varepsilon_{0,k} =\varepsilon_0(k,\delta_k) >0 $ such that $\varepsilon_{0,k}\to0^+$ and $\delta_k\to0^+$ as $k\to+\infty$. Furthermore, we have the following
\begin{equation*}
    \forall \{\varepsilon_k\}_{k\in\N}\mbox{ with }0<\varepsilon_k\le\varepsilon_{0,k},\forall k\in\N, \mbox{ we have } \lim_{k\to +\infty}\left\|D \nu_i^{k,\varepsilon_k}\right\|_{C^0}+\left\|D \varpi_j^{k,\varepsilon_k}\right\|_{C^0} = 0.
\end{equation*}
By \eqref{Eq:C2AlphaEstimates(7.5)1}, we readily derive that, for some constant $C>0$ independent of $k$ and $k$ large enough, it holds
\begin{equation*}
    \left|\int\left(\left|D \mathbf{u}_{k,\varepsilon_k}\right|^2-|D u|^2\right)\right| \leq C \int |u|^{2+2 \beta_1} + |u|^2|Du|^2  +o(1) |u|^2.
\end{equation*}
On the other hand, since $\psi^\ast_{k,\varepsilon}\to\psi^\ast_k$ in $C^{1,\kappa/2}$-topology as $\varepsilon\to0$ for any fixed $k$ large enough, we have
\begin{equation*}
    \lim_{\varepsilon\to 0}\left|\int\left(\left|D \mathbf{u}_{k,\varepsilon}\right|^2-|D\mathbf{u}_k|^2\right)\right|=0.
\end{equation*}
We now choose $\varepsilon_{k,\delta}>0$, depending on $\delta>0$ and $k\in\mathbb{N}$ large enough, to get from the last two displayed inequalities that 
\begin{equation}\label{Eq:7.6}
\begin{aligned}
    \left|\int\left(\left|D \mathbf{u}_k\right|^2-|D u|^2\right)\right| & \leq \left|\int\left(\left|D \mathbf{u}_{k,\varepsilon_{k,\delta}}\right|^2-|D u|^2\right)\right|+\left|\int\left(\left|D \mathbf{u}_{k,\varepsilon_{k,\delta}}\right|^2-|D\mathbf{u}_k|^2\right)\right|\\ 
    & \leq C \int |u|^{2+2 \beta_1} + |u|^2|Du|^2  +o(1) |u|^2.
\end{aligned}
\end{equation}
By \eqref{E:T:NA:LipC0}, we can write  ${\overline{N}_k^\ast}(x)=Q\a{\mathfrak{P}_{k}(x)} \oplus \psi^\ast_k\left(x, \bar{u}_k(x)\right)$ for some Lipschitz $\bar{u}_k:=\sum_i \a{\bar{u}_{k,i}}: \overline{\cM}^\ast_k \rightarrow$ $\cA_Q\left(\{0\}\times\R^{\bar{n}}\times\{0\}\right)$ with $\left\|\bar{u}_k\right\|_{L^{\infty}}=o(1)$. Setting 
\begin{equation*}
    u_k^b(x):=\sum_i \a{\mathfrak{P}_{k}(\mathbf{e}_k(x))+\bar{u}_{k,i}(\mathbf{e}_k(x))},
\end{equation*}
we conclude, from the Reverse Sobolev inequality (\cite[(5.11)]{DS5}), \eqref{E:T:NA:LipC0}, and \eqref{Eq:7.6}, that
\begin{equation}
\label{Eq:(7.7)TheLimitFunctionBelongsToW12}
    \lim _{k \rightarrow+\infty} \int_{B_{3/2}}\left(\left|D N_k^{\ast,b}\right|^2-\bh_k^{-2}\left|D u_k^b\right|^2\right)=0,
\end{equation}
and $N_{\infty}^{\ast,b}$ is the limit of $\bh_k^{-1} u_k^b$ in $ W^{1,2}(B_{3/2}, \cA_Q(\{0\}\times \R^{\bar{n}} \times \{0\}))$. 

\emph{\underline{Step 3}: Proof of the $\operatorname{Dir}$-minimizing property of $N_{\infty}^{\ast b}$ ((i) of \cref{T:FinalBU})}

There is nothing to prove if its Dirichlet energy vanishes. Therefore, we assume existence of $c_0>0$ such that $0<c_0 {\bh_k^\ast}^2 \leq \int_{\mathcal{B}^\ast_{3/2}}\left|D {\overline{N}_k^\ast}\right|^2$. We will argue by contradiction, if $(i)$ of \cref{T:FinalBU} were to be false, we argue follow the same argument in \cite[Section 7.3]{DS5} to find $r\in (t,2)$ and $v_k^{\ast,b}$ with the following property. If we define $\widetilde{N}_{k,\varepsilon}^\ast = \psi^\ast_{k,\varepsilon}\left(x, v_k^b \circ \mathbf{e}_{k,\varepsilon}^{-1}\right)$, then we have
\begin{equation*}
\begin{aligned}
    & \widetilde{N}_{k,\varepsilon}^\ast \equiv \overline{N}^\ast_{k,\varepsilon} \quad \text { on } \mathcal{B}_{3/2} \backslash \mathcal{B}_t, \quad \operatorname{Lip}\left(\widetilde{N}_{k,\varepsilon}^\ast\right) \leq C {\bh_{k,\varepsilon}^\ast}^{\gamma_{na}}, \quad\left|\widetilde{N}_{k,\varepsilon}^\ast\right| \leq C\left({\bm_{0,k}^\ast} \bar{r}_k\right)^{\gamma_{na}}\\
    & \int_{\mathcal{B}_{\frac{3}{2}}}\left|\boldsymbol{\eta} \circ \widetilde{N}_{k,\varepsilon}^\ast\right| \leq C {\bh_{k,\varepsilon}^\ast}^2 \quad \text { and } \quad \int_{\mathcal{B}_{3/2}}\left|D \widetilde{N}_{k,\varepsilon}^\ast\right|^2 \leq \int_{\mathcal{B}_{3/2}}\left|D {\overline{N}_k^\ast}\right|^2-\delta {\bh_{k,\varepsilon}^\ast}^2 .
\end{aligned} 
\end{equation*}
We finally construct the competitor current that will violate the minimality of $T$. Set $\widetilde{F}^\ast_{k,\varepsilon}(x)=\sum_i \a{ x+\widetilde{N}_{k,\varepsilon,i}^\ast(x) }$. The currents $\mathbf{T}_{\widetilde{F}^\ast_{k,\varepsilon}}$ coincides with $\mathbf{T}_{\bar{F}^\ast_{k,\varepsilon}}$ in ${(\bp^\ast_{k,\varepsilon})}^{-1}\left(\mathcal{B}_{3/2} \backslash \mathcal{B}_t\right)$ and both of them lie in $\overline{\Sigma}_{k,\varepsilon}$. Define the function $\varphi^\ast_{k,\varepsilon}(p)=\operatorname{dist}_{\overline{\cM}^\ast _{k,\varepsilon}}\left(0, \bp^\ast_{k,\varepsilon}(p)\right)$, and, for each $s \in (t, \frac{3}{2})$, consider the slices $\left\langle\mathbf{T}_{\widetilde{F}_{k,\varepsilon}}-\bar{T}_{k,\varepsilon}, \varphi_{k,\varepsilon}, s\right\rangle$, where $\bar{T}_{k,\varepsilon}:=\left(\mathbf{\mathcal{T}}_{k,\varepsilon}\right)_{\sharp}\bar{T}_k$ and $\mathbf{\mathcal{T}}_{k,\varepsilon}:\overline{\Sigma}_k\to\overline{\Sigma}_{k,\varepsilon}$ are the natural $C^{2,\beta}$ diffeomorphisms induced by the mollification process which by definition converge to $\mathbf{1}_{\overline{\Sigma}_k}$ in $C^{2,\beta}$ topology. By \eqref{Eq:(7.1)}, we have
\begin{equation*}
    \int_t^{\frac{3}{2}} \mathbf{M}\left(\left\langle\mathbf{T}_{\widetilde{F}^\ast_{k,\varepsilon}}-\bar{T}_{k,\varepsilon}, \varphi^\ast_{k,\varepsilon}, s\right\rangle\right) \leq C {\bh_{k,\varepsilon}^\ast}^{2+\gamma_{na}} .    
\end{equation*}
Thus, for each $k\in\mathbb{N}$ and $\varepsilon>0$, we can find a radius $\sigma_{k,\varepsilon} \in ( t, \frac{3}{2} )$ for which 
\begin{equation*}
     \mathbf{M}\left(\left\langle\mathbf{T}_{\widetilde{F}^\ast_{k,\varepsilon}}-\bar{T}_{k,\varepsilon}, \varphi^\ast_{k,\varepsilon}, \sigma _{k,\varepsilon}\right\rangle\right) \leq C {\bh_{k,\varepsilon}^\ast}^{2+\gamma_{na}} .    
\end{equation*}
By the isoperimetric inequality (see \cite[Rem. 4.3]{DS3}), there is a current $S_{k,\varepsilon}$ such that
\begin{equation*}
    \partial S_{k,\varepsilon}=\left\langle\mathbf{T}_{\widetilde{F}^\ast_{k,\varepsilon}}-\bar{T}_{k,\varepsilon}, \varphi^\ast_{k,\varepsilon}, \sigma_{k,\varepsilon}\right\rangle, \quad \mathbf{M}\left(S_{k,\varepsilon}\right) \leq C {\bh^\ast_{k,\varepsilon}}^{(2+\gamma) m /(m-1)} \quad \text { and } \operatorname{spt}\left(S_{k,\varepsilon}\right) \subset \overline{\Sigma}_{k,\varepsilon}.
\end{equation*}
Our competitor current is then given by
\begin{equation*}
    \widetilde{T}_{k,\varepsilon}:=\bar{T}_{k,\varepsilon}\res\left((\bp_{k,\varepsilon}^\ast)^{-1}\left(\overline{\cM}^\ast _{k,\varepsilon} \backslash \mathcal{B}_{\sigma_{k,\varepsilon}}\right)\right)+S_{k,\varepsilon}+\mathbf{T}_{\widetilde{F}^\ast_{k,\varepsilon}}\res\left((\bp_{k,\varepsilon}^\ast)^{-1}\left(\mathcal{B}_{\sigma_{k,\varepsilon}}\right)\right) .    
\end{equation*}
Note that $\widetilde{T}_{k,\varepsilon}$ is supported in $\overline{\Sigma}_{k,\varepsilon}$. On the other hand, by \eqref{Eq:(7.1)} and the bound on $\mathbf{M}\left(S_{k,\varepsilon}\right)$, we have
\begin{equation*}
    \mathbf{M}\left(\widetilde{T}_{k,\varepsilon}\right)-\mathbf{M}\left(\bar{T}_{k,\varepsilon}\right) \leq \mathbf{M}\left(\mathbf{T}_{\overline{F}^\ast_{k,\varepsilon}}\right)-\mathbf{M}\left(\mathbf{T}_{\widetilde{F}^\ast_{k,\varepsilon}}\right)+C \left(\bh^\ast_{k,\varepsilon}\right)^{2+2 \gamma_{na}} .    
\end{equation*}
Denote by $\overline{A}^\ast _{k,\varepsilon}$ and $\overline{H}^\ast _{k,\varepsilon}$ the second fundamental forms and mean curvatures of the manifold $\overline{\cM}^\ast _{k,\varepsilon}$, respectively. Using the above inequality and the Taylor expansion of \cite[Th. 3.2]{DS2}, for every $0<\varepsilon\le\varepsilon_{0,k}$, we achieve
\begin{equation}\label{Eq:(7.10)}
\begin{aligned}
    \mathbf{M}\left(\widetilde{T}_{k,\varepsilon}\right)-\mathbf{M}\left(\bar{T}_{k,\varepsilon}\right)\leq & \frac{1}{2} \int\left(\left|D \widetilde{N}_{k,\varepsilon}^\ast\right|^2-\left|D \overline{N}^\ast_{k,\varepsilon}\right|^2\right) \\
    &\quad + C\left\|\overline{H}^\ast _{k,\varepsilon}\right\|_{C^0} \int\left(\left|\boldsymbol{\eta} \circ \overline{N}^\ast_{k,\varepsilon}\right|+\left|\boldsymbol{\eta} \circ \widetilde{N}_{k,\varepsilon}^\ast\right|\right) \\
    &\quad + \left\|\overline{A}^\ast _{k,\varepsilon}\right\|_{C^0}^2 \int\left(\left|\overline{N}^\ast_{k,\varepsilon}\right|^2+\left|\widetilde{N}_{k,\varepsilon}^\ast\right|^2\right)+o\left({\mathbf{h}^\ast_{k,\varepsilon}}^2\right)\\ 
    &\leq -\frac{\delta}{2} {\mathbf{h}^\ast_{k,\varepsilon}}^2+o\left({\mathbf{h}^\ast_{k,\varepsilon}}^2\right),
\end{aligned}
\end{equation}
where in the last inequality we take into account \cite[Lemma 6.1]{DS5}, which can be used here in this form because, by construction, our $\overline{\cM}^\ast_{k,\varepsilon}$ center manifolds are all of class $C^\infty$ and so in particular of class $C^{3,\beta_1}$ having also $C^2$ norm uniformly bounded with respect to $k$ and $\varepsilon$. 

To finish the proof, we have to take the limit on $\varepsilon$ in \eqref{Eq:(7.10)}. We argue as follows. For every one parameter family pair of abstract pointed Riemannian manifolds $\left(\left(\Sigma_\varepsilon, g_\varepsilon, p_\varepsilon\right),\left( \Sigma^\prime_\varepsilon, g^\prime_\varepsilon, p^\prime_\varepsilon\right)\right)_\varepsilon$ whose underlying differentiable structure is of class $C^1$ and with metric tensors $g_\varepsilon,g^\prime_\varepsilon$ of class $C^0$ converging in topology $C^0$ to the pair $\left(\left(\Sigma_0, g_0, p_0\right),\left( \Sigma^\prime_0, g^\prime_0, p^\prime_0\right)\right)$, and every family of $C^1$ diffeomorphisms $\mathfrak{D}_\varepsilon\in C^1\left(\Sigma_\varepsilon,\Sigma^\prime_\varepsilon\right)$ and $\mathfrak{D}_\varepsilon^{-1}\in C^1\left(\Sigma^\prime_\varepsilon,\Sigma_\varepsilon\right)$, it is easy to check that $\left(\mathfrak{D}_\varepsilon\right)_{\sharp}(\bI_m(\Sigma_\varepsilon,g_\varepsilon))=\bI_m(\Sigma^\prime_\varepsilon,g^\prime_\varepsilon)$ (here $\bI_m(\Sigma_\varepsilon,g_\varepsilon)$ denotes all integral $m$-currents in $(\Sigma_\varepsilon,g_\varepsilon)$). Moreover, if we assume that there exists an isometry of pointed metric spaces $\mathbf{\mathfrak{f}}:(\Sigma_0, g_0, p_0)\to(\Sigma^\prime_0,g^\prime_0,p^\prime_0)$ with $\mathbf{\mathfrak{f}}\in C^1\left(\Sigma,\Sigma^\prime\right)$ and $\mathbf{\mathfrak{f}}^{-1}\in C^1\left(\Sigma^\prime,\Sigma\right)$ satisfying $\mathfrak{D}_\varepsilon\to\mathbf{\mathfrak{f}}$ in $C^1$ topology, $T_\varepsilon\in\bI_m(\Sigma_\varepsilon,g_\varepsilon)$, $T_\varepsilon\to T_0\in \bI_m(\Sigma_0, g_0)$ in the intrinsic flat topology, then it holds
\begin{equation}\label{Eq:MassConvergenceOfMollifiedCurrents}
    \mathbf{M}_{(\Sigma^\prime_\varepsilon,g^\prime_\varepsilon)}\left(\left(\mathfrak{D}_\varepsilon\right)_{\sharp}T_\varepsilon\right)\to\mathbf{M}_{(\Sigma^\prime_0,g^\prime_0)}\left(\mathbf{\mathfrak{f}}_{\sharp}T_0\right)=\mathbf{M}_{(\Sigma_0,g_0)}\left(T_0\right), \mbox{ when }\varepsilon\to 0^+.   
\end{equation}
Applying \eqref{Eq:MassConvergenceOfMollifiedCurrents} to our setting immediately provides the following
\begin{equation*}
    \mathbf{M}\left(\widetilde{T}_k\right)-\mathbf{M}\left(\bar{T}_k\right)= \lim_{\varepsilon\to0^+}\mathbf{M}\left(\widetilde{T}_{k,\varepsilon}\right)-\mathbf{M}\left(\bar{T}_{k,\varepsilon}\right),
\end{equation*}
and, by Dominated convergence theorem, we have that ${\mathbf{h}^\ast_{k,\varepsilon}}\to{\bh_k^\ast}$ as $\varepsilon\to 0^+$ for each fixed $k$ large enough. This together with \eqref{Eq:(7.10)} contradicts the minimizing property of $\bar{T}_k$ for $k$ large enough and $\varepsilon_{0,k}$ small enough. Hence, we conclude the proof.
\end{proof}

            \bibliographystyle{abbrv}
            \addcontentsline{toc}{section}{References} 
            \bibliography{main}
\end{document}